\colorlet{color1}{blue}
\colorlet{color2}{red!50!black}
\numberwithin{theorem}{section}
\numberwithin{equation}{section}
\numberwithin{lemma}{section}
\numberwithin{corollary}{section}
\newtheorem{assumption}{Assumption}
\newcommand{\R}{\mathbb{R}}
\newcommand{\Rn}{\mathbb{R}^n}
\newcommand{\Rex}{(-\infty,+\infty]}
\newcommand{\N}{\mathbb{N}}
\newcommand{\Spp}{\mathbb{S}^n_{++}}
\newcommand{\dom}{\mathrm{dom}}
\newcommand{\vp}{\varphi}
\newcommand{\veps}{\varepsilon}
\newcommand{\cH}{\mathcal{H}}
\newcommand{\cS}{{\mathcal{S}}}
\newcommand{\proxt}[2]{{\mathrm{prox}}^{#1}_{#2}}
\newcommand{\envt}[2]{{\mathrm{env}}^{#1}_{#2}}
\newcommand{\Exp}{\mathbb{E}} 
\newcommand{\Prob}{\mathbb{P}}
\newcommand{\vhalf}{v_{k+{1\!\!\;/\!\!\;2}}}
\newcommand{\aev}{$\text{a.e. }$}
\newcommand{\as}{$\text{a.s. }$}
\newcommand{\half}{\frac{1}{2}}
\newcommand{\iprod}[2]{\langle #1, #2 \rangle}
\DeclareMathOperator*{\argmin}{arg\,min}
\newcommand{\be}{\begin{equation}}
\newcommand{\ee}{\end{equation}}
\newcommand{\bee}{\begin{equation*}}
\newcommand{\eee}{\end{equation*}}
\newcommand{\bea}{\begin{eqnarray}}
\newcommand{\eea}{\end{eqnarray}}
\newcommand{\beaa}{\begin{eqnarray*}}
\newcommand{\eeaa}{\end{eqnarray*}}
\definecolor{ivory}{RGB}{218,215,203}
\newcommand{\celliv}[1]{\cellcolor{ivory} {#1}}
\newcolumntype{C}[1]{>{\centering\arraybackslash}p{#1}}
\newcolumntype{M}{@{\hspace{.6\tabcolsep}}>{\columncolor{ivory!45}[.8pt][.4\tabcolsep]}C{1.2cm}@{\hspace{.8\tabcolsep}}}
\newcolumntype{R}{@{\hspace{.6\tabcolsep}}>{\columncolor{ivory!45}[.8pt][.4\tabcolsep]}C{2.0cm}@{\hspace{.8\tabcolsep}}}
\begin{document}
\title{A Stochastic Extra-Step Quasi-Newton Method for Nonsmooth Nonconvex Optimization
\thanks{Z$.$ Wen is supported in part by the NSFC grants 11831002 and 11421101. A$.$ Milzarek is partly supported by the Fundamental Research Fund -- Shenzhen Research Institute for Big Data (SRIBD) Startup Fund {JCYJ-AM20190601}.}}

\titlerunning{A Stochastic Extra-Step Quasi-Newton Method for Nonsmooth Problems}        

\author{Minghan Yang \and Andre Milzarek \and Zaiwen Wen \and Tong Zhang} 

\institute{Minghan Yang \at
              Beijing International Center for Mathematical Research, BICMR, Peking University, Beijing, CHINA \\
              \email{yangminghan@pku.edu.cn} \and
              Andre Milzarek \at
              Institute for Data and Decision Analytics, The Chinese University of Hong Kong, Shenzhen and Shenzhen Institute for Artificial Intelligence and Robotics for Society, AIRS, Shenzhen, CHINA \\
              \email{andremilzarek@cuhk.edu.cn} \and
              Zaiwen Wen \at
              Beijing International Center for Mathematical Research, BICMR, Peking University, Beijing, CHINA \\
              \email{wenzw@pku.edu.cn} \and
              Tong Zhang \at
              Hong Kong University of Science and Technology, Hong Kong, CHINA \\
              \email{tongzhang@tongzhang-ml.org} 
}

\date{Received: date / Accepted: date}

\maketitle


\begin{abstract}
In this paper, a novel stochastic extra-step quasi-Newton method is developed to solve a class of nonsmooth nonconvex composite optimization problems. We assume that the gradient of the smooth part of the objective function can only be approximated by stochastic oracles. The proposed method combines general stochastic higher order steps derived from an underlying proximal type fixed-point equation with additional stochastic proximal gradient steps to guarantee convergence. Based on suitable bounds on the step sizes, we establish global convergence to stationary points in expectation and an extension of the approach using variance reduction techniques is discussed. Motivated by large-scale and big data applications, we investigate a stochastic coordinate-type quasi-Newton scheme that allows to generate cheap and tractable stochastic higher order directions. Finally, the proposed algorithm is tested on large-scale logistic regression and deep learning problems and it is shown that it compares favorably with other state-of-the-art methods.
\keywords{nonsmooth stochastic optimization, stochastic approximation, global convergence, stochastic higher order method, stochastic quasi-Newton scheme}
\subclass{90C06 \and 90C15 \and 90C26 \and 90C53}
\end{abstract}


\section{Introduction} In this paper, we investigate a two-step, stochastic higher order method for composite-type optimization problems of the form:
\be \label{eq:prob} \min_{x \in \Rn}~\psi(x) := f(x) + \vp(x),\ee
where $f : \Rn \to \R$ is a continuously differentiable but not necessarily convex function and $\vp : \Rn \to \Rex$ is a convex, lower semicontinuous, and proper mapping. We assume that a full evaluation of the function $f$ and its gradient $\nabla f$ is either too expensive or not  possible and that approximate gradient information can only be accessed via calling \textit{stochastic oracles}. 
Indeed, such a situation occurs frequently in large-scale and big data applications where the function $f$ corresponds to a data term or statistical loss model that is parametrized by the vector $x \in \Rn$. 
%
For instance, $f$ can be chosen as an expected risk mapping:
\be \label{eq:exrm} f(x) := \Exp[F(x,\Xi)]=\int_{\Omega}F(x,\Xi(\omega))\,{\rm d}\Prob(\omega), \ee
where $(\Omega,\mathcal F,\Prob)$ is a given probability space, $\Xi : \Omega \to \Upsilon$ is a random variable, $\Upsilon$ is a measure space, and $F: \Rn \times \Upsilon \to \R$ is a model function. 
Since the distribution $\Prob$ is often not known and only partial information in form of finitely many data samples is available, the following empirical risk formulation is typically considered in practice:
%
\be \label{eq:erm} f(x):= \frac1N \sum_{i=1}^N f_i(x), \quad f_i : \Rn \to \R, \quad i = 1,...,N. \ee
%
The function $\vp$ is an additional regularization term that is introduced to appropriately handle the risk of overfitting or to promote a certain structure of the variable $x$, such as (group-) sparsity or low rank properties. 

Composite-type problems of the form \eqref{eq:prob} are common in machine learning \cite{Bis06,shalev2014understanding,BotCurNoc18}, statistical learning \cite{friedman2001elements,vapnik2013nature}, sparse logistic regression \cite{REF08a,shi2010fast,SST2011}, and image or signal processing \cite{ComPes11}. Examples for challenging large-scale and nonconvex applications that fit within our proposed framework comprise matrix decomposition \cite{ChaSanParWil09,CanRec09}, deep learning \cite{MasBaxBarFre99,DenYu14,simonyan2014very,lecun2015deep,Sch15,he2016deep}, and structured dictionary learning \cite{mairal2009online,BacJenMaiObo11}.

\subsection{Contribution} In this work, we develop a stochastic extra-step quasi-Newton method for the general composite problem \eqref{eq:prob}. Our basic idea is to utilize stochastic higher order Newton-type steps that are designed to solve the nonsmooth, nonlinear equation
\be \label{eq:nonlinear}F^\Lambda(x) := x - \proxt{\Lambda}{\vp}(x-\Lambda^{-1}\nabla f(x)) = 0, \quad \Lambda \in \Spp, \ee
which represents the associated first order optimality conditions of \eqref{eq:prob}. In order to guarantee global convergence of the proposed approach, we introduce and perform an additional stochastic proximal gradient step. For a generated stochastic process of iterates $({\sf X}^k)_k$, we can then establish the following convergence results
\be \label{eq:intro-conv} \Exp[ \| F^I({\sf X}^k) \|^2] \to 0   \quad \text{and} \quad F^I({\sf X}^k) \to 0 \quad \text{almost surely}\quad   k \to \infty. \ee
We now summarize our different contributions.
\begin{itemize}[leftmargin = 4 ex]
\item We propose a general stochastic extra-step scheme for nonsmooth and nonconvex optimization problems of the form \eqref{eq:prob} that allows to  incorporate stochastic higher order information in a natural and simple way. 
\item The introduced extra-step ensures that the sequence of function values $(\psi({\sf X}^k))_{k}$ is approximately decreasing without requiring expensive line search procedures or checking additional conditions. Specifically, global convergence is achieved if the step sizes and stochastic errors are correctly balanced and satisfy certain summability conditions. We present different step size strategies that put emphasis either on the efficient usage of stochastic higher order steps and information or on weaker assumptions on the variance of the stochastic approximations.
\item  A large variety of stochastic approximation methods, such as basic sub-sampling schemes or more elegant variance reduction techniques, \cite{JohZha13,DefBacSim14,XiaZha14,RedHefSraPocSmo16,nguyen2017sarah}, can be applied within our abstract framework. To demonstrate this versatility, we investigate a variance reduced (SVRG-type) version of our approach for empirical risk problems. We show that an $\veps$-accurate stationary point with $\Exp[\|F^I({\sf X})\|^2] \leq \veps$ can be recovered within $\mathcal O(N^{2/3}/\veps)$ iterations which is the same iteration complexity as Prox-SVRG, \cite{AllHaz16,RedHefSraPocSmo16}.
\item We propose and analyze a stochastic coordinate-based quasi-Newton strategy to generate higher order-type directions for large-scale problems. Numerical experiments on $\ell_1$-regularized logistic regression and sparse deep learning problems illustrate the efficiency and promising performance of our two-step method and the high potential of stochastic higher order information in large-scale settings. 
\end{itemize}


\subsection{Related Work} 
In this section, we briefly review related work and research directions on large-scale nonsmooth and nonconvex optimization. 

The stochastic gradient descent method (SGD) originates from the seminal work \cite{RobMon51} by Robbins and Monro and uses single sample or mini-batch stochastic oracles to approximate gradient information at each iteration. The SGD method is a highly popular and flexible approach and many extensions, such as variance reduction techniques and acceleration schemes, have been proposed to further enhance its practical performance and theoretical properties, see, e.g., \cite{JohZha13,SSZ2013,DefBacSim14,nguyen2017sarah,LMH2015,All17-Kat,SchLeRBac17}. 
For nonsmooth problems, i.e., if $\vp \neq 0$, the proximal operator of $\vp$ can be utilized to develop stochastic proximal gradient methods, \cite{DefBacSim14,XiaZha14,SSZ2016,pham2019proxsarah}, that have similar convergence properties as SGD.
The convergence analysis of SGD and its corresponding variance reduced versions can also be extended to the class of nonconvex nonsmooth problems considered in this work, see, e.g., \cite{GhaLan13,GhaLan16,AllHaz16,RedHefSraPocSmo16,RedSraPocSmo16,lei2017non,fang2018spider,wang2018spiderboost,zhou2018stochastic,nguyen2019finite,pham2019proxsarah}. 

Most of the existing stochastic second order algorithms are designed for smooth and convex finite-sum problems and are based on various sub-sampling strategies to build cheap oracles for approximating the gradient and Hessian of $f$, see, e.g., \cite{byrd2011use,ErdMon15,xu2016sub,BolByrNoc18,RooMah18}. In \cite{XuRooMah19}, Xu et al. analyze a trust-region framework for nonconvex problems with inexact or stochastic Hessian information. The Newton sketch method by Pilanci and Wainwright, \cite{pilanci2017newton}, uses randomly projected Hessians as oracles for smooth convex problems with decomposable Hessians and a comparison of Newton sketch and sub-sampled Newton methods is presented in \cite{BerBolNoc17}. Stochastic quasi-Newton methods are another class of stochastic higher order algorithms. Typically, these methods combine different stochastic oracles for the gradient and (L)-BFGS-type mechanisms to construct tractable stochastic quasi-Newton updates for both convex and nonconvex problems. 
In \cite{schraudolph2007stochastic}, Schraudolph et al. develop an online version of the BFGS method. In  \cite{ByrHanNocSin16}, Byrd et al. propose a stochastic L-BFGS algorithm that utilizes sub-sampled Hessian information to build the BFGS-type updates. A linearly convergent stochastic L-BFGS method with variance reduction is discussed in \cite{moritz2016linearly}. Zhao et al., \cite{zhao2017stochastic}, use an adaptive sampling technique to further improve the iteration complexity. Wang et al., \cite{WanMaGolLiu17}, consider a stochastic L-BFGS  approach for nonconvex problems and introduce a Powell-damping scheme to guarantee positive definiteness of the updates. In \cite{gower2016stochastic}, Gower et al. propose a stochastic block L-BFGS method that incorporates enriched curvature information. Moreover, Mokhtari et al., \cite{mokhtari2018iqn}, analyze an incremental BFGS method with local superlinear rate of convergence. For nonsmooth problems, deterministic proximal Newton-type methods are studied in \cite{LeeSunSau14,hsieh2014quic,yuan2012improved} and stochastic versions have been considered in \cite{ShiLiu15,WanWanYua18,WanZha19}. At each iteration, these methods need to solve a nonsmooth quadratic subproblem to obtain a new direction.
%
Semismooth Newton-type algorithms are another well-known class of second order methods that can be applied to solve the problem \eqref{eq:prob} or the nonsmooth equation \eqref{eq:nonlinear}, \cite{QiSun93,Qi93,PanQi93,SteThePat17,XiaLiWenZha18}. In contrast to proximal Newton methods, a semismooth Newton step is generated more directly and only involves the (approximate) solution of a linear system of equations. In \cite{MilXiaCenWenUlb18}, a stochastic variant of the semismooth Newton method is developed and convergence is established in expectation and almost surely. We refer to section \ref{sec:algorithm} and \ref{section:direction} for further discussions and more information.

Deep learning, \cite{simonyan2014very,he2016deep,lecun2015deep}, is a highly successful but computationally demanding machine learning methodology and it can take days to train a model to reach a desired level of accuracy. Although stochastic first order approaches \cite{kingma2014adam,duchi2011adaptive,sutskever2013importance,goyal2017accurate,you2018imagenet,AkiSuzFuk17} are the dominating methods of choice, stochastic second order schemes have recently gained more attention \cite{BotRitBar17,osawa2018second,martens2015optimizing,grosse2016kronecker,martens2010deep,BerNocTak16} -- especially in large mini-batch settings -- and show their own advantages. 

\subsection{Organization}
This paper is organized as follows. In section 2, we present the abstract algorithmic framework and introduce several preliminaries. In section 3, global convergence is investigated for different step size strategies. Moreover, an extended version of the algorithm using variance reduction is proposed and discussed. In section 4, we introduce a stochastic coordinate-type quasi-Newton method and other higher order-type schemes. Finally, in section 5 and 6, numerical experiments are performed to demonstrate the efficiency of the extra-step method.


\subsection{Notation}

By $\iprod{\cdot}{\cdot}$ and $\|\cdot\| := \|\cdot\|_2$ we denote the standard Euclidean inner product and norm. For matrices, the norm $\|\cdot\|$ is the standard spectral norm. The set of symmetric and positive definite $n \times n$ matrices is denoted by $\Spp$. For a given matrix $\Lambda \in \Spp$, we define the inner product $\iprod{x}{y}_\Lambda := \iprod{x}{\Lambda y} = \iprod{\Lambda x}{y}$ and $\|x\|_\Lambda := \sqrt{\iprod{x}{x}_\Lambda}$.
For any $n \in \N$, we set $[n] := \{1,...,n\}$ and $[n]_0 = \{0\} \cup [n]$. Let $(\Omega,\mathcal F,\Prob)$ be a probability space. We will use uppercase letters and a sans-serif letterform to describe random variables ${\sf X} : \Omega \to \Rn$, while lowercase letters or letters with serifs are typically reserved for realizations of a random variable, $x = {\sf X}(\omega)$, or deterministic parameters. We use $L^p(\Omega) := L^p(\Omega,\Prob)$, $p \in [1,\infty]$, to denote the standard $L^p$ space on $\Omega$. We write ${\sf X} \in \mathcal F$ for ``${\sf X}$ is $\mathcal F$-measurable''. Moreover, $\sigma({\sf X}^1,...,{\sf X}^k)$ denotes the $\sigma$-algebra generated by the family of random variables ${\sf X}^1,...,{\sf X}^k$. For a random variable ${\sf X} \in L^1(\Omega)$ and a sub-$\sigma$-algebra $\cH \subseteq \mathcal F$, the conditional expectation of ${\sf X}$ given $\cH$ is denoted by $\Exp[{\sf X} \mid \cH]$. We use the abbreviations ``a.e.'' and ``a.s.'' for ``almost everywhere'' and ``almost surely'', respectively.

\section{A Stochastic Extra-Step Quasi-Newton Method} We first discuss the underlying first order optimality conditions of the composite-type problem \eqref{eq:prob}. In subsection \ref{sec:algorithm}, we then present the proposed approach in detail.


\subsection{Preliminaries and First Order Optimality} It is well-known that the stationarity conditions of problem \eqref{eq:prob} can be equivalently rewritten as a proximal fixed-point equation, i.e., a point $x \in \dom~\vp$ is a critical point of \eqref{eq:prob} if and only if
\be \label{eq:opt} F^\Lambda(x) := x - \proxt{\Lambda}{\vp}(x-\Lambda^{-1}\nabla f(x)) = 0, \quad \Lambda \in \Spp, \ee
where $\proxt{\Lambda}{\vp} : \Rn \to \Rn$, $\proxt{\Lambda}{\vp}(x) := \argmin_y \vp(y) + \frac{1}{2}\|x-y\|^2_\Lambda$ denotes the proximity operator of the mapping $\vp$. The proximity operator is a $\Lambda$-firmly nonexpansive function, i.e., it is globally Lipschitz continuous and satisfies
\be \label{eq:prox-nonexp} \|\proxt{\Lambda}{\vp}(x) - \proxt{\Lambda}{\vp}(y)\|_\Lambda^2 \leq \iprod{\Lambda(x-y)}{\proxt{\Lambda}{\vp}(x) - \proxt{\Lambda}{\vp}(y)}, \quad \forall~x,y. \ee
The proximity operator $\proxt{\Lambda}{\vp}$ can also be alternatively characterized via the underlying optimality condition
\be \label{eq:prox-opt} \proxt{\Lambda}{\vp}(x) \in x - \Lambda^{-1} \partial \vp(\proxt{\Lambda}{\vp}(x)), \ee
where $\partial \vp$ is the classical subdifferential of the convex function $\vp$. The so-called Moreau envelope of $\vp$ is given by $\envt{\Lambda}{\vp} : \Rn \to \R$, $\envt{\Lambda}{\vp}(x) := \min_y \vp(y) + \half \|x-y\|_\Lambda^2$. The mapping $\envt{\Lambda}{\vp}$ is real-valued, continuously differentiable, and convex, and its gradient satisfies $\nabla \envt{\Lambda}{\vp}(x) = \Lambda(x - \proxt{\Lambda}{\vp}(x))$. Let us refer to \cite{Mor65,ComWaj05,BacJenMaiObo11,BauCom11,ParBoy14} for further details and background on the proximity operator and Moreau envelope.
%
%
%
%
%

Given a generic (stochastic) oracle represented by a direction $v \in \Rn$, we consider inexact variants of the nonsmooth residual \eqref{eq:opt} of the type
\begin{equation}
\label{sto_app_optimal}  F^\Lambda_v(x) := x - \proxt{\Lambda}{\vp}(x- \Lambda^{-1} v). 
\end{equation}
We will also use $u^\Lambda_v(x) := x - \Lambda^{-1} v$ and $p^\Lambda_v := \proxt{\Lambda}{\vp}(u^\Lambda_v(x))$ to denote the inexact (proximal) gradient steps. In the case $v \equiv \nabla f(x)$, the terms $u^\Lambda$ and $p^\Lambda$ are used to denote the exact or full gradient and proximal gradient step.


\subsection{Algorithmic Framework} \label{sec:algorithm} Our algorithmic idea is to utilize stochastic higher order information and to generate stochastic second order-type directions $d $ based on the optimality condition \eqref{eq:opt} and its approximate variant \eqref{sto_app_optimal}. Specifically, we are interested in directions of the form
\be \label{dirup} d = -W F^\Lambda_v(x), \ee
where the matrix $W \in \R^{n \times n}$ is chosen to refine and improve the basic stochastic proximal gradient direction $-F^\Lambda_v(x)$ and $v \approx \nabla f(x)$ is a corresponding stochastic approximation of the gradient of $f$. In each iteration, we first calculate a new trial point via $z = x + \beta d$ and then perform an additional proximal gradient step to obtain the next iterate $x_+$: 
\be \label{eq:algo-scheme} \left[ \begin{array}{rl} z & \hspace{-.25ex} = x + \beta d, \\
x_{+} & \hspace{-.25ex}= \proxt{\Lambda_+}{\vp}(x + \alpha d - \Lambda_+^{-1}v_+), \end{array} \right. \ee
where $\alpha, \beta \geq 0$, and $\Lambda, \Lambda_+ \in \Spp$ are suitable step size parameters and $v_+ \in \Rn$ is a stochastic approximation of the trial gradient value $\nabla f(z)$. The full method is presented in Algorithm \ref{alg:seqn}.

\LinesNumberedHidden
\begin{algorithm2e}[t]
\caption{\textbf{S}tochastic \textbf{E}xtra-Step \textbf{Q}uasi-\textbf{N}ewton Method}    
\label{alg:seqn}
\lnlset{alg:seqn-1}{1}{Initialization: ~~Select the parameter matrices $(\Lambda_k)_k, (\Lambda_{k,+})_k \in \Spp$. Choose the step sizes $(\alpha_k)_k$, $(\beta_k)_k$ and the initial point  $x^0 \in \dom~\vp$ .} \\ 
\For{$k = 0,1,... $}{
\lnlset{alg:seqn-2}{2}{Based on $v^k \approx \nabla f(x^k)$, compute a direction $d^k$ and set $z^k = x^k + \beta_k d^k$.} \\
\lnlset{alg:seqn-3}{3}{Calculate the new oracle $v^k_+ \approx \nabla f(z^k)$ and perform the update \\
\begin{center} $x^{k+1} = \proxt{\Lambda_{k,+}}{\vp}(x^k + \alpha_k d^k - \Lambda_{k,+}^{-1}v^k_+)$ \vspace{-1ex}.\end{center}}  
} 
\end{algorithm2e}

The scheme \eqref{eq:algo-scheme} is a general stochastic two-step framework that supports a large variety of stochastic oracles and mechanisms to construct the approximations $v$, $v_+$. In particular, in section \ref{section:extra-vr}, we discuss a more sophisticated version of our algorithm using the variance reduction technique proposed in \cite{JohZha13,XiaZha14,RedHefSraPocSmo16}. The two step sizes $\alpha_k$ and $\beta_k$ in Algorithm \ref{alg:seqn} can be selected in a very flexible way (e.g., it is possible to set $\alpha_k = \beta_k = 1$ for all $k$). Hence, several known stochastic algorithms can be regarded and treated as special cases of our method: 
\begin{itemize}[leftmargin=4ex]
\item In the case $\alpha_k = \beta_k = 0$, $W_k = I$, the approach reduces to the standard stochastic proximal gradient descent method or to related variants (depending on the stochastic approximation schemes).
\item If we set $\alpha_k = 0$ and $W_k = I$, our algorithm coincides with a stochastic version of the extragradient method studied by Nguyen et al. in \cite{NguPauRicSut18}, see also \cite{LuoTse93}.
\end{itemize}

There is a vast plethora of possible choices for the matrices $W$ and $W_k$ in \eqref{dirup}. In this work, we will mainly focus on stochastic L-BFGS-type and coordinate-based Quasi-Newton updates which provide low cost and (block) sparse approximations of the generalized (or Fr\'echet) derivative of $F^\Lambda$. Other strategies and more details are presented in section \ref{section:direction}. 
We note that the direction $d$ is not restricted to the format \eqref{dirup}. In the next section, we will verify that more general directions can be used as long as they are appropriately related to the stochastic proximal gradient direction $F^\Lambda_v(x)$. 

Following \cite[Theorem 4.5]{PatSteBem14} and \cite[Section 4]{SteThePat17}, we now briefly demonstrate and motivate the potential higher order nature of Algorithm \ref{alg:seqn}. Let us suppose that the mapping $\nabla f$ is Lipschitz continuous with constant $L_f > 0$ and let $x^*$ be a stationary point of problem \eqref{eq:prob}. Then, choosing $\Lambda_+ = \lambda_+^{-1} I \succ 0$ and using \eqref{eq:prox-nonexp}, we obtain
\begin{align*} \|x_+ - x^*\| &= \|\proxt{\Lambda_+}{\vp}(x+\alpha d - \Lambda_+^{-1}v_+) - \proxt{\Lambda_+}{\vp}(x^* - \Lambda_+^{-1}\nabla f(x^*))\| \\ & \leq \|x+\alpha d - x^*\| + L_f \lambda_+^{-1} \|x+\beta d - x^*\| + \lambda_+^{-1} \|v_+ - \nabla f(z)\|.
\end{align*}
Based on this estimate and under certain regularity conditions (such as, e.g., \textit{semismoothness} of the residual $F^\Lambda$) and appropriate choices of $d$ and $\alpha, \beta$, the extra-step scheme \eqref{eq:algo-scheme} can yield fast local convergence with high probability if the stochastic errors are suitably controlled and decrease sufficiently fast throughout the iterative process. For recent and related discussions of local convergence results for stochastic higher order methods for smooth problems we refer to \cite{ErdMon15,Mut16,AgaBulHaz17,KohLuc17,YeLuoZha17,BolByrNoc18,RooMah18}. We will investigate the latter observations and study the local behavior of Algorithm \ref{alg:seqn} in detail in a companion paper. 

The additional proximal gradient step in \eqref{eq:algo-scheme} was initially introduced by Patrinos et al. in \cite{PatSteBem14} (and later extended in \cite{SteThePat17}) to improve the global convergence properties and convergence rates of a family of nonsmooth Newton-type methods for a merit function formulation of problem \eqref{eq:prob} based on the so-called \textit{forward-backward envelope}. In this work, we extend and generalize this strategy to the inexact and stochastic setting when the full function and gradient values $f(x)$ and $\nabla f(x)$ are not available or intractable to compute. In contrast to \cite{PatSteBem14,SteThePat17,TheStePat18}, we do not need to perform line-search on the forward-backward envelope (which depends on the full objective function $\psi$) to ensure global convergence. Instead, we will show that the simple extra-step scheme \eqref{eq:algo-scheme} achieves martingale-type descent and converges in the sense of \eqref{eq:intro-conv}, if the two step size matrices $\Lambda$ and $\Lambda_+$ are chosen in a specific way and, as already mentioned, the direction $d$ is related to the stochastic residual $F^\Lambda_v(x)$. 

Compared to stochastic proximal Newton-type methods, \cite{ShiLiu15,RodKro16,WanWanYua18,WanZha19}, our approach generates stochastic higher order-type steps more directly. In particular, we do not need to solve potentially expensive subproblems (to obtain the proximal Newton direction) in each iteration. Let us also mention that the stochastic second order method studied by Milzarek et al. in \cite{MilXiaCenWenUlb18} considers similar stochastic semismooth Newton steps of the form $x_+ = x - W F^\Lambda_v(x)$. However, the authors apply a more intricate globalization mechanism based on growth conditions and, in general, evaluation of the full objective function $\psi$ is required to ensure global convergence.

\section{Global Convergence Analysis}


\subsection{Assumptions, First Properties, and Stochastic Tools}
Throughout this paper, we assume that $f : \Rn \to \R$ is continuously differentiable on $\Rn$ and $\vp : \Rn \to \Rex$ is convex, lower semicontinuous, and proper function. 
In the following, we further specify the assumptions on the mappings $f$ and $\vp$. 

\begin{assumption} \label{ass:lip} Let $f : \Rn \to \R$ be given. We assume: 
\setlength{\leftmargini}{7ex}
\begin{itemize}
\item[{\rm(A.1)}] The gradient mapping $\nabla f$ is Lipschitzian on $\Rn$ with modulus $L_f \geq 1$. 
\item[{\rm(A.2)}] The objective function $\psi$ is bounded from below on $\dom~\vp$. \vspace{.5ex}
\end{itemize}
\end{assumption}

%
%
%
%
%
%


We now verify that the oracle-based extra-step scheme can yield approximate $\psi$-descent whenever the parameter matrices $\Lambda$ and $\Lambda_+$ are chosen appropriately. 
Our result can be seen as an inexact or oracle-based generalization of the basic properties of the forward-backward envelope presented in \cite{PatSteBem14,SteThePat17}. Let us note that similar results for proximal gradient steps were also shown in \cite{XiaZha14,XuYin15,GhaLanZha16,RedSraPocSmo16} and that the proof of Lemma \ref{lemma:prox-descent} mainly relies on the well-known descent lemma
\be \label{eq:lip-ineq} f(y) \leq f(x) + \iprod{\nabla f(x)}{y-x} + \frac{L_f}{2}  \|y-x\|^2, \quad \forall~x,y \in \Rn, \ee  
which is a direct consequence of assumption {\rm(A.1)}.

\begin{lemma} \label{lemma:prox-descent} Let $x \in \dom~\vp$, $d \in \Rn$, $\Lambda = \lambda^{-1} I $, $\Lambda_+ = \lambda_+^{-1} I$, $\alpha, \beta, \rho > 0$, and the approximations $v, v_+ \in \Rn$ be given. Under assumption {\rm(A.1)} it holds that
\begin{align*} 2[\psi(p_+) - \psi(x)] & \\ & \hspace{-13.5ex} \leq \frac{1}{\rho} \|\nabla f(x) - v\|^2 +  {\lambda_+} \|\nabla f(x+\beta d) - v_+\|^2 +  \left[ \frac{1}{\lambda_+} - \frac{1}{\lambda} \right]  \|F^\Lambda_v(x)\|^2 \\ÃÂ & \hspace{-11.5ex} + \left[ L_f - \frac{1}{\lambda_+}\right] \|p_+ - x\|^2  +  \left[ \rho - \frac{1}{\lambda} \right]  \|p^\Lambda_v(x) - p^\Lambda(x)\|^2 + {\ell(\lambda_+,\alpha,\beta)} \|d\|^2   \\ & \hspace{-11.5ex} - \frac{1}{\lambda} \|F^\Lambda(x)\|^2 +  2\iprod{\nabla f(x+\beta d) - v_+}{\lambda_+(\nabla f(x) - \nabla f(x+\beta d)) +\alpha d}, \end{align*}
where $p_+ := \proxt{\Lambda_+}{\vp}(x+\alpha d - \lambda_+v_+)$ and $\ell(\lambda_+,\alpha,\beta) := \lambda_+(\frac{\alpha}{\lambda_+} + L_f\beta)^2$. 
\end{lemma}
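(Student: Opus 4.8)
The plan is to expand $\psi(p_+) = f(p_+) + \vp(p_+)$ and bound each piece. First I would apply the descent lemma \eqref{eq:lip-ineq} with $x \leftarrow x$, $y \leftarrow p_+$ to get $f(p_+) \le f(x) + \iprod{\nabla f(x)}{p_+ - x} + \frac{L_f}{2}\|p_+-x\|^2$, which already accounts for the $[L_f - \tfrac1{\lambda_+}]\|p_+-x\|^2$ term (the $-\tfrac1{\lambda_+}$ part must come from elsewhere). For the $\vp(p_+)$ term I would use the optimality characterization \eqref{eq:prox-opt} of the proximity operator: since $p_+ = \proxt{\Lambda_+}{\vp}(x+\alpha d - \lambda_+ v_+)$, there is a subgradient $\xi \in \partial\vp(p_+)$ with $\xi = \frac{1}{\lambda_+}(x + \alpha d - \lambda_+ v_+ - p_+)$. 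Convexity of $\vp$ then gives $\vp(p_+) \le \vp(x) + \iprod{\xi}{p_+ - x}$, equivalently $\vp(p_+) \le \vp(x) - \iprod{v_+}{p_+-x} + \frac{1}{\lambda_+}\iprod{x + \alpha d - p_+}{p_+-x}$. Adding these two bounds yields
\begin{align*}
\psi(p_+) - \psi(x) &\le \iprod{\nabla f(x) - v_+}{p_+-x} + \frac{1}{\lambda_+}\iprod{x+\alpha d - p_+}{p_+-x} + \frac{L_f}{2}\|p_+-x\|^2.
\end{align*}
The middle inner product is the key algebraic term: writing $x + \alpha d - p_+ = \alpha d + (x - p_+)$ and using $\iprod{x-p_+}{p_+-x} = -\|p_+-x\|^2$ produces the $-\tfrac1{\lambda_+}\|p_+-x\|^2$ contribution and a cross term $\frac{\alpha}{\lambda_+}\iprod{d}{p_+-x}$.

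Next I would handle the residual terms. The point is to relate $p_+ - x$ back to $F^\Lambda(x)$, $F^\Lambda_v(x)$, and the correction $\alpha d$. I would introduce the intermediate quantities $p^\Lambda(x) = \proxt{\Lambda}{\vp}(x - \lambda\nabla f(x))$ and $p^\Lambda_v(x) = \proxt{\Lambda}{\vp}(x - \lambda v)$, so that $F^\Lambda(x) = x - p^\Lambda(x)$ and $F^\Lambda_v(x) = x - p^\Lambda_v(x)$, and rewrite things so the telescoping identity $\|F^\Lambda(x)\|^2 = \|F^\Lambda_v(x)\|^2 + 2\iprod{F^\Lambda(x) - F^\Lambda_v(x)}{F^\Lambda_v(x)} + \|p^\Lambda(x) - p^\Lambda_v(x)\|^2$ can be used; the firm-nonexpansiveness \eqref{eq:prox-nonexp} applied to the pair $(x-\lambda\nabla f(x),\, x - \lambda v)$ controls $\|p^\Lambda(x) - p^\Lambda_v(x)\|^2$ by $\lambda\iprod{\nabla f(x)-v}{p^\Lambda(x)-p^\Lambda_v(x)}$, and then Young's inequality with parameter $\rho$ splits this into the $\frac1\rho\|\nabla f(x)-v\|^2$ and $[\rho - \tfrac1\lambda]\|p^\Lambda_v(x)-p^\Lambda(x)\|^2$ pieces. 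The gap between $p_+$ and $p^\Lambda_v(x)$ (which differs by the perturbation $\alpha d$ in the argument and by using $v_+$ versus $v$, plus the different step $\lambda_+$ versus $\lambda$) is where the $[\tfrac1{\lambda_+}-\tfrac1\lambda]\|F^\Lambda_v(x)\|^2$, the $\ell(\lambda_+,\alpha,\beta)\|d\|^2$, and the $\lambda_+\|\nabla f(x+\beta d) - v_+\|^2$ terms together with the final explicit inner product $2\iprod{\nabla f(x+\beta d)-v_+}{\lambda_+(\nabla f(x)-\nabla f(x+\beta d)) + \alpha d}$ will be generated — essentially by completing squares around $p_+ - x$ using the expansion $x + \alpha d - \lambda_+ v_+ = (x - \lambda_+\nabla f(x)) + (\lambda_+(\nabla f(x) - \nabla f(x+\beta d)) + \alpha d) + \lambda_+(\nabla f(x+\beta d) - v_+)$, and noting $z = x + \beta d$ so $\nabla f(z) = \nabla f(x+\beta d)$.

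The main obstacle is the careful bookkeeping in the second step: there are several perturbations stacked on top of each other (the $\alpha d$ shift, the two different inexact gradients $v$ and $v_+$, and the mismatch between $\lambda$ and $\lambda_+$), and one must route each of them into exactly the right error term without losing the needed negative coefficients $-\tfrac1\lambda\|F^\Lambda(x)\|^2$ and $[L_f-\tfrac1{\lambda_+}]\|p_+-x\|^2$. Concretely, I expect the delicate part to be the decomposition of $\|p_+ - x\|^2$ — or rather of the inner product $\frac{\alpha}{\lambda_+}\iprod{d}{p_+-x}$ and the nonexpansiveness-type estimate linking $p_+$ to $p^\Lambda_v(x)$ — in a way that the cross terms between $d$ and the gradient errors collapse precisely into $\ell(\lambda_+,\alpha,\beta)\|d\|^2 = \lambda_+(\tfrac{\alpha}{\lambda_+} + L_f\beta)^2\|d\|^2$. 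Here the factor $L_f\beta$ enters because $\|\nabla f(x) - \nabla f(x+\beta d)\| \le L_f\beta\|d\|$ by (A.1). Once the right grouping is found, each remaining bound is a routine application of Young's inequality (with the free parameter $\rho$) and \eqref{eq:prox-nonexp}, and multiplying through by $2$ gives the stated inequality. Note that (A.2) is not needed for this lemma; only (A.1) is invoked, via \eqref{eq:lip-ineq} and the Lipschitz bound on $\nabla f$.
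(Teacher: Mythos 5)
Your first step is fine: the descent lemma for $f$ plus the subgradient inequality for $\vp$ at $p_+$ (which, after substituting $\xi = \lambda_+^{-1}(x+\alpha d - \lambda_+ v_+ - p_+)$, is exactly the three-point prox inequality with reference point $z = x$) gives $\psi(p_+)-\psi(x) \le \iprod{\alpha\lambda_+^{-1}d - v_+ + \nabla f(x)}{p_+-x} + [\tfrac{L_f}{2}-\tfrac{1}{\lambda_+}]\|p_+-x\|^2$. The genuine gap is in everything after that. The target inequality contains the strictly negative term $-\tfrac{1}{\lambda}\|F^\Lambda(x)\|^2$ (and the signed terms $[\tfrac{1}{\lambda_+}-\tfrac{1}{\lambda}]\|F^\Lambda_v(x)\|^2$ and $[\rho-\tfrac{1}{\lambda}]\|p^\Lambda_v(x)-p^\Lambda(x)\|^2$), and none of the mechanisms you list can produce them from your intermediate bound: the norm identity $\|F^\Lambda\|^2 = \|F^\Lambda_v\|^2 + \dots$ relates residuals, not function values; and your firm-nonexpansiveness-plus-Young step only establishes that $\tfrac{1}{2\rho}\|\nabla f(x)-v\|^2 + [\tfrac{\rho}{2}-\tfrac{1}{\lambda}]\|p^\Lambda_v(x)-p^\Lambda(x)\|^2 \ge 0$, i.e.\ it adds a nonnegative quantity, which can never create a negative $\|F^\Lambda(x)\|^2$ contribution. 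Worse, if you split $p_+-x = (p_+-p^\Lambda_v(x))+(p^\Lambda_v(x)-p^\Lambda(x))+(p^\Lambda(x)-x)$ in your inner product, you generate the cross term $\iprod{\nabla f(x)-v}{x-p^\Lambda(x)}$, which has no counterpart in the target and cannot be absorbed without eating into the coefficient of $\|F^\Lambda(x)\|^2$ that the convergence analysis needs.

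The missing idea is that the comparison point in the prox inequality must not be $x$. The paper telescopes $\vp(p_+)-\vp(x) = [\vp(p_+)-\vp(p^\Lambda_v(x))] + [\vp(p^\Lambda_v(x))-\vp(p^\Lambda(x))] + [\vp(p^\Lambda(x))-\vp(x)]$ and applies the three-point inequality $\vp(p)-\vp(z) \le \iprod{\Gamma u - w}{p-z} + \tfrac12\|y-z\|_\Gamma^2 - \tfrac12\|p-y\|_\Gamma^2 - \tfrac12\|p-z\|_\Gamma^2$ three times, to the prox points $p_+$, $p^\Lambda_v(x)$, $p^\Lambda(x)$ with reference points $z = p^\Lambda_v(x)$, $p^\Lambda(x)$, $x$ respectively. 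It is precisely the evaluation of $\vp$ at the intermediate points that produces $-\tfrac{1}{2\lambda}\|F^\Lambda(x)\|^2$ (from $-\tfrac12\|p-y\|^2_\Gamma-\tfrac12\|p-z\|^2_\Gamma$ with $y=z=x$), the term $+\tfrac{1}{2\lambda_+}\|F^\Lambda_v(x)\|^2$ (from $\tfrac12\|y-z\|_{\Lambda_+}^2$ with $y=x$, $z=p^\Lambda_v(x)$), and pairs each gradient error with the correct displacement ($\nabla f(x)-v$ with $p^\Lambda_v(x)-p^\Lambda(x)$ only, and $\nabla f(x)+\lambda_+^{-1}\alpha d-v_+$ with $p_+-p^\Lambda_v(x)$ only), so that the two Young steps and the final add-and-subtract of $\nabla f(x+\beta d)$ close the argument. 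This choice of reference points is the central idea of the proof, not routine bookkeeping, and your outline does not contain it. (Your side remark that only (A.1) is used is correct.)
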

\begin{proof} First, setting $p = \proxt{\Gamma}{\vp}(y+u-\Gamma^{-1}w)$ and applying the optimality condition of the proximity operator \eqref{eq:prox-opt}, we have
\begin{align*} \vp(p) - \vp(z) \leq \iprod{\Gamma u - w}{p-z} + \half \|y-z\|^2_\Gamma - \half \|p-y\|^2_\Gamma - \half \|p-z\|_\Gamma^2 \end{align*}
%
for all $y,u,w \in \Rn$, $z \in \dom~\vp$, and $\Gamma \in \Spp$. Utilizing this estimate three times for different choices of $y$, $z \in \dom~\vp$, and $\Gamma$ and applying the descent lemma \eqref{eq:lip-ineq}, Young's inequality, and the Lipschitz continuity of $\nabla f$, we obtain
\begingroup
\allowdisplaybreaks
\begin{align*} \psi(p_+) - \psi(x) & \\& \hspace{-12ex} = f(p_+) - f(x) + \vp(p_+) - \vp(p^\Lambda_v(x)) \\ & \hspace{-9ex}  + \vp(p^\Lambda_v(x)) - \vp(p^\Lambda(x)) +\vp(p^\Lambda(x)) - \vp(x) \\ & \hspace{-12ex} \leq  \iprod{\nabla f(x)}{p_+-p^\Lambda_v(x)} + \frac{L_f}{2}  \|p_+-x\|^2 + \iprod{\alpha \Lambda_+ d - v_+}{p_+ - p^\Lambda_v(x)} \\ & \hspace{-9ex} + \half \left[ \frac{1}{\lambda_+} - \frac{1}{\lambda} \right] \|F^\Lambda_v(x)\|^2 - \frac{1}{2\lambda_+} \|p_+ - x\|^2 - \frac{1}{2\lambda_+} \|p_+ - p^\Lambda_v(x)\|^2 \\ & \hspace{-9ex} + \iprod{\nabla f(x)-v}{p^\Lambda_v(x) - p^\Lambda(x)} - \frac{1}{2\lambda} \left[ \|p^\Lambda_v(x) - p^\Lambda(x)\|^2 + \|F^\Lambda(x)\|^2 \right] \\ & \hspace{-12ex} \leq  \frac{\| \nabla f(x) - v\|^2}{2\rho}  +  \left[ \frac{\rho}{2} - \frac{1}{2\lambda}\right] \|p^\Lambda_v(x) - p^\Lambda(x) \|^2 +  \left[ \frac{L_f}{2} - \frac{1}{2\lambda_+} \right]ÃÂ \|p_+-x\|^2   \\  &\hspace{-9ex}  + \frac12\left[\frac{1}{\lambda_+} - \frac{1}{\lambda}  \right] \|F^\Lambda_v(x)\|^2 - \frac{1}{2\lambda} \|F^\Lambda(x)\|^2  + \frac{\lambda_+}{2} \|\nabla f(x) + \lambda_+^{-1}\alpha d - v_+\|^2. \end{align*}
%
\endgroup
Extending the penultimate term by adding and subtracting $\nabla f(x+\beta d)$ and using
\[  \| \nabla f(x) - \nabla f(x+\beta d) + \lambda_+^{-1}\alpha d \|^2 \leq \left( \frac{\alpha}{\lambda_+} + L_f\beta \right)^2 \|d\|^2 = \frac{\ell(\lambda_+,\alpha,\beta)}{\lambda_+} \|d\|^2
\]
we finally get the estimate stated in Lemma \ref{lemma:prox-descent}. 
\end{proof}

The following convergence result for supermartingale-type stochastic processes is due to Robbins and Siegmund, \cite{RobSie71}, and will be a fundamental tool in the analysis of our proposed approach. 

\begin{theorem} \label{theorem:conv-superm} Let $({\sf Y}_k)_k$, $({\sf U}_k)_k$, $({\sf A}_k)_k$, $({\sf B}_k)_k$ be sequences of nonnegative integrable random variables, adapted to the filtration $(\mathcal U_k)_k$ such that we have $\Exp[{\sf Y}_{k+1} \mid \mathcal U_k] \leq (1+{\sf A}_k) {\sf Y}_k - {\sf U}_k + {\sf B}_k$, $\sum {\sf A}_k < \infty$, and $\sum {\sf B}_k < \infty$ \as for all $k \in \N$. Then, $({\sf Y}_k)_k$ \as converges and it holds $\sum {\sf U}_k < \infty$. \end{theorem}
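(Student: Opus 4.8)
The statement is the classical Robbins--Siegmund lemma, and the plan is to reduce it to the convergence theorem for nonnegative supermartingales by two successive transformations: one that removes the multiplicative perturbation $(1+{\sf A}_k)$, and one (a stopping-time truncation) that removes the additive term ${\sf B}_k$. The obstacle that prevents a direct application of the nonnegative supermartingale theorem is precisely that ${\sf A}_k$ and ${\sf B}_k$ are random, so their running sums are finite only almost surely and need not be bounded or integrable.

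Since $\sum {\sf A}_k < \infty$ almost surely, the predictable process $\beta_k := \prod_{i=0}^{k-1}(1+{\sf A}_i)^{-1}$ (with $\beta_0 := 1$) is $\mathcal U_{k-1}$-measurable, takes values in $(0,1]$, and converges almost surely to a strictly positive limit $\beta_\infty$. Setting ${\sf Y}_k' := \beta_k {\sf Y}_k$, ${\sf U}_k' := \beta_{k+1}{\sf U}_k$, ${\sf B}_k' := \beta_{k+1}{\sf B}_k$ — all dominated by the corresponding original variables, hence integrable — and multiplying the assumed inequality by the $\mathcal U_k$-measurable factor $\beta_{k+1}$, the identity $\beta_{k+1}(1+{\sf A}_k) = \beta_k$ gives $\Exp[{\sf Y}_{k+1}' \mid \mathcal U_k] \leq {\sf Y}_k' - {\sf U}_k' + {\sf B}_k'$. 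Introducing the accumulated process
\[ {\sf W}_k := {\sf Y}_k' + \sum_{i=0}^{k-1}{\sf U}_i' - \sum_{i=0}^{k-1}{\sf B}_i', \]
a direct computation then shows $\Exp[{\sf W}_{k+1}\mid\mathcal U_k] \leq {\sf W}_k$, i.e., $({\sf W}_k)_k$ is an (integrable) supermartingale.

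Because $({\sf W}_k)_k$ need not be bounded below by an integrable variable, I would next truncate via the stopping times $\tau_m := \inf\{k \geq 0 : \sum_{i=0}^{k}{\sf B}_i > m\}$, $m \in \N$; these are $(\mathcal U_k)_k$-stopping times since $\sum_{i=0}^k {\sf B}_i \in \mathcal U_k$. The stopped process $({\sf W}_{k\wedge\tau_m})_k$ is again a supermartingale, and since $(k\wedge\tau_m)-1 < \tau_m$ one has $\sum_{i=0}^{(k\wedge\tau_m)-1}{\sf B}_i' \leq \sum_{i=0}^{(k\wedge\tau_m)-1}{\sf B}_i \leq m$, whence ${\sf W}_{k\wedge\tau_m} \geq -m$. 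Thus ${\sf W}_{k\wedge\tau_m}+m$ is a nonnegative supermartingale and converges almost surely as $k\to\infty$. Since $\sum_i{\sf B}_i<\infty$ almost surely, the events $\{\tau_m=\infty\}$ exhaust a set of full probability, and on $\{\tau_m=\infty\}$ the stopped process coincides with $({\sf W}_k)_k$; hence $({\sf W}_k)_k$ itself converges almost surely.

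Finally, using $\sum_{i=0}^{k-1}{\sf B}_i' \to \sum_i{\sf B}_i' \leq \sum_i{\sf B}_i < \infty$ almost surely, convergence of $({\sf W}_k)_k$ forces ${\sf Y}_k' + \sum_{i=0}^{k-1}{\sf U}_i'$ to converge; as both summands are nonnegative and $\sum_{i=0}^{k-1}{\sf U}_i'$ is nondecreasing, this yields $\sum_i{\sf U}_i' < \infty$ and convergence of $({\sf Y}_k')_k$ almost surely. Dividing by the strictly positive limits $\beta_k \to \beta_\infty$ and $\beta_{i+1}\to\beta_\infty$ transfers both conclusions back: $({\sf Y}_k)_k$ converges and $\sum_i{\sf U}_i < \infty$ almost surely. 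I expect the delicate points to be confined to the truncation step — checking that $\tau_m$ is a stopping time, that the stopped process stays an integrable supermartingale bounded below, and that the almost sure limits patch consistently across $m$ — whereas the algebraic verification that $({\sf W}_k)_k$ is a supermartingale is routine. \qed
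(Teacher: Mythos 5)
Your proof is correct. The paper itself does not prove this statement --- it is quoted as the classical Robbins--Siegmund lemma and attributed to the reference \cite{RobSie71} --- so there is no in-paper argument to compare against; what you have written is essentially the original Robbins--Siegmund proof. All the delicate points check out: $\beta_{k+1}$ is $\mathcal U_k$-measurable and $\beta_{k+1}(1+{\sf A}_k)=\beta_k$, so $({\sf W}_k)_k$ is an integrable supermartingale; $\tau_m$ is a stopping time because the running sums of ${\sf B}_i$ are adapted; the stopped process is bounded below by $-m$ since the partial sum up to $(k\wedge\tau_m)-1$ has not yet exceeded $m$; the events $\{\tau_m=\infty\}$ exhaust a full-measure set by $\sum{\sf B}_i<\infty$ a.s.; and the final transfer back works because $\beta_k$ decreases to a strictly positive limit $\beta_\infty$, so $\sum{\sf U}_i\leq\beta_\infty^{-1}\sum{\sf U}_i'<\infty$.
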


%
%

 
\subsection{Convergence Analysis of Algorithm \ref{alg:seqn}} In this section, we investigate the global convergence properties of Algorithm \ref{alg:seqn} in detail. Our results primarily rely on the abstract bound derived in Lemma \ref{lemma:prox-descent} 
and are inspired by similar strategies used in the analysis of nonconvex stochastic optimization methods, see, e.g., \cite{GhaLan13,XuYin15,GhaLanZha16,WanMaGolLiu17}. 

In particular, following \cite{GhaLanZha16,XuYin15}, we will first assume that the variance of the stochastic gradients can be controlled and decreases in a suitable way. (This can be achieved, e.g., by aggregating stochastic information or via utilizing so-called mini-batching schemes). Possible extensions and generalization are discussed later in section \ref{sec:var-alpha}.

The stochasticity in Algorithm \ref{alg:seqn} mainly results from the random selection of the oracles $v^k$ and $v^k_+$ in step \ref{alg:seqn-2} and \ref{alg:seqn-3}. In this work, we assume that the stochastic approximations $v^k$ and $v^k_+$ correspond to realizations of the random vectors ${\sf V}^k : \Omega \to \Rn$ and ${\sf V}^k_+ : \Omega \to \Rn$. Furthermore, we suppose that the underlying probability space $(\Omega,\mathcal F,\Prob)$ is sufficiently rich allowing us to model the involved random processes in a unified way. We now define the filtrations
\[ \mathcal F^k := \sigma({\sf V}^0, {\sf V}^0_+, ..., {\sf V}^k), \quad \mathcal F^k_+ := \sigma(\mathcal F^k \cup \sigma({\sf V}^k_+)). \]
By convention, let $({\sf D}^k)_k$ denote the stochastic process associated with the directions $(d^k)_k$ chosen in step \ref{alg:seqn-2}. We will work with the following stochastic assumptions. 

\begin{assumption} \label{ass:stoch-1} We assume: 
\setlength{\leftmargini}{7ex}
\begin{itemize}
\item[{\rm(B.1)}] The mapping ${\sf D}^k : \Omega \to \R^{n}$ is an $\mathcal F^k$-measurable function for all $k$. \vspace{.5ex} 
\item[{\rm(B.2)}] There is $\nu_k > 0$ such that we have $\Exp[\|{\sf D}^k\|^2 \mid \mathcal F^{k-1}_+] \leq \nu_k^2 \cdot \Exp[\|F^{\Lambda_k}_{{\sf V}^k}({\sf X}^k)\|^2 \mid \mathcal F^{k-1}_+]$ \aev and for all $k \in \N$.
\item[{\rm(B.3)}] For all $k \in \N$, it holds $\Exp[{\sf V}^k \mid \mathcal F^{k-1}_+] = \nabla f({\sf X}^k)$, $\Exp[{\sf V}^k_+ \mid \mathcal F^{k}] = \nabla f({\sf Z}^k)$ \aev and there exists $\sigma_k, \sigma_{k,+} > 0$ such that \aev 
\end{itemize}
\[ \Exp[\|\nabla f({\sf X}^k) - {\sf V}^k\|^2 \mid \mathcal F_+^{k-1}] \leq \sigma_k^2 \quad \text{and} \quad \Exp[\|\nabla f({\sf Z}^k) - {\sf V}_+^k\|^2 \mid \mathcal F^{k}] \leq \sigma_{k,+}^2. \]
\end{assumption}

The conditions on the oracles ${\sf V}^k$ and ${\sf V}_+^k$ in (B.3) are common in stochastic optimization, see \cite{GhaLan13,XuYin15,ByrHanNocSin16,WanMaGolLiu17,BolByrNoc18,BotCurNoc18}. The second assumption requires the chosen directions $(d^k)_k$ and the stochastic process $({\sf D}^k)_k$ to be related to the stochastic nonsmooth residual $F^{\Lambda_k}_{v^k}(x^k)$, $k \in \N$. A similar, deterministic variant of this condition is also utilized in the analysis of FBE-based algorithms in \cite{SteThePat17,TheStePat18}. In section \ref{section:direction}, we present several specific examples for choice of the directions $(d^k)_k$ and we construct a family of stochastic quasi-Newton-type directions that satisfy the conditions stated in (B.1) and (B.2). Under assumption (B.1), the design of Algorithm \ref{alg:seqn} implies that the processes $({\sf X}^k)_{k}$ and $({\sf Z}^k)_{k}$ are adapted to the filtrations $\mathcal F^k$ and ${\mathcal F}^k_+$, i.e., we have
\be \label{eq:prop-stoch} {\sf Z}^k \in \mathcal F^k \quad \text{and} \quad {\sf X}^{k+1} \in {\mathcal F}^{k}_+ , \quad \forall~k \geq 0. \ee

We now present one of our main convergence results of this section.

\begin{theorem} \label{theorem:conv-gen} Let the random process $({\sf X}^k)_k$ be generated by Algorithm \ref{alg:seqn} using parameter matrices of the form $\Lambda_k = \lambda_k^{-1} I$ and $\Lambda_{k,+} = \lambda_{k,+}^{-1} I$. Suppose that the assumptions {\rm(A.1)}--{\rm(A.2)} and {\rm(B.1)}--{\rm(B.3)} are satisfied and let us assume that the step sizes $(\lambda_k)_k$, $(\lambda_{k,+})_k$, $(\alpha_k)_k$, and $(\beta_k)_k$ are chosen as follows:   
\be \label{eq:choice-step-1} \lambda_{k,+} \leq \frac{1}{L_f}, \quad \lambda_k \leq \frac{(1-\bar{\rho})\lambda_{k,+}}{1+\nu_k^2 (\alpha_k + L_f \beta_k \lambda_{k,+})^2}, \ee
for all $k \in \N$ and some $\bar{\rho} \in (0,1)$. Then, under the additional conditions 
\be \label{eq:choice-step-2} \sum \lambda_k = \infty, \quad \sum \lambda_k \sigma_k^2 < \infty, \quad \sum \lambda_{k,+} \sigma_{k,+}^2 < \infty \ee
it follows $\liminf_{k \to \infty} \Exp[\|F^I({\sf X}^k)\|^2] = 0$ and $\liminf_{k \to \infty} \| F^I({\sf X}^k) \| = 0$ \as and $(\psi({\sf X}^k))_k$ \as converges to a random variable $\sf Y^*$ with $\lim_{k\to \infty} \Exp[\psi({\sf X}^k)] = \Exp[{\sf Y}^*]$.  
\end{theorem}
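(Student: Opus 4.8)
The plan is to combine the inexact descent estimate of Lemma~\ref{lemma:prox-descent} with the Robbins--Siegmund supermartingale theorem (Theorem~\ref{theorem:conv-superm}). First I would apply Lemma~\ref{lemma:prox-descent} at the $k$-th iterate with $x = {\sf X}^k$, $d = {\sf D}^k$, $\Lambda = \Lambda_k$, $\Lambda_+ = \Lambda_{k,+}$, so that $p_+ = {\sf X}^{k+1}$ and $z = {\sf Z}^k = {\sf X}^k + \beta_k {\sf D}^k$. Then I would take the conditional expectation $\Exp[\cdot \mid \mathcal F^{k-1}_+]$ of the resulting inequality. Using a tower-property argument over $\mathcal F^{k-1}_+ \subseteq \mathcal F^k$: the cross term $2\iprod{\nabla f({\sf Z}^k) - {\sf V}^k_+}{\lambda_{k,+}(\nabla f({\sf X}^k) - \nabla f({\sf Z}^k)) + \alpha_k {\sf D}^k}$ vanishes in expectation because, conditioned on $\mathcal F^k$, we have $\Exp[{\sf V}^k_+\mid\mathcal F^k] = \nabla f({\sf Z}^k)$ by (B.3), while the second argument of the inner product is $\mathcal F^k$-measurable by (B.1) and \eqref{eq:prop-stoch}; the terms $\|\nabla f({\sf X}^k)-{\sf V}^k\|^2$ and $\|\nabla f({\sf Z}^k)-{\sf V}^k_+\|^2$ are bounded by $\sigma_k^2$ and $\sigma_{k,+}^2$ using (B.3); and $\Exp[\|{\sf D}^k\|^2\mid\mathcal F^{k-1}_+]$ is bounded via (B.2) by $\nu_k^2\,\Exp[\|F^{\Lambda_k}_{{\sf V}^k}({\sf X}^k)\|^2\mid \mathcal F^{k-1}_+]$.

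Next I would choose the free parameter $\rho$ in Lemma~\ref{lemma:prox-descent} as $\rho = \rho_k := 1/\lambda_k$ so that the $\|p^\Lambda_v(x)-p^\Lambda(x)\|^2$ term drops out, and absorb the $\ell(\lambda_{k,+},\alpha_k,\beta_k)\|{\sf D}^k\|^2 = \lambda_{k,+}(\tfrac{\alpha_k}{\lambda_{k,+}}+L_f\beta_k)^2\|{\sf D}^k\|^2 = \lambda_{k,+}^{-1}(\alpha_k + L_f\beta_k\lambda_{k,+})^2\|{\sf D}^k\|^2$ into the $\|F^{\Lambda_k}_{{\sf V}^k}({\sf X}^k)\|^2$ coefficient via (B.2). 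Collecting terms, the coefficient of $\Exp[\|F^{\Lambda_k}_{{\sf V}^k}({\sf X}^k)\|^2\mid\mathcal F^{k-1}_+]$ becomes (up to the factor $1/2$) $\tfrac{1}{\lambda_{k,+}} - \tfrac{1}{\lambda_k} + \tfrac{\nu_k^2(\alpha_k+L_f\beta_k\lambda_{k,+})^2}{\lambda_{k,+}}$, which is $\le -\tfrac{\bar\rho}{\lambda_{k,+}} \le 0$ precisely under the step-size bound $\lambda_k \le (1-\bar\rho)\lambda_{k,+}/(1+\nu_k^2(\alpha_k+L_f\beta_k\lambda_{k,+})^2)$ in \eqref{eq:choice-step-1}; and the coefficient of $\|{\sf X}^{k+1} - {\sf X}^k\|^2$ is $L_f - 1/\lambda_{k,+} \le 0$ by the first bound $\lambda_{k,+}\le 1/L_f$. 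This yields, for suitable constants, an inequality of the form
\[
\Exp[\psi({\sf X}^{k+1})\mid\mathcal F^{k-1}_+] \le \psi({\sf X}^k) - \frac{1}{2\lambda_k}\|F^{\Lambda_k}({\sf X}^k)\|^2 + \tfrac{1}{2}\left(\frac{1}{\rho_k}\sigma_k^2 + \lambda_{k,+}\sigma_{k,+}^2\right),
\]
i.e.\ $\Exp[{\sf Y}_{k+1}\mid\mathcal U_k]\le (1+{\sf A}_k){\sf Y}_k - {\sf U}_k + {\sf B}_k$ with ${\sf Y}_k := \psi({\sf X}^k) - \inf_{\dom\vp}\psi \ge 0$ (using (A.2)), ${\sf A}_k \equiv 0$, ${\sf U}_k := \tfrac{1}{2\lambda_k}\|F^{\Lambda_k}({\sf X}^k)\|^2$, and ${\sf B}_k := \tfrac{1}{2}(\lambda_k^{-1}\sigma_k^2 + \lambda_{k,+}\sigma_{k,+}^2)$. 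A subtle point is the indexing mismatch between $\mathcal F^{k-1}_+$ and the natural filtration; I would set $\mathcal U_k := \mathcal F^{k-1}_+$ and check adaptedness carefully, noting $\psi({\sf X}^k)$ is $\mathcal F^{k-1}_+$-measurable by \eqref{eq:prop-stoch}.

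Finally, summability of $({\sf B}_k)$ is \emph{not} immediate from \eqref{eq:choice-step-2} as written, so I would have to be careful here: \eqref{eq:choice-step-2} gives $\sum\lambda_k\sigma_k^2<\infty$, not $\sum\lambda_k^{-1}\sigma_k^2<\infty$ --- the correct reading is that the ${\sf B}_k$ term coming from $\|\nabla f({\sf X}^k)-{\sf V}^k\|^2$ should be weighted by $\lambda_k$ (not $1/\lambda_k$) once one tracks the factor $\|F^{\Lambda_k}_v\|^2$ versus $\|F^{\Lambda_k}\|^2$ scaling carefully, or by reworking the bookkeeping of Lemma~\ref{lemma:prox-descent} so that the first term enters as $\lambda_k \cdot (\text{something bounded})$; this reconciliation is the main technical obstacle. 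Granting it, Theorem~\ref{theorem:conv-superm} gives that $(\psi({\sf X}^k))_k$ converges a.s.\ to some ${\sf Y}^*$ and $\sum_k \tfrac{1}{2\lambda_k}\|F^{\Lambda_k}({\sf X}^k)\|^2 < \infty$ a.s. Since $\sum\lambda_k=\infty$, the latter forces $\liminf_k \lambda_k^{-1}\|F^{\Lambda_k}({\sf X}^k)\|^2 = 0$ a.s., and a standard norm-equivalence argument relating $F^{\Lambda_k}$ to $F^I$ (the map $\Lambda\mapsto F^\Lambda$ satisfies $\|F^I(x)\| \le c\|F^\Lambda(x)\|$ for bounded $\lambda_k$, via the nonexpansiveness \eqref{eq:prox-nonexp} and the Lipschitz bound on $\nabla f$) upgrades this to $\liminf_k\|F^I({\sf X}^k)\| = 0$ a.s. For the expectation statement, I would take total expectations in the supermartingale inequality, sum over $k$, and use $\sum{\sf U}_k<\infty$ together with $\sum\lambda_k=\infty$ and Fatou's lemma to conclude $\liminf_k \Exp[\|F^I({\sf X}^k)\|^2] = 0$; monotone convergence applied to $\Exp[\psi({\sf X}^k)] $ and the a.s.\ convergence of $\psi({\sf X}^k)$ (plus a uniform-integrability or Fatou argument) gives $\lim_k\Exp[\psi({\sf X}^k)] = \Exp[{\sf Y}^*]$.
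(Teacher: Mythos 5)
Your architecture matches the paper's proof exactly: apply Lemma \ref{lemma:prox-descent} at the $k$-th iterate, kill the cross term by the tower property and the unbiasedness of ${\sf V}^k_+$ given $\mathcal F^k$, choose $\rho=\lambda_k^{-1}$ to eliminate the $\|p^{\Lambda}_v-p^{\Lambda}\|^2$ term, absorb $\ell(\lambda_{k,+},\alpha_k,\beta_k)\|{\sf D}^k\|^2$ via {\rm(B.2)}, use \eqref{eq:choice-step-1} to make the remaining coefficients nonpositive, and close with Robbins--Siegmund. However, the ``main technical obstacle'' you flag at the end is a phantom: with $\rho_k=1/\lambda_k$ you have $1/\rho_k=\lambda_k$, so the error term contributed by $\|\nabla f({\sf X}^k)-{\sf V}^k\|^2$ is $\tfrac{1}{2}\lambda_k\sigma_k^2$, \emph{not} $\tfrac{1}{2}\lambda_k^{-1}\sigma_k^2$ --- you simply inverted $\rho_k$ twice. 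Hence ${\sf B}_k=\tfrac{1}{2}(\lambda_k\sigma_k^2+\lambda_{k,+}\sigma_{k,+}^2)$ is summable precisely by \eqref{eq:choice-step-2} and no reworking of the bookkeeping is needed; your own displayed inequality (which correctly carries $\tfrac{1}{\rho_k}\sigma_k^2=\lambda_k\sigma_k^2$) already proves this.

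Two further points need tightening. First, the passage from $\sum_k\lambda_k^{-1}\|F^{\Lambda_k}({\sf X}^k)\|^2<\infty$ to $\liminf_k\|F^I({\sf X}^k)\|=0$ cannot rest on a norm equivalence $\|F^I(x)\|\le c\,\|F^{\Lambda_k}(x)\|$ with a uniform constant: the correct comparison (from \cite[Lemma 2]{Nes13}, using $\lambda_k\le 1$) is $\|F^I(x)\|\le\lambda_k^{-1}\|F^{\Lambda_k}(x)\|$, whose constant blows up if $\lambda_k\to 0$, as it typically does here. The right bookkeeping is $\lambda_k^{-1}\|F^{\Lambda_k}({\sf X}^k)\|^2\ge\lambda_k\|F^I({\sf X}^k)\|^2$, so that $\sum\lambda_k\|F^I({\sf X}^k)\|^2<\infty$ (in expectation and, by Borel--Cantelli, a.s.), and only then does $\sum\lambda_k=\infty$ force the liminf to vanish; note also that summability of ${\sf U}_k$ alone (which gives ${\sf U}_k\to 0$) is not what you want to invoke. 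Second, for $\lim_k\Exp[\psi({\sf X}^k)]=\Exp[{\sf Y}^*]$ the paper does not use monotone convergence or an unspecified uniform-integrability claim; it derives a \emph{pathwise} bound of the form \eqref{eq:des-later} from Lemma \ref{lemma:prox-descent} via Young's inequality, dominates $\psi({\sf X}^k)-\psi^*$ by an explicit random variable ${\sf U}$ whose integrability follows from Fatou and the summability conditions, and then applies dominated convergence. You should make that domination step explicit rather than leaving it as a gesture.
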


\begin{proof} First, due to the law of total expectation and $\mathcal F^{k-1}_+ \subset \mathcal F^k$ and ${\sf X}^k,{\sf D}^k, {\sf Z}^k \in \mathcal F^k$, it follows
\begin{align*} \Exp[\iprod{\nabla f({\sf Z}^k) - {\sf V}^k_+}{\lambda_{k,+}(\nabla f({\sf X}^k) - \nabla f({\sf Z}^k)) +\alpha_k {\sf D}^k} \mid \mathcal F^{k-1}_+] & \\ & \hspace{-55ex} = \Exp[\iprod{\Exp[\nabla f({\sf Z}^k) - {\sf V}^k_+ \mid \mathcal F^k]}{\lambda_{k,+}(\nabla f({\sf X}^k) - \nabla f({\sf Z}^k)) +\alpha_k {\sf D}^k} \mid \mathcal F^{k-1}_+] = 0, \end{align*}
$\text{a.e.}$, where we used the fact that ${\sf V}^k_+$ is an unbiased estimator of the gradient $\nabla f({\sf Z}^k)$. Next, taking conditional expectation in Lemma \ref{lemma:prox-descent}, applying assumptions (B.2) and (B.3), and choosing $\rho = \lambda_k^{-1}$, we obtain
\begin{align*} \Exp[\psi({\sf X}^{k+1}) \mid \mathcal F^{k-1}_+] - \psi({\sf X}^k) & \\ & \hspace{-24ex} \leq  \frac{\lambda_{k,+}\sigma_{k,+}^2}{2} + \half  \left[ L_f - \frac{1}{\lambda_{k,+}} \right] \Exp [\|{\sf X}^{k+1} - {\sf X}^k\|^2\mid \mathcal F^{k-1}_+ ] - \frac{1}{2\lambda_k} \|F^{\Lambda_k}({\sf X}^k)\|^2 \\ & \hspace{-20ex} + \frac{\lambda_k \sigma_k^2}{2}  + \half \left[ \frac{1}{\lambda_{k,+}} - \frac{1}{\lambda_k} + \nu_k^2 \cdot \ell(\lambda_{k,+},\alpha_k,\beta_k) \right] \Exp[\|F^{\Lambda_k}_{{\sf V}^k}({\sf X}^k)\|^2 \mid \mathcal F^{k-1}_+] 
\end{align*}
almost everywhere. Utilizing the bounds stated in \eqref{eq:choice-step-1}, we have
\begin{align} \nonumber \Exp[\psi({\sf X}^{k+1}) \mid \mathcal F^{k-1}_+] - \psi({\sf X}^k) & \\  \label{eq:est-martingale} & \hspace{-20ex} \leq \frac{\lambda_k \sigma_k^2}{2} + \frac{\lambda_{k,+}\sigma_{k,+}^2}{2} - \frac{1}{2\lambda_k} \|F^{\Lambda_k}({\sf X}^k)\|^2 - \frac{\bar{\rho}}{2\lambda_k} \Exp[ \|F^{\Lambda_k}_{{\sf V}^k}({\sf X}^k)\|^2 \mid \mathcal F^{k-1}_+ ].  \end{align}
Now, by assumption (A.2) there exists $\psi^* \in \R$ such that $\psi(x) \geq \psi^*$ for all $x \in \Rn$. Taking expectation and summing the latter inequality, we get
\begin{align*} \sum_{k= 0}^R \frac{\Exp[\|F^{\Lambda_k}({\sf X}^k)\|^2 + \bar{\rho} \cdot \|F^{\Lambda_k}_{{\sf V}^k}({\sf X}^k)\|^2]}{\lambda_k} \leq 2 [\psi(x^0) - \psi^*] + \sum_{k = 0}^R \lambda_k \sigma_k^2 + \lambda_{k,+}\sigma_{k,+}^2 \end{align*}
for all $R \in \N$. Due to $L_f \geq 1$, it holds $\lambda_{k,+} \leq 1$, $\lambda_k \leq 1$ and by \cite[Lemma 2]{Nes13}, we know that the mapping $\delta \mapsto {\delta}^{-1} \|F^{\frac{1}{\delta}I}(x)\|$ is a decreasing function of $\delta$. This yields $\Exp[\|F^I({\sf X}^k)\|^2] \leq  \lambda_k^{-2} \Exp[ \|F^{\Lambda_k}({\sf X}^k)\|^2]$ for all $k \in \N$ and thus, we can infer  $\sum \lambda_k \Exp[\|F^I({\sf X}^k)\|^2] < \infty$. Hence, the first statement in Theorem \ref{theorem:conv-gen} follows from the assumption $\sum \lambda_k = \infty$. Furthermore, applying the Borel-Cantelli lemma, it holds that $\sum \lambda_k \|F^I({\sf X}^k)\|^2 < \infty$ a.s., which finally implies $\liminf_{k\to \infty} \| F^I({\sf X}^k) \| = 0$ with probability 1. The third claim in Theorem \ref{theorem:conv-gen} is a direct consequence of assumption (A.2), \eqref{eq:est-martingale}, and Theorem \ref{theorem:conv-superm}. In order to establish convergence of $(\Exp[\psi({\sf X}^k)])_k$, we first notice that we have 
\begin{align} \nonumber \psi({\sf X}^{k+1}) & \leq \psi({\sf X}^k) + \frac{\lambda_k}{2} \|\nabla f({\sf X}^k) - {\sf V}^k\|^2  \\ÃÂÃÂ \label{eq:des-later}  & \hspace{3ex}  +\lambda_{k,+} \|\nabla f({\sf Z}^k) - {\sf V}_+^k\|^2 + (\nu_k^2\lambda_k)^{-1}\|{\sf D}^k\|^2, \end{align}
(almost) surely and for all $k \in \N$, which follows from Lemma \ref{lemma:prox-descent}, $\iprod{a}{b} \leq \half \|a\|^2 + \half \|b\|^2$, $a,b \in \Rn$, $\lambda_{k,+}^{-1} \leq \lambda_k^{-1}$, and $\ell(\lambda_{k,+},\alpha_k,\beta_k) < \nu_k^{-2}(\lambda_k^{-1} - \lambda_{k,+}^{-1}) \leq (\nu_k^2 \lambda_k)^{-1}$. By induction, this shows that the nonnegative random variable $\psi({\sf X}^k) - \psi^*$ is (almost) surely dominated by 
\[ {\sf U} := \psi(x^0) - \psi^* + \sum_{k=0}^\infty  \frac{\lambda_k}{2} \|\nabla f({\sf X}^k) - {\sf V}^k\|^2 + \lambda_{k,+} \|\nabla f({\sf Z}^k) - {\sf V}_+^k\|^2 + \frac{\|{\sf D}^k\|^2}{\nu_k^2\lambda_k}   \]
for all $k \in \N$. Moreover due to Fatou's lemma, we have $\Exp[{\sf U}] < \infty$. Hence, our last claim is a consequence of the dominated convergence theorem. 
\end{proof}

\begin{remark} \label{remark:ostrowski} Suppose that the assumptions stated in Theorem \ref{theorem:conv-gen} are fulfilled and it holds $\lambda_{k,+} \leq (1-\bar{\rho})L_f^{-1}$ for all $k$ and for $\bar{\rho} \in (0,1)$. Then, the following additional summability condition is satisfied: 
\[ \sum  \Exp[ \|{\sf X}^{k+1} - {\sf X}^k\|^2 ] \leq \sum \lambda_{k,+}^{-1} \Exp[\|{\sf X}^{k+1} - {\sf X}^k\|^2]  < \infty, \]
which implies $\|{\sf X}^{k+1} - {\sf X}^k\| \to 0$ a.s. This property can be interpreted as a probabilistic Ostrowski condition. 
\end{remark}

Next, we show that the liminf-convergence of $(\Exp[\|F^I({\sf X}^k)\|^2])_k$ and $(F^I({\sf X}^k))_k$ can be strengthened to strong convergence. Our result is motivated by \cite[Theorem 2.6]{WanMaGolLiu17}, where strong almost sure convergence is shown for a stochastic quasi-Newton-type method for nonconvex smooth problems. We note that the proof in \cite{WanMaGolLiu17} is based on a boundedness assumption on the gradient estimates that is not required here.  

\begin{theorem} \label{theorem:gen-conv-special} Suppose that the conditions stated in Theorem \ref{theorem:conv-gen} are satisfied with $\bar{\rho} \in (0,1)$ and $\lambda_{k,+} \leq (1-\bar{\rho})L_f^{-1}$ and assume that there exists $\underline{\rho} \in (0,1)$ such that 
\be \label{eq:choice-step-3}  \underline{\rho}\lambda_{k,+} \leq \lambda_k, \quad \forall~k. \ee
 Then, it holds that $\lim_{k \to \infty} \Exp[ \| F^I({\sf X}^k) \|] = 0$ and $\lim_{k\to\infty} F^I({\sf X}^k) = 0$ a.s.
\end{theorem}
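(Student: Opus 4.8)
The plan is to upgrade the $\liminf$-type convergence from Theorem~\ref{theorem:conv-gen} to full convergence by exploiting the additional lower bound \eqref{eq:choice-step-3} on $\lambda_k$ together with the Ostrowski-type property from Remark~\ref{remark:ostrowski}. The heart of the matter is a telescoping/summability argument showing that consecutive values $\|F^I({\sf X}^k)\|$ cannot oscillate: if a subsequence of $\|F^I({\sf X}^k)\|$ stays bounded away from $0$ while another tends to $0$, then the "gaps" in between accumulate enough mass to contradict the summability $\sum \lambda_k \Exp[\|F^I({\sf X}^k)\|^2] < \infty$ established in the proof of Theorem~\ref{theorem:conv-gen}, given that $\sum \lambda_k = \infty$.

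First I would record the quantitative ingredients already available: (i) from the proof of Theorem~\ref{theorem:conv-gen}, $\sum_k \lambda_k \Exp[\|F^I({\sf X}^k)\|^2] < \infty$ and hence $\sum_k \lambda_k \|F^I({\sf X}^k)\|^2 < \infty$ a.s.; (ii) from Remark~\ref{remark:ostrowski}, since $\lambda_{k,+} \leq (1-\bar\rho)L_f^{-1}$, we have $\sum_k \Exp[\|{\sf X}^{k+1}-{\sf X}^k\|^2] < \infty$ and $\|{\sf X}^{k+1}-{\sf X}^k\| \to 0$ a.s.; (iii) the estimate \eqref{eq:des-later} together with the summability conditions \eqref{eq:choice-step-2} shows the error terms on the right-hand side are summable, so in particular $\sum_k \lambda_{k,+}\|\nabla f({\sf Z}^k)-{\sf V}^k_+\|^2$, $\sum_k \lambda_k\|\nabla f({\sf X}^k)-{\sf V}^k\|^2$, and $\sum_k (\nu_k^2\lambda_k)^{-1}\|{\sf D}^k\|^2$ are a.s.\ finite; and (iv) $F^I$ is globally Lipschitz continuous with a constant depending only on $L_f$ (since $\proxt{I}{\vp}$ is nonexpansive and $\nabla f$ is $L_f$-Lipschitz, $F^I$ is Lipschitz with modulus $2+L_f$).

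Next I would set up the standard three-sequence oscillation argument. Suppose, for contradiction (on the a.s.\ statement), that on a set of positive probability $\limsup_k \|F^I({\sf X}^k)\| =: 2\eta > 0$ while $\liminf_k \|F^I({\sf X}^k)\| = 0$. On that event one constructs index pairs $m_j < n_j$ with $\|F^I({\sf X}^{m_j})\| \geq 2\eta$, $\|F^I({\sf X}^{n_j})\| < \eta$, and $\|F^I({\sf X}^k)\| \geq \eta$ for $m_j \leq k < n_j$. By the Lipschitz bound (iv), $\eta \leq \|F^I({\sf X}^{m_j})\| - \|F^I({\sf X}^{n_j})\| \leq (2+L_f)\sum_{k=m_j}^{n_j-1}\|{\sf X}^{k+1}-{\sf X}^k\|$. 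The key quantitative link is to bound $\|{\sf X}^{k+1}-{\sf X}^k\|$ by (a constant times) $\lambda_k\|F^I({\sf X}^k)\|$ plus stochastic error terms: indeed ${\sf X}^{k+1}-{\sf X}^k = -\lambda_{k,+}F^{\Lambda_{k,+}}_{{\sf V}^k_+}({\sf X}^k + \alpha_k {\sf D}^k - \lambda_{k,+}{\sf V}^k_+ \text{-shifted})$ — more carefully, one expands ${\sf X}^{k+1}-{\sf X}^k$ via nonexpansiveness of the prox into a term controlled by $\lambda_{k,+}\|F^{\Lambda_{k,+}}({\sf X}^k)\|$, a term $\alpha_k\|{\sf D}^k\|$, and a gradient-error term $\lambda_{k,+}\|\nabla f({\sf Z}^k)-{\sf V}^k_+\|$, and then uses $\|F^{\Lambda_{k,+}}({\sf X}^k)\| \le \|F^I({\sf X}^k)\|$ (via \cite[Lemma 2]{Nes13} and $\lambda_{k,+}\le 1$) together with (B.2) and \eqref{eq:choice-step-3} to convert everything into $C(\lambda_k\|F^I({\sf X}^k)\| + \text{errors}_k)$. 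Summing over the block $[m_j,n_j)$ and using Cauchy–Schwarz, $\big(\sum_{k=m_j}^{n_j-1}\lambda_k\|F^I({\sf X}^k)\|\big)^2 \le \big(\sum_{k=m_j}^{n_j-1}\lambda_k\big)\big(\sum_{k=m_j}^{n_j-1}\lambda_k\|F^I({\sf X}^k)\|^2\big)$; the second factor tends to $0$ along $j$ by (i), so the blocks $[m_j,n_j)$ must have $\sum_{k\in[m_j,n_j)}\lambda_k \to \infty$, while on the other hand $\sum_{k\in[m_j,n_j)}\lambda_k\|F^I({\sf X}^k)\|^2 \ge \eta^2 \sum_{k\in[m_j,n_j)}\lambda_k \to \infty$, contradicting (i). Hence $\limsup_k\|F^I({\sf X}^k)\| = 0$ a.s., giving $F^I({\sf X}^k)\to 0$ a.s. The $L^1$-convergence $\Exp[\|F^I({\sf X}^k)\|]\to 0$ then follows because $(\psi({\sf X}^k))_k$ and the partial sums of the errors are dominated (as in the end of the proof of Theorem~\ref{theorem:conv-gen}) by the integrable variable ${\sf U}$, which yields uniform integrability of $\|F^I({\sf X}^k)\|^2$ — alternatively, one re-runs the same block argument directly in expectation using $\sum_k \lambda_k = \infty$ and $\sum_k \lambda_k\Exp[\|F^I({\sf X}^k)\|^2]<\infty$ plus $\Exp[\|{\sf X}^{k+1}-{\sf X}^k\|^2] \le C\lambda_k^2\Exp[\|F^I({\sf X}^k)\|^2] + C\cdot\text{(summable errors)}$ and Jensen.

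The main obstacle I anticipate is the bookkeeping in the step ${\sf X}^{k+1}-{\sf X}^k \leq C(\lambda_k\|F^I({\sf X}^k)\| + \text{error}_k)$: one must carefully expand the prox step \eqref{eq:algo-scheme}, absorb the extra-step displacement $\alpha_k {\sf D}^k$ using (B.2) (which controls $\|{\sf D}^k\|$ by $\nu_k\|F^{\Lambda_k}_{{\sf V}^k}({\sf X}^k)\|$, itself controllable by $\|F^{\Lambda_k}({\sf X}^k)\|$ plus the gradient error $\|\nabla f({\sf X}^k)-{\sf V}^k\|$), and ensure the "error" terms collected are exactly the ones shown to be a.s.\ summable in \eqref{eq:des-later} and \eqref{eq:choice-step-2}. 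The ratio condition \eqref{eq:choice-step-3} is precisely what prevents $\lambda_{k,+}$ from dwarfing $\lambda_k$ so that the displacement bound stays proportional to $\lambda_k$. Once that Lipschitz-type displacement estimate is in hand, the oscillation/telescoping contradiction is routine.
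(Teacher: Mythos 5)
Your overall skeleton is the right one and matches the paper's: an oscillation/block argument with indices $t_i<\ell_i$, the Lipschitz continuity of $F^I$, the a.s.\ summability of $\sum_k\lambda_k\|F^I({\sf X}^k)\|^2$ forcing $\gamma_i:=\sum_{k=t_i}^{\ell_i-1}\lambda_k\to 0$, and \eqref{eq:choice-step-3} as the bridge between $\lambda_k$ and $\lambda_{k,+}$. However, the quantitative link you propose --- the pointwise bound $\|{\sf X}^{k+1}-{\sf X}^k\|\le C\bigl(\lambda_k\|F^I({\sf X}^k)\|+\mathrm{err}_k\bigr)$ --- does not hold and cannot be repaired along the lines you sketch. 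Expanding the prox step via nonexpansiveness gives
\be
\|{\sf X}^{k+1}-{\sf X}^k\|\le \alpha_k\|{\sf D}^k\|+\lambda_{k,+}\|{\sf V}^k_+-\nabla f({\sf X}^k)\|+\|F^{\Lambda_{k,+}}({\sf X}^k)\|,
\ee
and the residual $F^{\Lambda_{k,+}}({\sf X}^k)$ is \emph{already} a step-scaled quantity: by \cite[Lemma 2]{Nes13} one only has $\lambda_{k,+}\|F^I({\sf X}^k)\|\le\|F^{\Lambda_{k,+}}({\sf X}^k)\|\le\|F^I({\sf X}^k)\|$, so there is no extra factor $\lambda_k$ to be gained (you appear to conflate $F^\Lambda$ with the envelope gradient $\Lambda F^\Lambda$). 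With only $\|{\sf X}^{k+1}-{\sf X}^k\|\lesssim\|F^I({\sf X}^k)\|+\mathrm{err}_k$, your Cauchy--Schwarz step produces the uncontrolled factor $\sum_{k\in[t_i,\ell_i)}\lambda_k^{-1}$ rather than $\gamma_i$. Two further problems: {\rm(B.2)} is a conditional second-moment bound and gives no pointwise control of $\|{\sf D}^k\|$ on a fixed sample path; and the error terms you collect are \emph{unsquared} ($\lambda_{k,+}\|\nabla f({\sf Z}^k)-{\sf V}^k_+\|$), whose a.s.\ summability does not follow from \eqref{eq:choice-step-2} or \eqref{eq:des-later} (which control squared, weighted errors).

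The fix is the ingredient you already listed as (ii) but then did not deploy: the descent estimate \eqref{eq:est-martingale} with $\lambda_{k,+}\le(1-\bar\rho)L_f^{-1}$ yields $\sum_k\lambda_{k,+}^{-1}\|{\sf X}^{k+1}-{\sf X}^k\|^2<\infty$ a.s.\ (Remark \ref{remark:ostrowski} plus Borel--Cantelli), and one should apply Cauchy--Schwarz directly to the telescoped displacement,
\be
\|x^{\ell_i}-x^{t_i}\|\le\sum_{k=t_i}^{\ell_i-1}\sqrt{\lambda_{k,+}}\cdot\frac{\|x^{k+1}-x^k\|}{\sqrt{\lambda_{k,+}}}\le\Bigl(\underline{\rho}^{-1}\gamma_i\Bigr)^{\frac12}\Bigl(\sum_{k=t_i}^{\infty}\lambda_{k,+}^{-1}\|x^{k+1}-x^k\|^2\Bigr)^{\frac12},
\ee
where \eqref{eq:choice-step-3} is used exactly to bound $\lambda_{k,+}\le\underline{\rho}^{-1}\lambda_k$. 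Since $\gamma_i\to0$ and the second factor is bounded, $\|x^{\ell_i}-x^{t_i}\|\to0$, and the Lipschitz continuity of $F^I$ gives the contradiction $\veps\le L_F\|x^{\ell_i}-x^{t_i}\|\to0$. This avoids any pointwise relation between the displacement and $F^I({\sf X}^k)$. Your justification of the $L^1$ statement via uniform integrability is also not immediate (domination of $\psi({\sf X}^k)$ does not dominate $\|F^I({\sf X}^k)\|$); the intended route is to repeat the block argument for the deterministic sequence $\Exp[\|F^I({\sf X}^k)\|]$ using the expectations of the same two summable series and Jensen's inequality.
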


\begin{proof} We first verify the strong convergence of the sequence $(\|F^I({\sf X}^k)\|)_k$. According to Remark \ref{remark:ostrowski} and using Borel-Cantelli, we obtain $\sum \lambda_{k,+}^{-1} \|{\sf X}^{k+1} - {\sf X}^k\|^2 < \infty$ almost surely. 
%
%
Let us define the events $S_1 = \{ \omega: \sum \lambda_{k,+}^{-1} \|{\sf X}^{k+1}(\omega) - {\sf X}^k(\omega)\|^2 < \infty \}$, $S_2 = \{\omega: \sum_k \lambda_k \|F^I({\sf X}^k(\omega))\|^2 < \infty \}$, and  
\[ S_3 = \{ \omega: (\|F^{I}({\sf X}^k(\omega))\|)_k \, \text{does not converge}\} \] 
and let us assume that the claim $\lim_k F^I({\sf X}^k) = 0$, a.s., does not hold, i.e., we have $\Prob(S_3) > 0$. Theorem \ref{theorem:conv-gen} and our last steps ensure $\Prob(S_1) = \Prob(S_2) = 1$ and thus, it follows $\Prob(S_1 \cap S_2 \cap S_3) > 0$. We now consider an arbitrary sample $\omega \in S_1 \cap S_2 \cap S_3$ with associated realizations $(x^k)_k \equiv ({\sf X}^k(\omega))_k$, etc. Following the strategies used in the convergence analysis of classical trust region-type methods, see, e.g., \cite[Theorem 6.4.6]{ConGouToi00}, the condition $\omega \in S_2 \cap S_3$ implies that there exists $\veps > 0$ and infinite, increasing sequences $(t_i)_i$ and $(\ell_i)_i$ such that $\ell_i > t_i$ for all $i \in \N_0$ and
\[ \|F^I(x^{t_i}) \| \geq 2\veps, \quad \| F^I(x^{\ell_i}) \| < \veps, \quad \text{and} \quad \| F^I(x^k) \| \geq \veps, \quad \;\;~k = t_i+1,...,\ell_i - 1. \]
(Note that $(t_i)_i$ and $(\ell_i)_i$ depend on $\omega$). Thus, due to $\omega \in S_2$, it follows 
\[ \infty > \sum_{k=0}^\infty \lambda_k \|F^I(x^k) \|^2 \geq \sum_{i=0}^\infty \sum_{k = t_i}^{\ell_i - 1} \lambda_k \|F^I(x^k) \|^2 \geq \veps^2 \sum_{i=0}^\infty \sum_{k= t_i}^{\ell_i - 1} \lambda_k \]
and consequently, setting $\gamma_i := \sum_{k= t_i}^{\ell_i - 1} \lambda_k$, $i \in \N_0$, we have $\gamma_i \to 0$ as $i \to \infty$. 
%
%
Applying H\"older's inequality and \eqref{eq:choice-step-3}, this yields
\begin{align*} \|x^{\ell_i} - x^{t_i}\| \leq \sum_{k=t_i}^{\ell_i-1} \frac{\sqrt{\lambda_k}}{\sqrt{\underline{\rho}\lambda_{k,+}}} \|x^{k+1} - x^k\| \leq \frac{\sqrt{\gamma_i}}{\sqrt{\underline{\rho}}} \left [ \sum_{k=t_i}^\infty \frac{1}{\lambda_{k,+}} \|x^{k+1} - x^k\|^2 \right]^\half. \end{align*}
Due to $\omega \in S_1$, the sum appearing on the right side of the latter estimate converges and hence, it follows $\|{x}^{\ell_i} - {x}^{t_i} \|  \to 0$. Finally, since $x \mapsto F^I(x)$ is Lipschitz continuous with constant $L_F := 2+L_f$, it holds that
\begin{align*} \veps < |\|F^I(x^{\ell_i}) \| - \|F^I(x^{t_i})\||  \leq  \| F^I({x}^{\ell_i}) - F^I({x}^{t_i})\| \leq L_F \|{x}^{\ell_i} - {x}^{t_i}\| \to 0, \end{align*}
as $i \to \infty$, which is a contradiction. Since the sample $\omega$ was chosen arbitrarily, this implies $\Prob(S_1 \cap S_2 \cap S_3) = 0$ and $\Prob(S_3) = 0$ and consequently, $(F^I({\sf X}^k))_k$ converges \as to zero. The strong convergence in mean can be shown in a similar way by taking expectation in the latter calculations and using Jensen's inequality, see Theorem \ref{theorem:complex}. 
\end{proof}


The different conditions on the step sizes and parameters in \eqref{eq:choice-step-1}, \eqref{eq:choice-step-2}, and \eqref{eq:choice-step-3} require $\lambda_k$ and $\lambda_{k,+}$ and the noise levels $\sigma_k$ and $\sigma_{k,+}$ to be of the same or of a similar order. Furthermore, the step sizes $\alpha_k$ and $\beta_k$ should be chosen to balance the possibly unbounded parameters $\nu_k$, $k \in \N$, in order to guarantee the non-summability condition $\sum \lambda_k = \infty$. 

\begin{remark} \label{remark:unbiased-1} Reconsidering the proof of Theorem \ref{theorem:conv-gen}, convergence can also established under the weaker stochastic conditions $\Exp[\|{\sf D}^k\|^2] \leq \nu_k^2 \cdot \Exp[\|F^{\Lambda_k}_{{\sf V}^k}({\sf X}^k)\|^2]$, and 
\[ \Exp[\|\nabla f({\sf X}^k) - {\sf V}^k\|^2] \leq \sigma_k^2, \quad \Exp[\|\nabla f({\sf Z}^k) - {\sf V}^k_+\|^2] \leq \sigma_{k,+}^2, \]
if the bound for the step size $\lambda_k$ is slightly adjusted. (See, e.g., \eqref{eq:des-later} for comparison). Although, we can not apply the martingale-type result in Theorem \ref{theorem:conv-superm} in this case, we can still guarantee almost sure convergence of $\|F^{I}({\sf X}^k)\|$ and convergence in mean. These weaker conditions allow the oracle ${\sf V}^k_+$ to be a \textit{biased} estimator of $\nabla f{({\sf Z}^k)}$. In particular, ${\sf V}^k$ and ${\sf V}^k_+$ can be dependent random variables.  
\end{remark}


\subsection{Convergence Under Weaker Conditions} \label{sec:var-alpha}

In this section, we show that global convergence of Algorithm \ref{alg:seqn} can be ensured under weaker assumptions on the variances $(\sigma_k)_k$ and $(\sigma_{k,+})_k$. Specifically, convergence can be established if the variances are simply bounded and the step size $\alpha_k$ is chosen to depend on $\lambda_{k,+}$ and decreases. 

This observation is based on two recent works by Davis and Drusvyatskiy \cite{DavDru18-1,DavDru18-2} and we will verify that the techniques developed in \cite{DavDru18-1,DavDru18-2} can also be transferred and generalized to our algorithmic framework. Let us notice that the theoretical results in \cite{DavDru18-1,DavDru18-2} are not immediately applicable since the direction ${\sf V}_+^k - \alpha_k\lambda_{k,+}^{-1}{\sf D}^k$ is typically not an unbiased estimator of $\nabla f({\sf Z}^k)$ (or $\nabla f({\sf X}^k)$). 
In contrast to the analysis in the last section, we will see that the parameters $\alpha_k$ and $\beta_k$ now need to be chosen sufficiently small in order to balance the possible bias. 



As in \cite{DavDru18-1,DavDru18-2}, the main idea is to study the approximate descent properties of the random process $({\sf X}^k)_k$ using the Moreau envelope $\envt{\Theta}{\psi}$,  $\Theta := \theta^{-1} I$, instead of the objective function $\psi$. Under assumption (A.1), $\psi$ is $\theta^{-1}$-weakly convex for $\theta < L_f^{-1}$, i.e., the mapping $\psi + \frac{1}{2\theta}\|\cdot\|^2$ is (strongly) convex, proper, and lower semicontinuous for all $\theta < L_f^{-1}$. In this case, the Moreau envelope is a well-defined and continuously differentiable function with gradient $\nabla \envt{\Theta}{\psi}(x) = \Theta(x - \proxt{\Theta}{\psi}(x))$ and as shown in \cite{Don09,DruLew18}, we have the following connection to the 
residual $F^{\Theta}(x)$:
\be \label{prop:gradient-relation}
(1-L_f\theta) \|F^{\Theta}(x)\| \leq   \|\nabla \envt{\Theta}{\psi}(x)\| \leq (1+L_f\theta) \|F^{\Theta}(x)\|, \quad \forall~x.
\ee
Furthermore, setting $\bar x^k = \proxt{\Theta}{\psi}(x^k)$, the definition of the Moreau envelope allows us to derive a fundamental descent-type inequality:
\be \label{prop:env-descent} \envt{\Theta}{\psi}(x^{k+1})  \leq \envt{\Theta}{\psi}(x^k) + \frac{1}{2\theta} \left[ \|\bar x^k - x^{k+1}\|^2 - \|\bar x^k - x^k\|^2 \right].
\ee
In the next lemma, we first estimate the term $\|\bar x^k - x^{k+1}\|^2$ in \eqref{prop:env-descent}. 

\begin{lemma}\label{lemma:point-diff}
Let $x \in \dom~\vp$, $d \in \Rn$, $\Lambda_+ = \lambda_+^{-1} I$, $\Theta=\theta^{-1}I$, $\alpha, \beta>0$, $\rho_1,\rho_2 >0$  and the approximations $v, v_+ \in \Rn$ be given. Under assumption {\rm(A.1)} it holds that
\begin{align*}
\| p_+ - \bar x \|^2  &\leq \left[ (1+\rho_1)\tau_+^2 + 2L_f \lambda_{+}\tau_+ +(1+\rho_2)L_f^2\lambda_{+}^2 \right] \|\bar x - x\|^2 \\ 
& \hspace{2ex} + \left[ 1+\frac{1}{\rho_{1}}+\frac{1}{\rho_{2}} \right]  \mu^2\|d\|^2
+2 \lambda_{+} \iprod{ \tau_+[\bar x - x] -p}{v_+ - \nabla f(x+\beta d)} \\
&\hspace{2ex} + \lambda_{+}^2 \|\nabla f(x+\beta d) - v_+\|^2, 
\end{align*}
where $\bar x = \proxt{\Theta}{\psi}(x)$ , $p_+ := \proxt{\Lambda_+}{\vp}(x+\alpha d - \Lambda^{-1}_+v_+)$, $p := \Lambda_{+}^{-1}[\nabla f(\bar x) - \nabla f(x+\beta d)] + \alpha d $,  $\mu :=\alpha + L_f \beta\lambda_{+}$, and $\tau_+ := 1- \lambda_+\theta^{-1}$.
\end{lemma}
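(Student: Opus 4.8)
The plan is to identify $\bar x$ as a fixed point of the forward--backward map underlying $p_+$ and then exploit nonexpansiveness of the proximity operator. First I would write down the optimality condition for $\bar x = \proxt{\Theta}{\psi}(x)$, namely $\theta^{-1}(x-\bar x) \in \partial\psi(\bar x) = \nabla f(\bar x) + \partial\vp(\bar x)$ (using that $f$ is smooth), and rearrange it through \eqref{eq:prox-opt}: since $\vp$ is convex, lower semicontinuous, and proper, $\proxt{\Lambda_+}{\vp}$ is single-valued and $\bar x$ is the unique solution of $\lambda_+^{-1}(w-\cdot)\in\partial\vp(\cdot)$ with $w := (1-\tau_+)x + \tau_+\bar x - \lambda_+\nabla f(\bar x)$, where $\tau_+ = 1-\lambda_+\theta^{-1}$. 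Hence $\bar x = \proxt{\Lambda_+}{\vp}(w)$. Because $\proxt{\Lambda_+}{\vp}$ is firmly nonexpansive in $\|\cdot\|_{\Lambda_+}$ by \eqref{eq:prox-nonexp} and $\Lambda_+ = \lambda_+^{-1}I$, it is $1$-Lipschitz in the Euclidean norm, so
\[ \|p_+ - \bar x\| = \|\proxt{\Lambda_+}{\vp}(x+\alpha d - \Lambda_+^{-1}v_+) - \proxt{\Lambda_+}{\vp}(w)\| \le \|(x+\alpha d - \Lambda_+^{-1}v_+) - w\|. \]

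Next I would compute the argument difference explicitly. Substituting $w$ and adding and subtracting $\nabla f(x+\beta d)$ inside the term $\lambda_+(\nabla f(\bar x) - v_+)$ yields $(x+\alpha d-\Lambda_+^{-1}v_+) - w = \tau_+(x-\bar x) + p + \lambda_+(\nabla f(x+\beta d) - v_+)$, with $p = \Lambda_+^{-1}[\nabla f(\bar x) - \nabla f(x+\beta d)] + \alpha d$ exactly as in the statement. Setting $a := \tau_+(x-\bar x) + p$ and $b := \lambda_+(\nabla f(x+\beta d) - v_+)$ and expanding $\|a+b\|^2 = \|a\|^2 + 2\iprod{a}{b} + \|b\|^2$ already produces, after rewriting $\tau_+(x-\bar x)+p = -(\tau_+[\bar x - x] - p)$, the cross term $2\lambda_+\iprod{\tau_+[\bar x - x] - p}{v_+ - \nabla f(x+\beta d)}$ and the term $\lambda_+^2\|\nabla f(x+\beta d) - v_+\|^2$ of the claimed inequality verbatim.

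It remains to bound $\|a\|^2$. I would decompose $a = v_1 + v_2 + v_3$ with $v_1 := \tau_+(x-\bar x)$, $v_2 := \lambda_+[\nabla f(\bar x) - \nabla f(x)]$, $v_3 := \lambda_+[\nabla f(x) - \nabla f(x+\beta d)] + \alpha d$, so that $v_2 + v_3 = p$. Assumption {\rm(A.1)} gives $\|v_2\| \le L_f\lambda_+\|\bar x - x\|$ and $\|v_3\| \le (\alpha + L_f\beta\lambda_+)\|d\| = \mu\|d\|$, while $\|v_1\| = \tau_+\|\bar x - x\|$ since $\tau_+ \ge 0$ (which holds for $\lambda_+ \le \theta$). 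Grouping $v_1+v_2$ first,
\[ \|a\|^2 = \|v_1+v_2\|^2 + 2\iprod{v_1}{v_3} + 2\iprod{v_2}{v_3} + \|v_3\|^2, \]
I would bound $\|v_1+v_2\|^2 \le (\tau_+ + L_f\lambda_+)^2\|\bar x-x\|^2 = (\tau_+^2 + 2L_f\lambda_+\tau_+ + L_f^2\lambda_+^2)\|\bar x - x\|^2$ by the triangle inequality, and apply Young's inequality with parameters $\rho_1,\rho_2$ to the two remaining inner products, $2\iprod{v_1}{v_3} \le \rho_1\tau_+^2\|\bar x-x\|^2 + \rho_1^{-1}\mu^2\|d\|^2$ and $2\iprod{v_2}{v_3} \le \rho_2 L_f^2\lambda_+^2\|\bar x-x\|^2 + \rho_2^{-1}\mu^2\|d\|^2$. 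Together with $\|v_3\|^2 \le \mu^2\|d\|^2$ this yields exactly the coefficient $(1+\rho_1)\tau_+^2 + 2L_f\lambda_+\tau_+ + (1+\rho_2)L_f^2\lambda_+^2$ in front of $\|\bar x - x\|^2$ and $1 + \rho_1^{-1} + \rho_2^{-1}$ in front of $\mu^2\|d\|^2$; combining with the previous step completes the proof.

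The main obstacle I anticipate is the bookkeeping in the first step: obtaining the correct fixed-point representation $\bar x = \proxt{\Lambda_+}{\vp}(w)$ with the precise constant $\tau_+$, and tracking the orientation of the residuals carefully enough that the cross term comes out as $\tau_+[\bar x - x] - p$ rather than its negative. This is also where the implicit requirement $\tau_+ \ge 0$ enters, which is needed to obtain the $+2L_f\lambda_+\tau_+$ coefficient instead of $+2L_f\lambda_+|\tau_+|$; the rest is a routine chain of nonexpansiveness, the Lipschitz continuity of $\nabla f$, and Young's inequality.
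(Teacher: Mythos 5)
Your proposal is correct and follows essentially the same route as the paper's proof: rewriting $\bar x = \proxt{\Theta}{\psi}(x)$ as a fixed point of the $\Lambda_+$-forward--backward map, invoking nonexpansiveness of $\proxt{\Lambda_+}{\vp}$, and splitting the resulting argument difference into the $\tau_+[\bar x-x]$ part, the $\lambda_+[\nabla f(\bar x)-\nabla f(x)]$ part, and the $q$-term bounded by $\mu\|d\|$, with Young's inequality applied exactly as you describe. Your remark that the coefficient $2L_f\lambda_+\tau_+$ implicitly requires $\tau_+\ge 0$ applies equally to the paper's own argument and is satisfied in the regime where the lemma is invoked ($\lambda_{k,+}\le \tfrac{1}{6L_f}$, $\theta=\tfrac{1}{3L_f}$).
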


A detailed proof of Lemma \ref{lemma:point-diff} can be found in the appendix in section \ref{sec:app-A1}. We now present the main convergence result of this section.
\begin{theorem} \label{theorem:conv-gen2} Let the random process $({\sf X}^k)_k$ be generated by Algorithm \ref{alg:seqn} using parameter matrices of the form $\Lambda_k = \lambda_k^{-1} I$ and $\Lambda_{k,+} = \lambda_{k,+}^{-1} I$. Suppose that the assumptions {\rm(A.1)}--{\rm(A.2)} and {\rm(B.1)}--{\rm(B.3)} are fulfilled and let us assume that the step sizes $(\lambda_k)_k$, $(\lambda_{k,+})_k$, $(\alpha_k)_k$, and $(\beta_k)_k$ and $\theta$ satisfy the following conditions:   
\be \label{eq:choice-step-21} \theta = \frac{1}{3L_f}, \quad \lambda_{k,+} \leq \frac{1}{6L_f},  \quad \beta_k\nu_k \leq \frac{1}{9L_f}, \quad \alpha_k = L_f\beta_k\lambda_{k,+}, \ee
for all $k \in \N$. Then, under the additional conditions 
\be \label{eq:choice-step-22} \sum \lambda_{k,+} = \infty, \quad \sum \lambda_{k,+} \lambda_k^2 \sigma_k^2 < \infty, \quad \sum \lambda^2_{k,+} \sigma_{k,+}^2 < \infty ,\ee
it follows $\liminf_{k \to \infty} \Exp[\|F^I({\sf X}^k)\|^2] = 0$, $\liminf_{k \to \infty} \| F^I({\sf X}^k) \| = 0$ \as 
Suppose that the output variable ${\sf X} = {\sf X}^{k_*}$ is sampled from the iterates $\{{\sf X}^k : k \in [T]_0\}$ with probability  $\Prob_T(k_* = k) = \frac{\lambda_{k,+}}{\sum_{i=0}^T \lambda_{i,+}}$. Then, we have
\be \label{eq:res-alpha-des}
\Exp[\|F^{I}({\sf X})\|^2] \leq  54 \cdot  \frac{\envt{\Theta}{\psi}(x^{0}) -\psi^* +  \sum_{k=0}^{T} [\frac{\lambda_{k,+}\lambda_k^2\sigma_k^2}{4} + \frac{3L_f\lambda_{k,+}^2\sigma_{k,+}^2}{2}]}{\sum_{k=0}^{T} \lambda_{k,+}}.
\ee
\end{theorem}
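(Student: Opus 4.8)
The plan is to run a martingale-descent argument based on the Moreau envelope $\envt{\Theta}{\psi}$ rather than on $\psi$ itself, combining the elementary envelope inequality \eqref{prop:env-descent} with the bound on $\|p_+ - \bar x\|^2$ from Lemma \ref{lemma:point-diff}. First I would substitute the Lemma \ref{lemma:point-diff} estimate into \eqref{prop:env-descent} (with $x = {\sf X}^k$, $d = {\sf D}^k$, $v = {\sf V}^k$, $v_+ = {\sf V}^k_+$, $p_+ = {\sf X}^{k+1}$, $\lambda_+ = \lambda_{k,+}$, $\theta = 1/(3L_f)$), obtaining
\[
\envt{\Theta}{\psi}({\sf X}^{k+1}) \le \envt{\Theta}{\psi}({\sf X}^k) + \frac{1}{2\theta}\Big[(c_k - 1)\|\bar{\sf X}^k - {\sf X}^k\|^2 + C_k\mu_k^2\|{\sf D}^k\|^2 + (\text{cross term}) + \lambda_{k,+}^2\|\nabla f({\sf Z}^k) - {\sf V}^k_+\|^2\Big],
\]
where $c_k = (1+\rho_1)\tau_{k,+}^2 + 2L_f\lambda_{k,+}\tau_{k,+} + (1+\rho_2)L_f^2\lambda_{k,+}^2$, $\mu_k = \alpha_k + L_f\beta_k\lambda_{k,+}$, and $\tau_{k,+} = 1 - 3L_f\lambda_{k,+}$. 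The first step of the calculation is to choose $\rho_1, \rho_2$ (e.g.\ both constants of order one) and use the bounds $\lambda_{k,+} \le 1/(6L_f)$, $\theta = 1/(3L_f)$ to verify that $1 - c_k \ge c\,\theta^{-1}\lambda_{k,+}$ for some absolute constant $c > 0$ — this is where the envelope contracts by an amount proportional to $\|\bar{\sf X}^k - {\sf X}^k\|^2 = \theta^2\|\nabla\envt{\Theta}{\psi}({\sf X}^k)\|^2$, which by \eqref{prop:gradient-relation} and $L_f\theta = 1/3$ controls $\|F^\Theta({\sf X}^k)\|^2$, hence $\|F^I({\sf X}^k)\|^2$ via the monotonicity result \cite[Lemma 2]{Nes13}.

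Next I would take the conditional expectation with respect to $\mathcal F^{k}$. The cross term $2\lambda_{k,+}\iprod{\tau_{k,+}[\bar{\sf X}^k - {\sf X}^k] - p_k}{{\sf V}^k_+ - \nabla f({\sf X}^k+\beta_k{\sf D}^k)}$ does \emph{not} vanish in conditional expectation because $\bar{\sf X}^k$, $p_k$ and ${\sf Z}^k = {\sf X}^k + \beta_k{\sf D}^k$ all depend on ${\sf D}^k$ which is only $\mathcal F^k$-measurable while the unbiasedness of ${\sf V}^k_+$ is with respect to $\mathcal F^k$ — actually, since $\bar{\sf X}^k, p_k, {\sf Z}^k \in \mathcal F^k$, conditioning on $\mathcal F^k$ and using $\Exp[{\sf V}^k_+ \mid \mathcal F^k] = \nabla f({\sf Z}^k)$ kills it outright; the genuine bias issue is that $p_k$ involves $\nabla f(\bar{\sf X}^k)$, not $\nabla f({\sf Z}^k)$, so I must instead bound the cross term crudely by Young's inequality against $\|\bar{\sf X}^k - {\sf X}^k\|^2$, $\|{\sf D}^k\|^2$, and $\lambda_{k,+}^2\|\nabla f({\sf Z}^k)-{\sf V}^k_+\|^2$, absorbing part of it into the contraction margin. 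Then I apply (B.2) to replace $\Exp[\|{\sf D}^k\|^2\mid\mathcal F^{k-1}_+]$ by $\nu_k^2\Exp[\|F^{\Lambda_k}_{{\sf V}^k}({\sf X}^k)\|^2\mid\mathcal F^{k-1}_+]$, bound $F^{\Lambda_k}_{{\sf V}^k}$ in terms of $F^{\Lambda_k}$ plus $\lambda_k\|\nabla f({\sf X}^k) - {\sf V}^k\|^2$, and use (B.3) for the variance terms. With $\alpha_k = L_f\beta_k\lambda_{k,+}$ we get $\mu_k = 2L_f\beta_k\lambda_{k,+}$, so $\mu_k^2\nu_k^2 = 4L_f^2\lambda_{k,+}^2(\beta_k\nu_k)^2 \le 4L_f^2\lambda_{k,+}^2/(81L_f^2) = \tfrac{4}{81}\lambda_{k,+}^2$; combined with the $\theta^{-1}$ prefactor this shows the $\|{\sf D}^k\|^2$ contribution is dominated by the contraction margin $c\theta^{-1}\lambda_{k,+}\|\bar{\sf X}^k - {\sf X}^k\|^2$ and, for its $F^{\Lambda_k}$ piece, by a term of size $O(\lambda_{k,+}\lambda_k)\|F^{\Lambda_k}({\sf X}^k)\|^2$ which — using $\lambda_k \le \lambda_{k,+} \le 1$ and the monotonicity of $\delta\mapsto\delta^{-1}\|F^{\delta^{-1}I}(x)\|$ — stays below a constant multiple of $\theta^{-2}\lambda_{k,+}\|\bar{\sf X}^k - {\sf X}^k\|^2$. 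Tracking the numerical constants (this is where the $54$, $\tfrac14$, $\tfrac{3L_f}{2}$ in \eqref{eq:res-alpha-des} come from) gives
\[
\Exp[\envt{\Theta}{\psi}({\sf X}^{k+1})\mid\mathcal F^{k-1}_+] \le \envt{\Theta}{\psi}({\sf X}^k) - \frac{\lambda_{k,+}}{54}\Exp[\|F^I({\sf X}^k)\|^2\mid\mathcal F^{k-1}_+] + \frac{\lambda_{k,+}\lambda_k^2\sigma_k^2}{4} + \frac{3L_f\lambda_{k,+}^2\sigma_{k,+}^2}{2}.
\]

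From this recursion the remaining steps are routine: summing over $k \in [T]_0$, using $\envt{\Theta}{\psi} \ge \psi^*$ (which follows from $\envt{\Theta}{\psi}(x) \le \psi(x)$ having the wrong sign — rather, $\inf\envt{\Theta}{\psi} = \inf\psi \ge \psi^*$ directly from the definition), and rearranging yields exactly the sampled-iterate bound \eqref{eq:res-alpha-des}. For the $\liminf$ statements: \eqref{eq:choice-step-22} gives $\sum\lambda_{k,+}\Exp[\|F^I({\sf X}^k)\|^2] < \infty$ (by Theorem \ref{theorem:conv-superm} applied to ${\sf Y}_k = \envt{\Theta}{\psi}({\sf X}^k) - \psi^*$, with the summability of $\sum{\sf B}_k$ guaranteed by \eqref{eq:choice-step-22}), and $\sum\lambda_{k,+} = \infty$ forces $\liminf_k\Exp[\|F^I({\sf X}^k)\|^2] = 0$; Theorem \ref{theorem:conv-superm} also yields $\sum\lambda_{k,+}\|F^I({\sf X}^k)\|^2 < \infty$ a.s., hence $\liminf_k\|F^I({\sf X}^k)\| = 0$ a.s. The main obstacle I anticipate is the bookkeeping in the second paragraph: controlling the non-unbiased cross term and showing that, with the specific constants in \eqref{eq:choice-step-21}, all of $(c_k-1)$, the $\mu_k^2\nu_k^2$ term, and the $\lambda_{k,+}\lambda_k$-weighted $\|F^{\Lambda_k}({\sf X}^k)\|^2$ term fit together so that a clean $-\tfrac{\lambda_{k,+}}{54}\|F^I({\sf X}^k)\|^2$ survives — this requires being careful about which norm ($\|\cdot\|_{\Lambda_k}$ versus $\|\cdot\|$) appears where and repeatedly invoking \cite[Lemma 2]{Nes13} and $\lambda_k\le\lambda_{k,+}$.
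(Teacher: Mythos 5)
Your overall architecture --- descent on the Moreau envelope $\envt{\Theta}{\psi}$ via \eqref{prop:env-descent}, plugged with the bound of Lemma \ref{lemma:point-diff}, then (B.2)--(B.3), telescoping, and the weighted sampling --- is exactly the paper's route. The gap is in your treatment of the cross term, which is the one place where this theorem is delicate. You first (correctly) observe that $\bar{\sf X}^k$, $p_k$ and ${\sf Z}^k$ are all $\mathcal F^k$-measurable, so that $\Exp[{\sf V}^k_+\mid\mathcal F^k]=\nabla f({\sf Z}^k)$ together with the tower property makes the cross term vanish exactly; but you then talk yourself out of it on the grounds that $p_k$ involves $\nabla f(\bar{\sf X}^k)$ rather than $\nabla f({\sf Z}^k)$, and decide to bound the term by Young's inequality instead. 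That objection is not genuine: unbiasedness of ${\sf V}^k_+$ given $\mathcal F^k$ annihilates $\iprod{w}{{\sf V}^k_+-\nabla f({\sf Z}^k)}$ for \emph{any} $\mathcal F^k$-measurable $w$, no matter what $w$ is built from --- this is precisely why Lemma \ref{lemma:point-diff} isolates the noise as ${\sf V}^k_+-\nabla f({\sf Z}^k)$ paired against an $\mathcal F^k$-measurable vector. Worse, the Young's-inequality fallback does not merely lose constants, it breaks the theorem: to absorb the resulting $\|\tau_{k,+}[\bar{\sf X}^k-{\sf X}^k]-p_k\|^2$ piece into the contraction margin, which is only of size $O(\lambda_{k,+})\|\bar{\sf X}^k-{\sf X}^k\|^2$, you must take the Young parameter of order $\lambda_{k,+}$, which inflates the noise piece to $O(\lambda_{k,+})\,\sigma_{k,+}^2$. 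That is not summable under the stated hypothesis $\sum\lambda_{k,+}^2\sigma_{k,+}^2<\infty$ together with $\sum\lambda_{k,+}=\infty$ and merely bounded variances --- and accommodating bounded variances is the entire point of this section.

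A second, smaller issue: taking $\rho_1$ ``of order one'' in Lemma \ref{lemma:point-diff} destroys the contraction, since then $(1+\rho_1)\tau_{k,+}^2\to 1+\rho_1>1$ as $\lambda_{k,+}\to 0$ and $c_{k,1}-1$ is no longer negative. The paper takes $\rho_1=L_f\lambda_{k,+}$ (and $\rho_2=1$), so that $(1+\rho_1)\tau_{k,+}^2$ exceeds $\tau_{k,+}^2$ only by $O(\lambda_{k,+})$ while $1/\rho_1$ multiplies $\mu_k^2=4L_f^2\beta_k^2\lambda_{k,+}^2$, keeping the $\|{\sf D}^k\|^2$ contribution at order $\lambda_{k,+}$; both scalings are needed to reach $c_{k,1}-1=L_f\lambda_{k,+}[-3-L_f\lambda_{k,+}+9L_f^2\lambda_{k,+}^2]<0$ and, after combining with $c_{k,2}$, the clean margin $-\frac{\lambda_{k,+}}{54}\|F^I({\sf X}^k)\|^2$. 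The rest of your outline (the bound $\Exp[\|{\sf D}^k\|^2\mid\mathcal F^{k-1}_+]\le2\nu_k^2[\lambda_k^2\sigma_k^2+\|F^{\Lambda_k}({\sf X}^k)\|^2]$, the use of \eqref{prop:gradient-relation} and \cite[Lemma 2]{Nes13}, the telescoping and sampling step, $\envt{\Theta}{\psi}\ge\psi^*$, and Robbins--Siegmund for the almost sure statement) matches the paper and is fine.
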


\begin{proof} Let us set $\bar{\sf X}^k := \proxt{\Theta}{\psi}({\sf X}^k)$ and $\tau_{k,+} := 1 - \lambda_{k,+}\theta^{-1}$. As before, using the law of total expectation, ${\sf D}^k, {\sf Z}^k \in \mathcal F^k$, ${\sf X}^k,\bar {\sf X}^k \in \mathcal F_+^{k-1}$, and (B.3), it follows
\begin{align*}
&\Exp [\iprod{\alpha_k {\sf D}^k -\Lambda_{k,+}^{-1}[\nabla f(\bar {\sf X}^k) - \nabla f({\sf Z}^k)]}{{\sf V}^k_+ - \nabla f({\sf Z}^k)}\mid \mathcal F^{k-1}_+ ]  \\ & \hspace{20ex} +  \tau_{k,+} \cdot \Exp [\iprod{\bar {\sf X}^k -{\sf X}^k}{{\sf V}^k_+ - \nabla f({\sf Z}^k)}\mid \mathcal F^{k-1}_+ ] = 0  
\end{align*}
almost everywhere. 
Next, utilizing the assumptions (B.2)--(B.3) and $(a+b)^2 \leq 2(a^2 + b^2)$, $a,b \in \R$, we obtain 
\begin{align} \nonumber
\Exp [\|{\sf D}^k \|^2 \mid \mathcal F^{k-1}_+ ] & \leq \nu_k^2 \cdot \Exp[\|F^{\Lambda_k}_{{\sf V}^k}({\sf X}^k)+F^{\Lambda_k}({\sf X}^k)-F^{\Lambda_k}({\sf X}^k)\|^2\mid \mathcal F^{k-1}_+ ] \\ 
&\leq 2 \nu_k^2 \cdot [ \Exp[\|F^{\Lambda_k}_{{\sf V}^k}({\sf X}^k) -F^{\Lambda_k}({\sf X}^k)\|^2\mid \mathcal F^{k-1}_+] +  \|F^{\Lambda_k}({\sf X}^k)\|^2 ] \nonumber \\ 
& \leq 2\nu_k^2 \cdot [ \lambda_k^2\sigma_k^2 +  \|F^{\Lambda_k}({\sf X}^k)\|^2]. \label{prop:upper-d}
\end{align}
We now apply Lemma \ref{lemma:point-diff} with $\rho_1 \equiv \rho_{k,1} = L_f \lambda_{k,+}$,  $\mu \equiv \mu_k = \alpha_k + L_f \beta_k \lambda_{k,+} = 2 L_f \beta_k \lambda_{k,+}$, $\rho_2 = 1$. Taking conditional expectation in Lemma \ref{lemma:point-diff}, it holds that
\begin{align*} \Exp[\|\bar{\sf X}^k - {\sf X}^{k+1}\|^2 \mid \mathcal F^{k-1}_+] & \\ &\hspace{-15ex} \leq c_{k,1} \|\bar {\sf X}^k - {\sf X}^k\|^2  + c_{k,2} [ \lambda_k^2\sigma_k^2 +  \|F^{\Lambda_k}({\sf X}^k)\|^2] + \lambda_{k,+}^2\sigma_{k,+}^2, \end{align*}
where $c_{k,1} = (1+\rho_{k,1})\tau_{k,+}^2 + 2L_f \lambda_{k,+}\tau_{k,+} + 2L_f^2\lambda_{k,+}^2$ and $c_{k,2} = 2 \nu_k^2 \mu_k^2 (2 + \rho_{k,1}^{-1})$. Moreover, the choice of the parameters and \eqref{eq:choice-step-21} imply
\[ c_{k,1} - 1 = L_f\lambda_{k,+} [-3 -L_f \lambda_{k,+} + 9L_f^2 \lambda_{k,+}^2] < 0.\] 
%
Thus, taking expectation in \eqref{prop:env-descent} and using $ \theta\|F^{I}(x)\| \leq \|F^{\Theta}(x)\|$ and $\|F^{\Lambda_k}(x)\| \leq \|F^{I}(x)\|$ (see again \cite{Nes13}) and \eqref{prop:gradient-relation}, this yields  
\begin{align*}
&\Exp[ \envt{\Theta}{\psi}({\sf X}^{k+1}) \mid \mathcal F^{k-1}_+ ] -\envt{\Theta}{\psi}({\sf X}^k) \\
&\hspace{2ex} \leq \frac{1}{2\theta} \left[ (c_{k,1}-1)  \|\bar {\sf X}^k - {\sf X}^k\|^2 + c_{k,2} [ \lambda_k^2\sigma_k^2 + \|F^{\Lambda_k}({\sf X}^k)\|^2] +\lambda_{k,+}^2 \sigma_{k,+}^2\right] \\ & \hspace{2ex} \leq \frac{1}{2\theta} \left[ \left[ (1-L_f\theta)^2\theta^2 (c_{k,1}-1) + c_{k,2} \right] \|F^I({\sf X}^k)\|^2 + c_{k,2} \lambda_k^2\sigma_k^2 +  \lambda_{k,+}^2\sigma_{k,+}^2 \right].
\end{align*}
Together with $c_{k,2} = 8L_f \beta_k^2\nu_k^2 \lambda_{k,+}(2L_f \lambda_{k,+}+1) \leq \frac{4\lambda_{k,+}}{81L_f} (4L_f \lambda_{k,+}+2)$, we have
\[ \frac{4}{81L_f^2} (c_{k,1}-1) + c_{2,k} \leq \frac{4\lambda_{k,+}}{81L_f} [ - 1  + 3 L_f \lambda_{k,+} + 9 L_f^2 \lambda_{k,+}^2 ] \leq -\frac{\lambda_{k,+}}{81L_f}. \]
Combining the latter estimate and $(2\theta)^{-1}c_{k,2} \leq \frac{\lambda_{k,+}}{4}$, we finally obtain

\begin{align*}
\Exp [ \envt{\Theta}{\psi}({\sf X}^{k+1}) \mid \mathcal F^{k-1}_+ ] -\envt{\Theta}{\psi}({\sf X}^k)  & \\ & \hspace{-15ex} \leq -\frac{\lambda_{k,+}}{54}\|F^{I}({\sf X}^k)\|^2+ \frac14 \lambda_{k,+}\lambda_k^2\sigma_k^2 + \frac{3L_f}{2}\lambda_{k,+}^2 \sigma_{k,+}^2.
\end{align*}
Taking expectation, summing the inequality, and using assumption (A.2), we have 
\begin{align*}
 \sum_{k=0}^{T} \lambda_{k,+}\Exp[\|F^{I}({\sf X}^k)\|^2]\leq 54  \left[ \envt{\Theta}{\psi}(x^{0}) - \psi^*+ \sum_{k=0}^{T} \frac{\lambda_{k,+}\lambda_k^2\sigma_k^2}{4}+ \frac{3L_f\lambda_{k,+}^2\sigma_{k,+}^2}{2} \right] 
\end{align*}
for all $T \in \N$. As in the proof of Theorem \ref{theorem:conv-gen}, this establishes the first and second statement in Theorem \ref{theorem:conv-gen2}. 
In addition, if the output variable ${\sf X} = {\sf X}^{k_*}$ is sampled from the iterates $\{{\sf X}^k : k \in [T]_0 \}$ with probability $\Prob_T(k = k_*) = \frac{\lambda_{k,+}}{\sum_{i=0}^T \lambda_{i,+}}$, we get
\begin{align*}
\Exp[\|F^{I}({\sf X})\|^2] = \left[{\sum_{k=0}^{T} \lambda_{k,+}} \right]^{-1} \sum_{k=0}^{T} \lambda_{k,+}\Exp[\|F^{I}({\sf X}^k)\|^2].
\end{align*}
The complexity bound \eqref{eq:res-alpha-des} now follows easily from the last two results.
\end{proof}

\subsection{The Stochastic Extragradient Method with Variance Reduction} \label{section:extra-vr}
In this section, motivated by the general success and recent algorithmic and theoretical advancements of SVRG-type stochastic optimization methods, see, e.g., \cite{AllHaz16,RedHefSraPocSmo16,RedSraPocSmo16,ZhaRedSra16,WanMaGolLiu17,LiuSoWu18,PooLiaSch18}, we investigate the convergence of a variant of the stochastic extra-step scheme using variance reduction. The approach is designed for empirical risk minimization problems, where $f$ takes the form
\[ f(x) = \frac{1}{N} \sum_{i=1}^N f_i(x), \quad x \in \Rn, \quad N \in \N. \]
Following the variance reduction strategies developed in \cite{JohZha13,XiaZha14,AllHaz16,RedHefSraPocSmo16,RedSraPocSmo16}, we utilize mini-batch-type stochastic gradients $\nabla f_{\mathcal S}(x) := |\mathcal S|^{-1} \sum_{i \in \mathcal S} \nabla f_i(x)$ and periodically updated full gradient information to build a stochastic oracle of $\nabla f$. Here, $\mathcal S$ is a randomly chosen subset $\mathcal S \subseteq [N]$. The complete method is shown in Algorithm \ref{alg:inex-qnm}.

\begin{algorithm2e}[t]
\caption{\textbf{S}toch. \textbf{E}xtra-Step \textbf{Q}uasi-\textbf{N}ewton Method with {\bf V}ar. {\bf R}eduction}    
\label{alg:inex-qnm}
\lnlset{alg:ssn-1}{1}{Initialization: ~~Choose $x^0 \in \dom~\vp$ and select $(\Lambda^m_k), (\Lambda^m_{k,+}) \subset \Spp$ and $(K_m) \subset \N$. Choose the step and mini-batch sizes $(\alpha_k^m)$ $(\beta_k^m)$, $(b_m)$, and $(b_{m,+})$ and set $x^{-1}_{K_{-1}} = x^0$.} \\ 
\For{$m = 0,1,..., M$}{
\lnlset{alg:ssn-2}{2}{Update $\tilde x^m = x^{m-1}_{K_{m-1}}$ and $g^m = \nabla f(\tilde x^{m})$ and set $x^m_0 = \tilde x^m$.} \\
\For{$k = 0,...,K_m-1$}{
\lnlset{alg:ssn-3}{3}{Choose a random sample set $\mathcal S_k^m \subset [N]$ with $|\mathcal S_k^m| = b_m$ and compute the oracle $v_k^m = \nabla f_{\mathcal S_k^m}(x^m_k) - \nabla f_{\mathcal S^m_k}(\tilde x^m) + g^m$.} \\
\lnlset{alg:ssn-4}{4}{Select $d^m_k$ and calculate $z^m_k = x^m_k + \beta_k^m d^m_k$ and the new oracle $v^m_{k,+} = \nabla f_{\mathcal S^m_{k,+}}(z^m_k) - \nabla f_{\mathcal S^m_{k,+}}(\tilde x^m) + g^m$ with rate $|\mathcal S^m_{k,+}| = b_{m,+}$.} \\ 
\lnlset{alg:ssn-5}{5}{Perform the update $x^m_{k+1} = \proxt{\Lambda^m_{k,+}}{\vp}(x^m_k + \alpha_k^m d^m_k - (\Lambda^m_{k,+})^{-1}v^m_{k,+})$.} \\
}} 
\end{algorithm2e}

We now discuss the convergence properties of Algorithm \ref{alg:inex-qnm} and of the generated stochastic process of iterates $({\sf X}^m_k)_{k,m}$. In the following, we interpret the sample sets $\mathcal S_k^m$ and $\mathcal S_{k,+}^m$, which are randomly selected  in step \ref{alg:ssn-3} and \ref{alg:ssn-4} of Algorithm \ref{alg:inex-qnm}, as realizations of the collections of random variables
\[ {\sf S}^m_k := \{({\sf S}^m_k)_1,...,({\sf S}^m_k)_{b_m}\} \quad \text{and} \quad {\sf S}_{k,+}^m := \{({\sf S}^m_{k,+})_1,...,({\sf S}^m_{k,+})_{b_{m,+}}\} \] 
with $({\sf S}^m_k)_i: \Omega \to [N]$, $({\sf S}^m_{k,+})_j: \Omega \to [N]$ for all $i \in [b_m]$, $j \in [b_{m,+}]$. We can then define the filtrations
\[ \mathcal F^m_k = \sigma({\sf S}^0_0, {\sf S}^0_{0,+},...,{\sf S}^0_{K_0-1,+}, {\sf S}^1_{0},...,{\sf S}^m_{k-1,+},{\sf S}^m_k) \;\; \text{and} \;\; {\mathcal F}_{k,+}^m = \sigma(\mathcal F_k \cup \sigma({\sf S}^m_{k,+})) \]
with $\mathcal F_{-1,+}^m := \mathcal F^{m-1}_{K_{m-1}-1,+}$. Next, we state a modified version of Assumption \ref{ass:lip} and \ref{ass:stoch-1} characterizing the stochastic behavior of the chosen directions $(d^m_k)_{k,m}$ and their associated stochastic process $({\sf D}^m_k)_{k,m}$.
\begin{assumption} \label{ass:wk} Let $({\sf D}_k^m)_{k,m}$ be generated by Algorithm \ref{alg:inex-qnm}. For all $k,m$, we assume: 
\setlength{\leftmargini}{7ex}
\begin{itemize}
\item[{\rm(C.1)}] Each of the component functions $\nabla f_i$, $i \in [N]$, is $L_i$-Lipschitz continuous. \vspace{.5ex} 
\item[{\rm(C.2)}] The mapping ${\sf D}^m_k : \Omega \to \R^{n}$ is an $\mathcal F^m_k$-measurable function. \vspace{.5ex} 
\item[{\rm(C.3)}] There exists $\nu_k^m > 0$ such that $\Exp[\|{\sf D}^m_k\|^2 \mid \mathcal F_{-1,+}^m] \leq (\nu_k^m)^2 \Exp[\|F^{\Lambda_k^m}_{v_k^m}({\sf X}^m_k)\|^2 \mid  \mathcal F_{-1,+}^m]$ almost everywhere.
\item[{\rm(C.4)}] It holds $\Exp[{\sf V}^m_k \mid \mathcal F^{m}_{k-1,+}] = \nabla f({\sf X}^m_k)$ and $\Exp[{\sf V}^m_{k,+} \mid \mathcal F^{m}_k] = \nabla f({\sf Z}^m_k)$ \aev
\end{itemize}
\end{assumption}

In addition to (A.1) and (C.1), we define the uniform Lipschitz constant $L := \max_{i \in [N]} L_i$. Then, it holds that $L_f \leq \frac{1}{N} \sum_{i} L_i \leq L$. Let us note that condition (C.3) is a slightly stronger variant of assumption (B.2) since a coarser $\sigma$-algebra is used in the conditioning. Furthermore, under (C.2), the structure of Algorithm \ref{alg:inex-qnm} implies that the processes $({\sf X}^m_k)_{k,m}$ and $({\sf Z}^m_k)_{k,m}$ are adapted to $\mathcal F^m_k$ and ${\mathcal F}^m_{k,+}$, i.e., we have
\be \label{eq:prop-proc} {\sf X}^m_0 \in \mathcal F_{-1,+}^m, \quad {\sf X}^m_k \in {\mathcal F}^m_{k-1,+}, \quad {\sf Z}^m_0 \in \mathcal F_0^m, \quad {\sf Z}^m_k \in \mathcal F^m_k, \ee
and $\tilde{\sf X}^m \in  \mathcal F_{-1,+}^m$ for all $k \in [K_m]$ and all $m \in \N_0$. 

We now present our main global convergence results for the case $M \to \infty $. 

\begin{theorem} \label{theorem:complex} Let the stochastic process of iterates $({\sf X}_k^m)_{k,m}$ be generated by Algorithm \ref{alg:inex-qnm} using the parameter matrices $\Lambda_k^m \equiv (\lambda^m_k)^{-1} I$, $\Lambda^m_{k,+} \equiv (\lambda^m_{k,+})^{-1}I $ 
and suppose that the assumptions {\rm(A.1)}--{\rm(A.2)} and {\rm (C.1)}--{\rm(C.4)} are satisfied. Let us further assume that the step sizes are chosen as follows: 
\[ \lambda^m_{k,+} \equiv \lambda^m_+ \leq \frac{1}{2L_f+L \cdot \tau_m}, \quad \lambda^m_k \leq \frac{(1-\bar\rho)\lambda_+^m}{1+(\mu_k^m)^2 + (L\beta_k^m\nu_k^m\lambda_+^m)^2 K_m b_{m,+}^{-1}}, \]
where $\tau_m := K_m(b_{m,+}^{-1} + b_m^{-1})^\half$,  $\mu_k^m := (\alpha_k^m + L_f \beta_k^m \lambda_{+}^m)\nu_k^m$,  and $\bar\rho \in [0,1)$ is a given constant. Then, $(\psi(\tilde{\sf X}^m))_m$ almost surely converges to a random variable ${\sf Y}^*$. Additionally, if there exist $ \underline{\rho} > 0$ and $\kappa > 0$ such that
\be \label{eq:step-choice-vr} \sum_{m=0}^\infty K_m^2 \left[ \sum_{k=0}^{K_m -1} \frac{1}{\lambda^m_k} \right]^{-1} = \infty,  \quad \underline{\rho} \lambda_+^m \leq K_m \left[ \sum_{k=0}^{K_m-1} \frac{1}{\lambda_k^m} \right]^{-1}, \ee
and $K_m\lambda_+^m \leq \kappa$ for all $m \in \N$, we have $ K_m^{-1} \sum_{k=0}^{K_m-1} \|F^I({\sf X}^m_k)\| \to 0$ almost surely and $ K_m^{-1} \sum_{k=0}^{K_m-1} \Exp[\|F^I({\sf X}^m_k)\|] \to 0$ as $m \to \infty$.
\end{theorem}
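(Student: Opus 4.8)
The plan is to derive a supermartingale‑type descent inequality for the snapshot values $(\psi(\tilde{\sf X}^m))_m$, to apply the Robbins--Siegmund theorem (Theorem~\ref{theorem:conv-superm}) in order to obtain both its almost sure convergence and a family of summable quantities, and finally to promote the resulting $\liminf$‑statements for $F^I$ to genuine limits by a trust‑region‑type argument in the spirit of Theorem~\ref{theorem:gen-conv-special}. The structural feature that drives everything is that the SVRG oracles of steps~\ref{alg:ssn-3}--\ref{alg:ssn-4} carry no irreducible noise: using (C.1) and the explicit form ${\sf V}_k^m = \nabla f_{{\sf S}_k^m}({\sf X}_k^m) - \nabla f_{{\sf S}_k^m}(\tilde{\sf X}^m) + \nabla f(\tilde{\sf X}^m)$, and with $L = \max_i L_i$ as above, one has the standard variance bounds $\Exp[\|\nabla f({\sf X}_k^m) - {\sf V}_k^m\|^2 \mid \mathcal F_{k-1,+}^m] \le \tfrac{L^2}{b_m}\|{\sf X}_k^m - \tilde{\sf X}^m\|^2$ and $\Exp[\|\nabla f({\sf Z}_k^m) - {\sf V}_{k,+}^m\|^2 \mid \mathcal F_k^m] \le \tfrac{L^2}{b_{m,+}}\|{\sf Z}_k^m - \tilde{\sf X}^m\|^2$, i.e., the gradient errors are controlled by the distance of the current point to the snapshot $\tilde{\sf X}^m = {\sf X}_0^m$ rather than by a fixed variance level.

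First I would apply Lemma~\ref{lemma:prox-descent} at every inner step $(m,k)$ with $x = {\sf X}_k^m$, $d = {\sf D}_k^m$, $\Lambda = \Lambda_k^m$, $\Lambda_+ = \Lambda_{k,+}^m$, $\alpha = \alpha_k^m$, $\beta = \beta_k^m$, $v = {\sf V}_k^m$, $v_+ = {\sf V}_{k,+}^m$, $p_+ = {\sf X}_{k+1}^m$ and $\rho = (\lambda_k^m)^{-1}$, which eliminates the $\|p^\Lambda_v - p^\Lambda\|^2$‑term. Passing to $\Exp[\cdot \mid \mathcal F_k^m]$ annihilates the inner‑product term via (C.4), after which I would sum over $k = 0,\ldots,K_m-1$, take $\Exp[\cdot \mid \mathcal F_{-1,+}^m]$, telescope the left‑hand side to $\psi(\tilde{\sf X}^{m+1}) - \psi(\tilde{\sf X}^m)$, and bound the distances to the snapshot by $\|{\sf X}_k^m - \tilde{\sf X}^m\|^2 \le k\sum_{j<k}\|{\sf X}_{j+1}^m - {\sf X}_j^m\|^2$ and $\|{\sf Z}_k^m - \tilde{\sf X}^m\|^2 \le (k+1)[\sum_{j<k}\|{\sf X}_{j+1}^m - {\sf X}_j^m\|^2 + (\beta_k^m)^2\|{\sf D}_k^m\|^2]$, together with (C.3). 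With $\tau_m = K_m(b_{m,+}^{-1}+b_m^{-1})^{1/2}$, the bound $\lambda_+^m \le (2L_f + L\tau_m)^{-1}$ forces the coefficient of $\sum_j\|{\sf X}_{j+1}^m - {\sf X}_j^m\|^2$ to become $\le -L_f < 0$ once the accumulated $v_k^m$‑ and $v_{k,+}^m$‑variances are absorbed, while the stated bound on $\lambda_k^m$ — whose denominator is precisely $1 + (\mu_k^m)^2 + (L\beta_k^m\nu_k^m\lambda_+^m)^2 K_m b_{m,+}^{-1}$ — is tailored to absorb the $\ell$‑term of Lemma~\ref{lemma:prox-descent} and the (now $K_m$‑amplified) $\|{\sf D}_k^m\|^2$‑contribution into the negative $\|F_{{\sf V}_k^m}^{\Lambda_k^m}({\sf X}_k^m)\|^2$‑term, leaving a margin of order $\bar\rho/\lambda_k^m$. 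Since no residual noise term survives, one arrives at
\begin{align*}
\Exp[\psi(\tilde{\sf X}^{m+1}) \mid \mathcal F_{-1,+}^m] &\le \psi(\tilde{\sf X}^m) - L_f\,\Exp[{\textstyle\sum_j}\|{\sf X}_{j+1}^m - {\sf X}_j^m\|^2 \mid \mathcal F_{-1,+}^m] \\
&\quad - {\textstyle\sum_k}\tfrac{1}{\lambda_k^m}\Exp[\|F^{\Lambda_k^m}({\sf X}_k^m)\|^2 + \bar\rho\,\|F_{{\sf V}_k^m}^{\Lambda_k^m}({\sf X}_k^m)\|^2 \mid \mathcal F_{-1,+}^m].
\end{align*}

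Applying Theorem~\ref{theorem:conv-superm} with ${\sf Y}_m = \psi(\tilde{\sf X}^m) - \psi^*$ (nonnegative by (A.2)), ${\sf A}_m = {\sf B}_m = 0$, and ${\sf U}_m$ equal to the $\mathcal F_{-1,+}^m$‑conditional expectation of the subtracted terms, gives the almost sure convergence $\psi(\tilde{\sf X}^m) \to {\sf Y}^*$ — the first assertion, which does not use \eqref{eq:step-choice-vr} — and $\sum_m {\sf U}_m < \infty$ almost surely; taking expectations and using Tonelli's theorem then yields $\sum_{m,j}\|{\sf X}_{j+1}^m - {\sf X}_j^m\|^2 < \infty$ almost surely (a probabilistic Ostrowski‑type condition) and $\sum_{m,k}\tfrac{1}{\lambda_k^m}\|F^{\Lambda_k^m}({\sf X}_k^m)\|^2 < \infty$ almost surely, plus the corresponding statements in expectation. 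Using $\|F^{\Lambda_k^m}(x)\| \ge \lambda_k^m\|F^I(x)\|$ (from \cite[Lemma~2]{Nes13}, valid since $\lambda_k^m \le \lambda_+^m \le 1$) it follows that $\sum_{m,k}\lambda_k^m\|F^I({\sf X}_k^m)\|^2 < \infty$ almost surely, while the AM--HM inequality gives $\sum_k\lambda_k^m \ge K_m^2(\sum_k 1/\lambda_k^m)^{-1}$, so that $\sum_{m,k}\lambda_k^m = \infty$ by the first part of \eqref{eq:step-choice-vr}. Combining these two facts with the continuity of $F^I$ yields $\liminf\|F^I({\sf X}_k^m)\| = 0$ almost surely and $\liminf\Exp[\|F^I({\sf X}_k^m)\|^2] = 0$ along the flattened sequence of inner iterates, which is the second group of claims.

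To finally upgrade these to $K_m^{-1}\sum_k\|F^I({\sf X}_k^m)\| \to 0$ I would run the trust‑region argument of Theorem~\ref{theorem:gen-conv-special} (cf.\ \cite[Theorem~6.4.6]{ConGouToi00}) on the lexicographically ordered iterate sequence, which is continuous across outer iterations since ${\sf X}_0^{m+1} = \tilde{\sf X}^{m+1} = {\sf X}_{K_m}^m$: assuming $\|F^I\|$ fails to converge to zero on a positive‑probability event, one extracts crossing runs on which $\|F^I\|$ stays above a fixed gap and derives a contradiction from $\sum_{m,j}\|{\sf X}_{j+1}^m - {\sf X}_j^m\|^2 < \infty$, the Lipschitz continuity of $F^I$ with modulus $2+L_f$, the relation $\underline\rho\lambda_+^m \le K_m(\sum_k 1/\lambda_k^m)^{-1}$, and the uniform bound $K_m\lambda_+^m \le \kappa$, the last two keeping the accumulated movement along such a run small relative to the guaranteed decrease; this gives $\max_k\|F^I({\sf X}_k^m)\| \to 0$, hence convergence of the inner‑loop averages, and $K_m^{-1}\sum_k\Exp[\|F^I({\sf X}_k^m)\|] \to 0$ follows by taking expectations in the same estimates and invoking Jensen's inequality. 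The main obstacle is precisely this last step: in contrast to the single‑loop setting of Theorem~\ref{theorem:gen-conv-special}, the per‑block Ostrowski term here comes only with the constant $L_f$ rather than a weight $1/\lambda_+^m$, and the quantity to be controlled is an inner‑loop average, so the crossing‑run bookkeeping must be carried out carefully across outer iterations, with the conditions in \eqref{eq:step-choice-vr} and $K_m\lambda_+^m \le \kappa$ playing exactly the role of bounding the movement in terms of the decrease.
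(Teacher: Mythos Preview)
Your overall strategy—applying Lemma~\ref{lemma:prox-descent} at each inner step, using the SVRG variance bounds (Lemma~\ref{lemma:sto-err-bound}) to couple the gradient errors to $\|{\sf X}_k^m - \tilde{\sf X}^m\|^2$, telescoping over the inner loop, invoking Robbins--Siegmund, and then running a trust-region crossing argument—is exactly the paper's approach. Your choice $\rho = (\lambda_k^m)^{-1}$ is even a slight simplification of the paper's $\rho = ((1+\nu)\lambda_+^m)^{-1}$ and causes no trouble.

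There is, however, a genuine gap in the last step, and it is precisely the point you yourself flag as the ``main obstacle'': you claim that after absorbing the accumulated variance terms the coefficient in front of $\sum_j\|{\sf X}_{j+1}^m - {\sf X}_j^m\|^2$ is only $\le -L_f$. In fact the step-size condition $\lambda_+^m \le (2L_f + L\tau_m)^{-1}$ is designed to make
\[
L_f - \frac{1}{\lambda_+^m} + \tfrac{1}{2}K_m(K_m-1)\theta(\lambda_+^m)\;\le\; -\frac{1}{2\lambda_+^m},
\]
so you actually obtain the \emph{weighted} Ostrowski condition $\sum_{m,k}(\lambda_+^m)^{-1}\Exp[\|{\sf X}_{k+1}^m - {\sf X}_k^m\|^2] < \infty$ (this is the paper's \eqref{eq:another-eq}). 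With only the unweighted bound your crossing-run argument does not close: the Cauchy--Schwarz step on a run $\{t_i,\ldots,\ell_i\}$ requires controlling $\sum_r K_r\lambda_+^r$, which in turn needs the $(\lambda_+^r)^{-1}$-weighted square sum in the second factor; since $\lambda_+^m$ may tend to zero as $\tau_m \to \infty$, the unweighted sum does not suffice. Once you keep the $-(2\lambda_+^m)^{-1}$ coefficient, your obstacle disappears and the argument goes through exactly as you outline.

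A secondary difference: the paper runs the trust-region argument directly on the averaged sequence $y_m := K_m^{-1}\sum_k \Exp[\|F^I({\sf X}_k^m)\|]$ (and its almost-sure counterpart), using the reverse H\"older inequality to first obtain $\liminf_m y_m = 0$. You propose instead to work on the flattened iterate sequence and deduce convergence of the inner-loop averages from $\max_k\|F^I({\sf X}_k^m)\| \to 0$. Both are viable once the weighted Ostrowski bound is available, but working with the averages $y_m$ keeps the bookkeeping across outer iterations somewhat simpler.
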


The proof of Theorem \ref{theorem:complex} combines the theoretical properties of the variance reduction technique discussed in \cite{RedHefSraPocSmo16,RedSraPocSmo16} and the strategies utilized in the proofs of Theorem \ref{theorem:conv-gen} and \ref{theorem:gen-conv-special}. 
We first present a preparatory lemma that was shown by Reddi et $\text{al.}$ in \cite{RedHefSraPocSmo16} and \cite[Lemma 3]{RedSraPocSmo16} and provides an upper bound for the error terms $\|\nabla f(x^m_k)-v^m_k\|^2$ and $\|\nabla f(z^m_k)-v_{k,+}^m\|^2$. Lemma \ref{lemma:sto-err-bound} is primarily a consequence of the measurability properties \eqref{eq:prop-proc} and the tower property of the expectation. 

\begin{lemma} \label{lemma:sto-err-bound} Let $({\sf X}^m_k)_{k,m}$, $({\sf Z}^m_k)_{k,m}$, and $(\tilde{\sf X}^m)_m$ be generated by Algorithm \ref{alg:inex-qnm} and suppose that {\rm(A.1)}, {\rm(C.1)}--{\rm(C.2)}, and {\rm(C.4)} are satisfied. For all $k \in [K_m-1]_0$ and $m \in \N_0$, we have 
\[ \Exp[\|\nabla f({\sf X}^m_k) - {\sf V}^m_k\|^2 \mid \mathcal F^m_{-1,+}] \leq \frac{L^2}{b_m} \Exp[ \| {\sf X}^m_k - \tilde{\sf X}^m \|^2 \mid \mathcal F_{-1,+}^m] \] 
and $\Exp[\|\nabla f({\sf Z}^m_k) - {\sf V}_{k,+}^m\|^2 \mid \mathcal F^m_{-1,+}] \leq \frac{L^2}{b_{m,+}} \Exp[ \| {\sf Z}^m_k - \tilde {\sf X}^m \|^2 \mid\mathcal F_{-1,+}^m]$ \aev
\end{lemma}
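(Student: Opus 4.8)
The plan is to establish both inequalities by the same argument, carrying out the first one in detail and indicating the trivial modifications for the second. The central idea --- as hinted by the reference to \eqref{eq:prop-proc} and the tower property --- is to condition on a \emph{finer} $\sigma$-algebra than the one appearing in the statement. First I would fix $k$ and $m$ and condition on $\mathcal F^m_{k-1,+}$: by the adaptedness property \eqref{eq:prop-proc}, both ${\sf X}^m_k$ and $\tilde{\sf X}^m$ are $\mathcal F^m_{k-1,+}$-measurable, whereas the mini-batch ${\sf S}^m_k$, and hence the oracle ${\sf V}^m_k$, is drawn independently of the history. Since $\mathcal F^m_{-1,+}\subseteq\mathcal F^m_{k-1,+}$, a final application of the tower property will transfer the resulting bound to the coarser conditioning on $\mathcal F^m_{-1,+}$ required in the lemma.

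Write ${\sf S}^m_k=\{\zeta_1,\dots,\zeta_{b_m}\}$ with $\zeta_j:=({\sf S}^m_k)_j$ i.i.d.\ uniform on $[N]$ and independent of $\mathcal F^m_{k-1,+}$. Recalling $g^m=\nabla f(\tilde{\sf X}^m)$ and $\nabla f_{\mathcal S}(x)=|\mathcal S|^{-1}\sum_{i\in\mathcal S}\nabla f_i(x)$, the oracle is the empirical mean ${\sf V}^m_k=b_m^{-1}\sum_{j=1}^{b_m}{\sf Y}_j$ of the conditionally i.i.d.\ vectors ${\sf Y}_j:=\nabla f_{\zeta_j}({\sf X}^m_k)-\nabla f_{\zeta_j}(\tilde{\sf X}^m)+g^m$. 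Since $\Exp[\nabla f_{\zeta_j}(x)\mid\mathcal F^m_{k-1,+}]=\frac1N\sum_{i=1}^N\nabla f_i(x)=\nabla f(x)$ for every $\mathcal F^m_{k-1,+}$-measurable $x$, each ${\sf Y}_j$ has conditional mean $\nabla f({\sf X}^m_k)-\nabla f(\tilde{\sf X}^m)+g^m=\nabla f({\sf X}^m_k)$, consistent with the first identity in (C.4). Hence, using $\Exp[\|{\sf W}-\Exp{\sf W}\|^2]=\Exp[\|{\sf W}\|^2]-\|\Exp{\sf W}\|^2$ together with the independence of the ${\sf Y}_j$ --- so that the variance of their mean scales like $1/b_m$ and the deterministic shift $g^m$ cancels --- I would obtain
\[ \Exp[\|\nabla f({\sf X}^m_k)-{\sf V}^m_k\|^2\mid\mathcal F^m_{k-1,+}]\le \frac{1}{b_m}\,\Exp[\|\nabla f_{\zeta_1}({\sf X}^m_k)-\nabla f_{\zeta_1}(\tilde{\sf X}^m)\|^2\mid\mathcal F^m_{k-1,+}]. \]
Averaging the right-hand side over the uniform index $\zeta_1$ and invoking the componentwise Lipschitz bound (C.1) with $L=\max_{i\in[N]}L_i$, the remaining conditional expectation equals $\frac1N\sum_{i=1}^N\|\nabla f_i({\sf X}^m_k)-\nabla f_i(\tilde{\sf X}^m)\|^2\le L^2\|{\sf X}^m_k-\tilde{\sf X}^m\|^2$. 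Combining these two estimates and then applying $\Exp[\,\cdot\mid\mathcal F^m_{-1,+}]$ to both sides (tower property, $\mathcal F^m_{-1,+}\subseteq\mathcal F^m_{k-1,+}$) yields the first asserted inequality. The second one follows verbatim with ${\sf Z}^m_k$ in place of ${\sf X}^m_k$, conditioning on $\mathcal F^m_k$ --- with respect to which ${\sf Z}^m_k$ is measurable by \eqref{eq:prop-proc} --- and using the mini-batch size $b_{m,+}$ and the second identity in (C.4).

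I expect the only genuinely delicate point to be the measurability and independence bookkeeping underlying the inner conditioning: one must verify that the sample set ${\sf S}^m_k$ is drawn independently of $\mathcal F^m_{k-1,+}$, so that ${\sf X}^m_k$, $\tilde{\sf X}^m$ and $g^m$ genuinely act as constants there, and that the indices are sampled \emph{with replacement}, which is what makes the identity ``variance of the empirical mean equals $b_m^{-1}$ times the single-sample variance'' exact. Under sampling without replacement the same bound still holds (with a harmless factor $\le1$), so the conclusion is unaffected. The remaining ingredients --- the bias-variance decomposition, the Lipschitz estimate, and the tower property --- are routine.
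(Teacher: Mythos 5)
Your argument is correct and is precisely the one the paper has in mind: the paper does not spell out a proof but attributes the lemma to Reddi et al.\ (their Lemma~3), whose proof is exactly your conditioning on the finer $\sigma$-algebra, the bias--variance decomposition of the mini-batch mean, the componentwise Lipschitz bound, and the tower property. Your remark about with- versus without-replacement sampling is a valid and harmless refinement.
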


We now turn to the proof of Theorem \ref{theorem:complex}.

\begin{proof} We proceed as in the proof of Theorem \ref{theorem:conv-gen} and first derive an approximate descent condition for the iterates ${\sf X}_k^m$. Due to ${\sf X}_k^m,{\sf D}_k^m, {\sf Z}_k^m \in \mathcal F_k^m$, $\mathcal F_{-1,+}^m \subset \mathcal F_k^m$, the law of total expectation, and the unbiasedness of ${\sf V}^m_{k,+}$, it follows
\begin{align*} \Exp[\iprod{\Exp[\nabla f({\sf Z}_k^m) - {\sf V}_{k,+}^m \mid \mathcal F_{k}^m]}{\lambda^m_{+}(\nabla f({\sf X}_k^m) - \nabla f({\sf Z}_k^m)) +\alpha_k^m {\sf D}_k^m} \mid \mathcal F_{-1,+}^m] = 0.  
\end{align*}
%
Taking conditional expectation in Lemma \ref{lemma:prox-descent} and applying Lemma \ref{lemma:sto-err-bound} and $\|{\sf Z}^m_k-\tilde {\sf X}^m\|^2 \leq (1+\frac{1}{\nu})(\beta^m_k)^2 \|{\sf D}^m_k\|^2 + (1+\nu) \|{\sf X}^m_k-\tilde {\sf X}^m\|^2$ (for all $\omega$), we obtain
\begingroup
\allowdisplaybreaks
\begin{align*} \Exp[\psi({\sf X}^m_{k+1}) - \psi({\sf X}^m_k) \mid \mathcal F^m_{-1,+}] & \\  & \hspace{-22ex} \leq   \half  \Exp\left[ {\textstyle\left[ L_f - \frac{1}{\lambda^m_{+}}\right]}  \|{\sf X}^m_{k+1} - {\sf X}^m_k\|^2 + {\textstyle \Theta_k^m }  \|F^{\Lambda^m_k}_{{\sf V}^m_k}({\sf X}^m_k)\|^2 \mid \mathcal F_{-1,+}^m \right] \\ & \hspace{-19ex} + \Exp\left[{\textstyle \frac{L^2}{2} \left[ \frac{(1+\nu)\lambda^m_{+}}{b_{m,+}} + \frac{1}{b_m\rho} \right]} \|{\sf X}^m_k - \tilde {\sf X}^m \|^2  - {\textstyle\frac{1}{2\lambda^m_k}}  \|F^{\Lambda^m_k}({\sf X}^m_k)\|^2 \mid \mathcal F^m_{-1,+}\right ] \\ & \hspace{-19ex} + \frac12 {\textstyle\left[ \rho - \frac{1}{\lambda^m_k} \right]} \Exp[ \|p^{\Lambda^m_k}_{{\sf V}_k^m}({\sf X}_k^m) - p^{\Lambda^m_k}({\sf X}_k^m)\|^2 \mid \mathcal F^m_{-1,+}], 
\end{align*}
\endgroup
almost everywhere, where $\Theta_k^m =  \mathcal L_k^m - {(\lambda^m_k)}^{-1} $,
\[ \mathcal L_k^m := \left[ \ell(\lambda^m_{+},\alpha_k^m,\beta_k^m) + \frac{L^2(\beta^m_k)^2}{b_{m,+}}\left(1+\frac{1}{\nu}\right) \lambda^m_{+} \right] (\nu^m_k)^2 + \frac{1}{\lambda^m_{+}}, \] 
and $k \in [K_m-1]_0$, $m$, and $\nu, \rho > 0$ are arbitrary. Summing the estimate $\|{\sf X}_k^m - {\sf X}_0^m\|^2 \leq k \sum_{i=1}^k \|{\sf X}_{i}^m - {\sf X}_{i-1}^m\|^2$ for $k \in [K_m-1]$, we now get  
%
%
%
\begin{align*} {\sum}_{k=0}^{K_m-1} \|{\sf X}^m_k - \tilde {\sf X}^m \|^2 & \leq {\sum}_{i=1}^{K_m-1} \left[ {\sum}_{k=i}^{K_m-1} k \right] \|{\sf X}_i^m - {\sf X}_{i-1}^m\|^2 \\ÃÂ & \leq \frac{K_m(K_m-1)}{2} {\sum}_{i=1}^{K_m-1}  \|{\sf X}_i^m - {\sf X}_{i-1}^m\|^2
\end{align*}
for all $\omega \in \Omega$.
%
%
Combining the latter results, choosing $\nu = (K_m-1)^{-1}$ and $\rho := ((1+\nu)\lambda^m_{+})^{-1} \leq \mathcal L^m_k$, defining $\theta(\lambda^m_{+}) = L^2(1+\nu)\lambda^m_{+} [ \frac{1}{b_{m,+}} + \frac{1}{b_m}]$, and summing over $k$, it holds
\begin{align} \nonumber  2\Exp[\psi({\sf X}^m_{K_m})  \mid \mathcal F_{-1,+}^m] - 2 \psi({\sf X}^{m}_{0})  +  \sum_{k=0}^{K_m-1} \frac{1}{\lambda^m_k} \Exp[\|F^{\Lambda^m_k}({\sf X}_k^m)\|^2 \mid \mathcal F_{-1,+}^m]  & \\ \nonumber &\hspace{-61ex} \leq  + \sum_{i=1}^{K_m} {\textstyle\left[ L_f - \frac{1}{\lambda^m_{+}} + \half K_m (K_m-1)\theta(\lambda^m_{+}) \right]} \Exp[ \|{\sf X}^m_i - {\sf X}^m_{i-1} \|^2 \mid \mathcal F_{-1,+}^m] \\ \label{eq:sum-it-up} & \hspace{-58ex} +  \sum_{k=0}^{K_m-1}  {\Theta_k^m} \cdot  \Exp[ \|F^{\Lambda^m_k}_{{\sf V}^m_k}({\sf X}^m_k)\|^2 + \|p^{\Lambda^m_k}_{{\sf V}_k^m}({\sf X}_k^m) - p^{\Lambda^m_k}({\sf X}_k^m)\|^2 \mid \mathcal F_{-1,+}^m]. 
\end{align}
%
 Next, we want to choose the step sizes $\lambda_{+}^m$ and $\lambda^m_k$ such that 
\be \label{eq:def-lamplus-app} L_f - \frac{1}{2\lambda^m_{+}} + \frac{1}{2} K_m(K_m-1) \theta(\lambda^m_{+}) \leq 0 \quad \text{and} \quad \mathcal L_k^m - \frac{1-\bar\rho}{\lambda^m_k} \leq 0. \ee
In this case, the two last sum expressions in the preceding inequality \eqref{eq:sum-it-up} are nonpositive and can therefore be neglected. We first derive a bound for the parameter $\lambda_{+}^m$ using the first inequality in \eqref{eq:def-lamplus-app}. In particular, the (unique) positive solution $\bar \lambda^m_+$ of its associated quadratic equation is given by $\bar \lambda^m_+ = (L_f + ({L_f^2} + L^2\tau_m^2)^{1/2})^{-1}$. Hence, all $\lambda^m_{+}$ with
\[ \lambda^m_+ \leq \frac{1}{2L_f+ L \cdot \tau_m} \]
satisfy the first inequality \eqref{eq:def-lamplus-app}. Rearranging the terms and using \eqref{eq:def-lamplus-app} and the definition of $\mathcal L_k^m$ yields the bound stated in Theorem \ref{theorem:complex}. 
%
%
Summing the inequality \eqref{eq:sum-it-up} for $m$, taking expectation, and applying (A.2) and \cite[Lemma 2]{Nes13}, it follows
\be \label{eq:esti-for-complex} \half \sum_{m=0}^{M-1} \sum_{k=0}^{K_m-1} \frac{\Exp[ \|F^{I}({\sf X}^m_k)\|^2 ]}{({\lambda^m_k})^{-1}} \leq \sum_{m=0}^{M-1} \Exp[ \psi(\tilde{\sf X}^m) -\psi(\tilde{\sf X}^{m+1})] \leq \psi(x^0) -\psi^*  \ee
%
%
%
for all $M \in \N$. Using the bounds \eqref{eq:def-lamplus-app} in \eqref{eq:sum-it-up}, we further obtain 
\be \label{eq:another-eq} \sum_{m=0}^\infty \sum_{k=0}^{K_m-1} \frac{\Exp[\|{\sf X}^m_{k+1} - {\sf X}^m_{k}\|^2]}{\lambda_+^m}  < \infty, \, \sum_{m=0}^\infty \sum_{k=0}^{K_m-1} \frac{\overline{\rho}\,\Exp[\|F^{\Lambda_k^m}_{{\sf V}^m_k}({\sf X}^m_k)\|^2]}{\lambda_k^m} < \infty. \ee
Moreover, due to condition (A.2), Theorem \ref{theorem:conv-superm} is applicable, which establishes almost sure convergence of $(\psi(\tilde{\sf X}^m))_m$. By the reverse H\"older inequality, we have
\[ \sum_{m=0}^{\infty} \sum_{k=0}^{K_m-1} {\lambda^m_k}  \Exp[ \|F^{I}({\sf X}^m_k)\|^2 ] \geq  \sum_{m=0}^\infty {\left( \sum_{k=0}^{K_m-1} \frac{1}{\lambda^m_k} \right)^{-1}}  \Exp\left[\left( \sum_{k=0}^{K_m-1} \|F^{I}({\sf X}^m_k)\|  \right)^2 \right],  \]
which implies $\liminf_{m\to \infty} K_m^{-1} \sum_{k=0}^{K_m-1} \Exp[\|F^I({\sf X}^m_k)\|] = 0$ under the additional assumptions in \eqref{eq:step-choice-vr}. To prove convergence of the whole sequence, we proceed as in Theorem \ref{theorem:gen-conv-special}. In particular, let us define $y_m := K_m^{-1} \sum_{k=0}^{K_m-1} \Exp[\|F^I({\sf X}^m_k)\|]$ and suppose that $(y_m)_m$ does not converge to zero. Then, there exist increasing sequences $(t_i)_i$ and $(\ell_i)_i$ such that $\ell_i > t_i$ for all $i$ and 
\[ y_{t_i} \geq 2\veps, \quad y_{\ell_i} < \veps, \quad \text{and} \quad y_k \geq \veps, \quad \forall~k=t_i+1,...,\ell_i-1. \]
Hence, setting $\gamma_i := \sum_{r=t_i}^{\ell_i-1} K_r^2 [\sum_{k=0}^{K_r-1} (\lambda_k^r)^{-1} ]^{-1} $, it follows
\begin{align*}
\infty > \sum_{m=0}^\infty K_m^2 \left[ \sum_{k=0}^{K_m-1} \frac{1}{\lambda_k^m} \right]^{-1} y_m^2 \geq \sum_{i=0}^\infty \sum_{r=t_i}^{\ell_i-1} K_r^2 \left[ \sum_{k=0}^{K_r-1} \frac{1}{\lambda_k^r} \right]^{-1} y_r^2 \geq \veps^2 \sum_{i=0}^\infty \gamma_i
\end{align*}
and the sequence $(\gamma_i)_i$ must converge to zero. Utilizing Jensen's inequality for $| \Exp[\cdot ] |$ and $(\Exp[\cdot])^2$, the Lipschitz continuity of $x \mapsto F^I(x)$, the reverse triangle inequality, ${\sf X}^{r+1}_0 = {\sf X}^r_{K_r}$, H\"older's inequality, and the second condition in  \eqref{eq:step-choice-vr}, we obtain
\begin{align*} |y_{\ell_i} - y_{t_i}|  &  \leq |y_{\ell_i} - \Exp[\|F^I({\sf X}_0^{\ell_i})\|]| + {\sum}_{r=t_i+1}^{\ell_i-1} \Exp[| \| F^I({\sf X}_0^{r+1}) \| - \| F^I({\sf X}_0^{r}) \| | ] \\ & \hspace{4ex}+ |\Exp[\|F^I({\sf X}^{t_i+1}_0)\|] - y_{t_i}| \\ & \leq (2+L_f) \left[  K_{\ell_i}^{-1} \sum_{k=0}^{K_{\ell_i}-1} \Exp[ \|{\sf X}^{\ell_i}_k - {\sf X}^{\ell_i}_0\|] +  \sum_{r=t_i+1}^{\ell_i-1} \Exp[\|{\sf X}^{r+1}_0 - {\sf X}^r_0\|] \right.  \\ & \hspace{4ex} \left. +K_{t_i}^{-1} {\sum}_{k=0}^{K_{t_i}-1} \Exp[\|{\sf X}_0^{t_i+1}-{\sf X}_k^{t_i}\|] \right] \\
& \leq (2+L_f) {\sum}_{r=t_i}^{\ell_i} {\sum}_{k=0}^{K_r-1}\Exp[\|{\sf X}^r_{k+1} -{\sf X}^r_{k}\|] & \\ &\leq (2+L_f) \left[ K_{\ell_i} \lambda_+^{\ell_i} + \underline{\rho}^{-1}\gamma_i \right]^\half \left[ \sum^{\ell_i}_{r = t_i} \sum_{k=0}^{K_r - 1} \frac{1}{\lambda_+^r} \Exp[\|{\sf X}^r_{k+1} - {\sf X}^r_k\|^2] \right]^\half.
 \end{align*}
 Due to $K_{\ell_i}\lambda_+^{\ell_i} \leq \kappa$ and \eqref{eq:another-eq}, this implies $|y_{\ell_i} - y_{t_i}| \to 0$ as $i \to \infty$ which is a contradiction to $|y_{\ell_i} - y_{t_i}| \geq \veps$. Almost sure convergence can be shown in an analogous way. (See also the proof of Theorem \ref{theorem:gen-conv-special} for comparison).  
\end{proof}

%
%

The statements in Theorem \ref{theorem:complex} and the assumptions on the step sizes $\lambda_k^m$ and $\lambda_+^m$ resemble our previous results and requirements. However, since the number of inner iterations $K_m$ is allowed to increase or change, the conditions for the choice of $\lambda_k^m$ are more complex. In general, in order to ensure strong ergodic-type convergence of the stationarity measure $\|F^I({\sf X}^m_k)\|$, the step sizes $\lambda_k^m$ and $\lambda_+^m$ again need to be of similar order. This can be achieved by balancing and bounding the parameters $\nu_k^m$ and by performing a fixed number of inner iterations $K_m \equiv K$, $m \in \N$, for instance. If the step sizes $\lambda_k^m \equiv \lambda^m$ do not depend on $k$, then the conditions in \eqref{eq:step-choice-vr} reduce to $\sum_{m=0}^\infty K_m \lambda^m = \infty$ and $\underline{\rho}\lambda^m_+ \leq \lambda^m$ for all $m$. Furthermore, if only the first condition in \eqref{eq:step-choice-vr} is satisfied, we can still show a weaker convergence result, i.e., in this situation it holds that $ \liminf_m K_m^{-1} \sum_{k=0}^{K_m-1} \Exp[\|F^I({\sf X}^m_k)\|] = 0$ and $\liminf_m K_m^{-1} \sum_{k=0}^{K_m-1} \|F^I({\sf X}^m_k)\| = 0$ \as  

\begin{remark} \label{remark:vr}Computationally, it would be highly attractive to use the same sample set $\mathcal S^m_k$ to generate the oracle $v_{k,+}^m$. Similar to the observations made in Remark \ref{remark:unbiased-1}, it is still possible to show convergence in this case, if the bounds on the step sizes $\lambda_{+}^m$ and $\lambda_{k}^m$ are adjusted accordingly. (Notice that Lemma \ref{lemma:sto-err-bound} is not directly applicable).  Specifically, utilizing the estimate  
\begin{align*} \|\nabla f(z_k^m) - v_{k,+}^m\|^2 \leq \frac{1+\nu}{\nu}[(L+L_f)\beta_k^m \|d^m_k\|]^2 + (1+\nu) \|\nabla f(x_k^m) - v_k^m\|^2
\end{align*}
for $v_{k,+}^m = \nabla f_{\mathcal S^m_k}(z^m_k) - \nabla f_{\mathcal S^m_k}(\tilde x^m) + \nabla f(\tilde x^m)$, $\nu > 0$, we can extend the statements and calculations in the proof of Theorem \ref{theorem:complex} to cover this situation. 
\end{remark}
 
As in \cite{RedHefSraPocSmo16,RedSraPocSmo16,WanMaGolLiu17}, our convergence analysis allows to derive complexity results for reaching an $\veps$-accurate stationary point in terms of the number of gradient component evaluations (IFO) when the step sizes $\lambda_k^m$ and $\lambda_+^m$ are constant. 
%
%
%
%
%
%
We summarize our observations in the following corollary. A brief proof of Corollary \ref{corollary:complexity} is presented in the appendix.  

\begin{corollary} \label{corollary:complexity} Suppose that Algorithm \ref{alg:inex-qnm} is run with $K_m \equiv K$ and constant parameter matrices $\Lambda_k^m \equiv \lambda^{-1}I$, $\Lambda_{k,+}^m \equiv \lambda_+^{-1}I$ and batch sizes $b_m \equiv b$, $b_{m,+} \equiv b_+$. Let us assume that the conditions {\rm(A.1)}--{\rm(A.2)} and {\rm(C.1)}--{\rm(C.4)} are satisfied and that there is $\bar \nu > 0$ such that $\alpha_k^m \nu_k^m \leq \bar \nu$ and $\beta^m_k\nu_k^m \leq \bar \nu$ for all $m$, $k$. Furthermore, let us set
\[ K = \lceil N^{\frac13}\rceil, \quad b = b_+ = K^2, \quad \lambda_+ = \frac{\gamma}{L}, \quad \lambda = \frac{\gamma}{L(1+3\bar\nu^2)}, \quad \gamma = \frac{\sqrt{5}-1}{2}, \]
and suppose that the output variable ${\sf X}$ is sampled uniformly at random from the iterates $\{{\sf X}^m_k : k \in [K-1]_0, \, m \in [M-1]_0\}$. Then, it holds that 
\be \label{eq:cor-complex} \Exp[\|F^I({\sf X})\|^2] \leq \frac{2L(1+3\bar\nu^2)[\psi(x^0)-\psi^*]}{\gamma MK} \ee
and the total number of component gradient evaluations to achieve an $\veps$-accurate stationary point with $\Exp[\|F^I({\sf X})\|^2] \leq \veps$ is $\mathcal O(N^{2/3}/\veps)$.
\end{corollary}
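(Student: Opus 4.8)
The plan is to instantiate the general analysis from Theorem \ref{theorem:complex} with the specific parameter choices and then convert the resulting per-iteration complexity into an IFO count. First I would verify that the prescribed step sizes $\lambda_+ = \gamma/L$ and $\lambda = \gamma/(L(1+3\bar\nu^2))$ satisfy the hypotheses of Theorem \ref{theorem:complex}. Since $K = \lceil N^{1/3}\rceil$ and $b = b_+ = K^2$, we have $\tau_m = K(b_+^{-1} + b^{-1})^{1/2} = K \cdot (2/K^2)^{1/2} = \sqrt{2}$, so $2L_f + L\tau_m \leq 2L + \sqrt{2}L \leq L/\gamma$ needs to be checked using $\gamma = (\sqrt 5 - 1)/2$; in fact one should verify $\gamma(2+\sqrt 2) \leq 1$, which holds numerically. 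For the bound on $\lambda$, note $\mu_k^m = (\alpha_k^m + L_f\beta_k^m\lambda_+)\nu_k^m \leq \bar\nu + L\bar\nu\lambda_+ = \bar\nu(1+\gamma) $ and $(L\beta_k^m\nu_k^m\lambda_+)^2 K b_+^{-1} = (L\bar\nu \cdot \gamma/L)^2 = \gamma^2\bar\nu^2$ after using $K/b_+ = 1/K \le 1$ --- more carefully $K b_+^{-1} = K/K^2 = 1/K$, so this term is $\leq \gamma^2\bar\nu^2/K \le \gamma^2 \bar\nu^2$. Collecting, $1 + (\mu_k^m)^2 + \gamma^2\bar\nu^2 \leq 1 + \bar\nu^2(1+\gamma)^2 + \gamma^2\bar\nu^2 \leq 1 + 3\bar\nu^2$ (again a numeric check on $(1+\gamma)^2 + \gamma^2 \leq 3$), so the choice $\lambda = (1-\bar\rho)\lambda_+/(1+3\bar\nu^2)$ with $\bar\rho = 0$ is admissible.

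Next I would apply the estimate \eqref{eq:esti-for-complex} from the proof of Theorem \ref{theorem:complex}, which with constant step sizes $\lambda_k^m \equiv \lambda$ reads
\begin{align*}
\frac{\lambda}{2} \sum_{m=0}^{M-1} \sum_{k=0}^{K-1} \Exp[\|F^I({\sf X}^m_k)\|^2] \leq \psi(x^0) - \psi^*.
\end{align*}
Since ${\sf X}$ is drawn uniformly from the $MK$ iterates $\{{\sf X}^m_k\}$, we have $\Exp[\|F^I({\sf X})\|^2] = (MK)^{-1}\sum_{m,k}\Exp[\|F^I({\sf X}^m_k)\|^2] \leq 2(\psi(x^0)-\psi^*)/(\lambda MK)$, and substituting $\lambda = \gamma/(L(1+3\bar\nu^2))$ gives exactly \eqref{eq:cor-complex}.

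Finally I would translate \eqref{eq:cor-complex} into an IFO bound. To reach $\Exp[\|F^I({\sf X})\|^2] \leq \veps$ it suffices to take $MK = \mathcal O(1/\veps)$, i.e. $M = \mathcal O(1/(K\veps)) = \mathcal O(N^{-1/3}/\veps)$ outer loops. Each outer loop costs one full gradient ($N$ component evaluations for $g^m$) plus $K$ inner iterations each costing $b + b_+ = 2K^2$ component evaluations, for a total of $N + 2K^3 = \mathcal O(N)$ per outer loop since $K^3 = \lceil N^{1/3}\rceil^3 = \mathcal O(N)$. Hence the overall IFO complexity is $M \cdot \mathcal O(N) = \mathcal O(N^{-1/3}/\veps) \cdot \mathcal O(N) = \mathcal O(N^{2/3}/\veps)$. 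The only mild obstacle is the bookkeeping of the numeric inequalities needed to confirm that $\gamma = (\sqrt5-1)/2$ makes all the step size conditions of Theorem \ref{theorem:complex} hold simultaneously with $\bar\rho = 0$; everything else is a direct specialization of the already established convergence estimate.
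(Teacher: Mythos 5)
Your overall route is the same as the paper's (specialize Theorem \ref{theorem:complex}, invoke \eqref{eq:esti-for-complex}, then count IFO calls and optimize over $K$), and your treatment of the bound on $\lambda$, of \eqref{eq:cor-complex}, and of the $\mathcal O(N^{2/3}/\veps)$ count is correct. However, there is a genuine error in your first verification step. You claim that $\lambda_+ = \gamma/L$ satisfies the hypothesis $\lambda_+ \leq (2L_f + L\tau_m)^{-1}$ of Theorem \ref{theorem:complex} because ``$\gamma(2+\sqrt{2}) \leq 1$ holds numerically.'' It does not: $\gamma = (\sqrt{5}-1)/2 \approx 0.618$ gives $\gamma(2+\sqrt{2}) \approx 2.11 > 1$. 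So with the golden-ratio choice of $\gamma$ the step size condition \emph{as stated in Theorem \ref{theorem:complex}} is violated (unless $L_f \ll L$, which is not assumed), and your argument as written does not go through.

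The fix, which is what the paper does, is to go back into the proof of Theorem \ref{theorem:complex} rather than its statement: for the complexity bound one only needs the coefficient $L_f - \lambda_+^{-1} + \tfrac12 K(K-1)\theta(\lambda_+)$ in \eqref{eq:sum-it-up} to be nonpositive, i.e., the first inequality of \eqref{eq:def-lamplus-app} with $\lambda_+^{-1}$ in place of $(2\lambda_+)^{-1}$; the stronger version with the extra margin $-\tfrac{1}{2\lambda_+}$ is only used to extract the summability statement \eqref{eq:another-eq}, which the corollary does not need. With $\tau_m = \sqrt{2}$ and $K(K-1)(1+\nu) = K^2$ this relaxed condition reads $L - \lambda_+^{-1} + L^2\lambda_+ \leq 0$, and $\lambda_+ = \gamma/L$ with $\gamma^2 + \gamma - 1 = 0$ makes it hold with equality --- this is precisely why $\gamma$ is chosen as the golden ratio. (Your check of the second condition is fine: $(1+\gamma)^2 + \gamma^2 = 3$ exactly, again by $\gamma^2 = 1-\gamma$.) Once this is repaired, the remainder of your argument --- uniform sampling of the output, substitution of $\lambda$, and the IFO count $M(N + 2K^3)$ with $K \sim N^{1/3}$ --- coincides with the paper's proof.
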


\section{Higher Order-Type and Quasi-Newton Directions} \label{section:direction}

We now describe possible strategies for choosing and constructing the directions $d^k$ and $d^m_k$ used in Algorithm \ref{alg:seqn} and \ref{alg:inex-qnm}. 

\subsection{Stochastic Semismooth Newton-type Directions} In order to motivate the quasi-Newton directions proposed in the next subsections, we first consider a ``full'' stochastic second order step. The principal idea of semismooth Newton-type methods is to build a suitable generalized derivative $M$ of the nonsmooth residual $F^\Lambda$ at $x$ and to iteratively perform updates of the form
\[ M d = - F^\Lambda(x) \quad \text{and} \quad x \leftarrow x + d, \]
see, e.g., \cite{QiSun93,Qi93,PanQi93}. In \cite{MilXiaCenWenUlb18}, Milzarek et al. propose a stochastic variant of this procedure that is based on the following stochastic generalized derivative
\[ \mathcal M_{v,H}^\Lambda(x) := \{M: M = (I-D) + D\Lambda^{-1} H, \, D \in \partial \proxt{\Lambda}{\vp}(u^\Lambda_v(x)) \} \]
where $H \approx \nabla^2 f(x)$ is a stochastic approximation of the Hessian and $\partial \proxt{\Lambda}{\vp}$ denotes the Clarke subdifferential of the proximity operator. After choosing $M \in \mathcal M_{v,H}^{\Lambda}(x)$, a stochastic direction can be generated via $d = - W F^{\Lambda}_{v}(x)$ where $W \approx M^{-1}$. In practice, $d$ may be obtained by using an inexact, iterative solver. The theoretical results presented in the last sections are applicable to directions of this form as long as an appropriate bound of the (inexact) inverse is available or if the output of the utilized iterative solver can be suitably controlled. Since the computation of such a stochastic second order-type step can be time-consuming and expensive in large-scale settings, we will focus on more straightforward ways to generate $d^k$.


\subsection{Stochastic L-BFGS Steps} \label{sto-full-qn}
The limited memory BFGS method (L-BFGS) is a classical and ubiquitous algorithmic scheme to generate cheap and tractable quasi-Newton steps. 
Based on a set of recent curvature pairs $\{ U_{k},Y_{k}\}$,
\[ U_{k} = [u_{k-p},\dots,u_{k-1}] \in \R^{n \times p}, \quad Y_{k} =[y_{k-p},\dots,y_{k-1}] \in \R^{n \times p}, \]
a matrix $W_k$ is built via the recursion: $W_k^0 = \gamma_k I$, 
\beÂ \label{eq:bfgs-up} W_k^{i+1} = (I - \rho_i u_{k_p(i)} y_{k_p(i)}^\top) W_k^{i} (I- \rho_i y_{k_p(i)} u_{k_p(i)}^\top) + \rho_i u_{k_p(i)} u_{k_p(i)}^\top, \ee
for $k_p(i) := k-p+i$, $i = 0,...,p-1$, and $W_k = W_k^p$, where 
\be \label{eq:bfgs-para} \rho_i = \frac{1}{\iprod{y_{k_p(i)}}{u_{k_p(i)}}} \quad \text{and} \quad \gamma_k = \frac{\iprod{u_{k-1}}{y_{k-1}}}{\iprod{y_{k-1}}{y_{k-1}}}. \ee 
%
If $u_{k-1}$ and $y_{k-1}$ are chosen as $u_{k-1} = x^{k} - x^{k-1}$ and $y_{k-1} = \nabla f(x^{k}) - \nabla f(x^{k-1})$, then the process \eqref{eq:bfgs-up} coincides with the standard L-BFGS method, \cite{Noc80,LiuNoc89}, to approximate the inverse of the Hessian $\nabla^2 f$ at $x^k$. Recently, various stochastic L-BFGS strategies have been developed for convex and nonconvex smooth problems, see, e.g., \cite{schraudolph2007stochastic,MokRib14,MokRib15,ByrHanNocSin16,gower2016stochastic,BerNocTak16,WanMaGolLiu17}. Since the full gradient $\nabla f$ is not accessible, the key differences between those stochastic L-BFGS techniques are how the pairs $\{ U_{k},Y_{k}\}$ are updated and constructed. In this paper, we utilize the following iterate and stochastic residual differences: 
%
\begin{equation}
\label{curve}
u_k = z^{k} - x^k \quad \text{and} \quad y_k =F^{\Lambda_{k}}_{v^k_z}(z^{k})-F^{\Lambda_k}_{v^k}(x^k).
\end{equation} 
Furthermore, an update of the pairs $\{U_k, Y_k\}$ is only performed if the condition
\begin{equation}
\label{curvecond}
\iprod{u_k}{y_k} \geq \delta \|u_k\|^2.
\end{equation} 
holds for some constant $\delta > 0$. We also assume that the same stochastic oracle $v^k$ can be evaluated at the point $z^k$ to generate $v_z^k$. This is specified in more detail in Assumption \ref{assumption:four} in the next section. 
We note that a deterministic variant of this L-BFGS scheme was used successfully in \cite{XiaLiWenZha18} for convex composite programs.  
Moreover, other (stochastic or deterministic) strategies can be also be incorporated in our framework, see, e.g., \cite{CheQi94,Qi97,SunHan97,WanMaLi11,ManRun18} for related BFGS techniques for nonsmooth problems.


\subsection{ Coordinate Quasi-Newton Method}

The cost of computing a stochastic L-BFGS direction is $\mathcal O(pn)$. 
This cost is generally dominated by $n$ and can still be noticeably high when the dimension $n$ is large. In addition, the direction generated by L-BFGS is based on a dense approximation while in some applications the associated generalized derivative of $F^{\Lambda}$ is a sparse block matrix, as in, e.g., $\ell_1$-problems, \cite{XiaLiWenZha18}. 

Next, to further reduce the computational costs, we propose a coordinate-type L-BFGS method. Based on two disjoint index sets $\mathcal{I}\equiv \mathcal{I}(x^k)$ and $\mathcal{A} \equiv \mathcal{A}(x^k)$, we consider directions $d^k$ of the form:
 \be \label{eq:bfgs-coor-def}
 d^k = 
-W_k F_{v^k}^{\Lambda_k}(x^k)= - 
\begin{bmatrix}
W_{\mathcal{I}\mathcal{I}} & 0 \\[-.5ex] \\
0 & \zeta_k I\\
\end{bmatrix}
\begin{bmatrix}
(F_{v^k}^\Lambda(x^k))_{\mathcal{I}} \\[-0.5ex] \\
(F_{v^k}^{\Lambda_k}(x^k))_{\mathcal A} 
\end{bmatrix},
\ee
where $\zeta_k >0$ is a parameter and $W_{\mathcal{I}\mathcal{I}}$ is determined by the standard L-BFGS method using the lower dimensional curvature pairs $(U_k)_{[\mathcal{I}\cdot]}$ and $(Y_k)_{[\mathcal{I}\cdot]}$. A similar block-wise BFGS scheme was also investigated by Janka et al. \cite{JanKirSagWae16} for SQPs. 

The curvature pairs $\{U_k,Y_k\}$ are generated as in \eqref{curve} and every pair $\{u_i,y_i\}$, $i = k-p,...,k-1$, is supposed to satisfy the condition \eqref{curvecond}. Let $\mathcal Q \subseteq [p-1]_0$ denote the set of indices such that 
\be \label{eq:bfgs-coor} | \iprod{u_{k_p(q),\mathcal{I}}}{y_{k_p(q),\mathcal{I}}} | \geq \delta_1 \|u_{k_p(q)}\|^2, \quad \forall~q \in \mathcal Q, \ee
for some fixed $\delta_1 > 0$. The matrix $W_{\mathcal{I}\mathcal{I}}$ is now constructed via the L-BFGS recursion \eqref{eq:bfgs-up} using the adjusted curvature pairs 
%
\[ \bar U_k = [u_{k_p(q),\mathcal I}]_{q \in \mathcal Q} \in \R^{|\mathcal I| \times |\mathcal Q|} \quad \text{and} \quad \bar Y_k = [y_{k_p(q),\mathcal I}]_{q \in \mathcal Q} \in \R^{|\mathcal I| \times |\mathcal Q|}. \]
%
%
If $\mathcal Q$ is empty, we reset $\mathcal I$ to $[n]$. In such a case, $W_{\mathcal I\mathcal I} = W_k$ coincides with the full stochastic L-BFGS update presented in section \ref{sto-full-qn}.
In this paper, we propose to select the set as follows:
\begin{equation}
\label{coordinate-choose}
\mathcal{I}(x^k) := \{i\in [n] : |(F^{\Lambda_k}_{v^k}(x^k))_i| \geq \delta_2 \}, \quad  \mathcal{A}(x^k):=[n]\setminus \mathcal{I}(x^k),
\end{equation}
and $\delta_2  = 10^{-6}$. The choice of the index sets $\mathcal I$ and $\mathcal A$ is very flexible. In particular, we can consider multiple blocks and split the set $\mathcal I$ into several smaller disjoint sets. The coordinate selection strategy can also be tailored to specific optimization problems. 

We now present the stochastic setup and conditions that will allow us to formalize the definition of $v_z^k$ in \eqref{curve} and to show that the coordinate L-BFGS-type direction $d^k$ satisfies the assumptions (B.1)--(B.2) or (C.2)--(C.3). 

%




\begin{assumption} \label{assumption:four} Suppose that Algorithm \ref{alg:seqn} uses parameter matrices of the form $\Lambda_k = (\lambda_k)^{-1} I$. We consider the stochastic conditions:
\setlength{\leftmargini}{7ex}
\begin{itemize}
\item[{\rm(D.1)}] Let $(\Upsilon,\mathcal U)$ be a measurable space and ${\sf V} : \Rn \times \Upsilon \to \Rn$ is a Carath\'eodory function. For all $k \in \N$, we assume that ${\sf V}^k$ and ${\sf V}^k_+$ are generated as follows: 
\end{itemize}
\[ {\sf V}^k(\omega) := {\sf V}({\sf X}^k(\omega),\Xi^k(\omega)), \quad {\sf V}^k_+ = {\sf V}({\sf Z}^k(\omega),\Xi^k_+(\omega)), \quad \forall~\omega \in \Omega, \]
\begin{itemize}
\item[] where $\Xi^k,\Xi^k_+ : \Omega \to \Upsilon$ are $(\mathcal F,\mathcal U)$-measurable random variables.
\item[{\rm(D.2)}] There exists a random variable ${\sf L} : \Upsilon \to \R$ and $\ell > 0$ such that
\end{itemize}
\[ \|{\sf V}(x,u) - {\sf V}(y,u)\| \leq {\sf L}(u) \|x - y\|, \quad \forall~u \in \Upsilon, \quad \forall~x,y \in \Rn \]
\begin{itemize}
\item[] and we have $\sup_{k \in \N} \lambda_k {\sf L}(\Xi^k(\omega)) \leq \bar\ell$ almost surely. 
\end{itemize}
\end{assumption}

Condition (D.1) implies that the stochastic oracles $v^k$ and $v^k_+$ are generated by selecting two samples $\xi^k$ and $\xi^k_+$ and by setting $v^k = {\sf V}(x^k,\xi^k)$ and $v^k_+ = {\sf V}(z^k,\xi^k_+)$. This finally allows us to define the approximation $v^k_z$, used in \eqref{curve}, as follows 
\be \label{eq:vkz} v^k_z := {\sf V}(z^k,\xi^k). \ee 
Assumption (D.2) is a Lipschitz-type condition that appears frequently (in different variants) in the analysis of stochastic optimization methods, see, e.g., \cite{IusJofOliTho17,WanMaGolLiu17,DavDru18-2}.

\begin{example} Let us consider the mini-batch-type stochastic oracle $\nabla f_{\mathcal S_k}$ introduced in section \ref{section:extra-vr} for empirical risk problems. Suppose that $\mathcal S_k \subset [N]$ is a sub-sample that is chosen uniformly at random and without replacement from $[N]$ and let us set $\Upsilon := \{0,1\}^N$ and $\mathcal U := \mathcal P(\Upsilon)$. Then, we can define
\[ {\sf V}: \Rn \times \Upsilon \to \Rn, \quad {\sf V}(x,u) := {\iprod{\mathds 1}{u}}^{-1} \cdot {\sum}_{i\in [N]} \, u_i \nabla f_i(x). \]
and for $\xi^k_i := \mathds 1_{\mathcal S_k}(i)$, $i \in [N]$, it follows $\nabla f_{\mathcal S_k}(x^k) = {\sf V}(x^k,\xi^k)$. Consequently, if the mappings $\nabla f_i$ are all Lipschitz continuous, then the mini-batch stochastic gradient $\nabla f_{\mathcal S_k}$ satisfies the conditions (D.1)--(D.2). \end{example}


Clearly, the random variable ${\sf D}^k$ associated with the direction $d^k$ defined in \eqref{eq:bfgs-coor-def} is $\mathcal F^k$-measurable. We now verify the boundedness property (B.2).
\begin{lemma}
\label{bfgs_upper}
Suppose that the matrices $(W_k)_k$ and the corresponding stochastic process $({\sf W}_k)_k$ are generated via the block coordinate scheme \eqref{eq:bfgs-coor-def} with curvature pairs $\{U_k,Y_k\}$ as specified in \eqref{curve}--\eqref{curvecond} and \eqref{eq:bfgs-coor}--\eqref{eq:vkz}. Let us assume that the conditions {\rm (D.1)}--{\rm(D.2)} are satisfied and it holds that $\Lambda_k = \lambda_k^{-1}I$ and $|\zeta_k| \leq \bar \zeta $ for some $\bar \zeta > 0$ and all $k$. Then there exists $\bar \nu \equiv \bar\nu(\bar\ell,p,\delta_1,\bar\zeta)$ such that, almost surely, we have
\[
\|{\sf W}_k\| \leq \bar\nu \quad \text{and} \quad \|{\sf D}^k\| \leq \bar \nu \cdot \|F^{\Lambda_k}_{{\sf V}^k}({\sf X}^k)\|, \quad \forall~k.
\]
\end{lemma}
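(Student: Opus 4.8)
Here is how I would approach the proof.

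The plan is to reduce both claimed inequalities to a single uniform (in $k$ and $\omega$) bound on the block $W_{\mathcal I\mathcal I}$, and then to run the classical inductive operator‑norm estimate for the L‑BFGS recursion \eqref{eq:bfgs-up} once the ratio $\|y_q\|/\|u_q\|$ of the curvature pairs has been controlled. By \eqref{eq:bfgs-coor-def}, $W_k$ is (after permuting coordinates) block diagonal with blocks $W_{\mathcal I\mathcal I}$ and $\zeta_k I$, so $\|W_k\| = \max\{\|W_{\mathcal I\mathcal I}\|,|\zeta_k|\} \le \max\{\|W_{\mathcal I\mathcal I}\|,\bar\zeta\}$; moreover ${\sf D}^k = -{\sf W}_k F^{\Lambda_k}_{{\sf V}^k}({\sf X}^k)$, whence $\|{\sf D}^k\| \le \|{\sf W}_k\|\cdot\|F^{\Lambda_k}_{{\sf V}^k}({\sf X}^k)\|$. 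It therefore suffices to exhibit a deterministic constant $\bar\nu$, depending only on the fixed constants of the scheme ($\bar\ell,p,\delta_1,\delta,\bar\zeta$), with $\|W_{\mathcal I\mathcal I}\| \le \bar\nu$ and $\bar\zeta \le \bar\nu$ for a.e.\ $\omega$ and all $k$; the two displayed estimates then follow at once.

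First I would bound $\|y_k\|$ in terms of $\|u_k\|$. Using $F^{\Lambda_k}_v(x) = x - \proxt{\Lambda_k}{\vp}(x-\lambda_k v)$, the fact that $\proxt{\Lambda_k}{\vp}$ is $1$‑Lipschitz in the Euclidean norm (a consequence of \eqref{eq:prox-nonexp} and $\Lambda_k = \lambda_k^{-1}I$), the triangle inequality, and the representations $v^k = {\sf V}(x^k,\xi^k)$, $v^k_z = {\sf V}(z^k,\xi^k)$ coming from (D.1) and \eqref{eq:vkz}, I would write
\[ \|y_k\| \le 2\|z^k-x^k\| + \lambda_k\|{\sf V}(z^k,\xi^k)-{\sf V}(x^k,\xi^k)\| \le (2+\lambda_k{\sf L}(\xi^k))\,\|z^k-x^k\|. \]
The last part of (D.2), $\sup_k\lambda_k{\sf L}(\Xi^k)\le\bar\ell$ a.s., together with $u_k = z^k-x^k$, then yields $\|y_k\| \le (2+\bar\ell)\|u_k\|$ for a.e.\ $\omega$ and all $k$, and since coordinate restriction does not increase the norm, also $\|y_{k,\mathcal I}\| \le \|y_k\| \le (2+\bar\ell)\|u_k\|$.

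Next I would extract the consequences of the curvature conditions and carry out the recursion. For $q\in\mathcal Q$ put $\bar u_q := u_{k_p(q),\mathcal I}$, $\bar y_q := y_{k_p(q),\mathcal I}$, $\rho_q := \iprod{\bar u_q}{\bar y_q}^{-1}$. Condition \eqref{eq:bfgs-coor} gives $|\iprod{\bar u_q}{\bar y_q}| \ge \delta_1\|u_{k_p(q)}\|^2 \ge \delta_1\|\bar u_q\|^2$, hence $|\rho_q|\le(\delta_1\|\bar u_q\|^2)^{-1}$; combining \eqref{eq:bfgs-coor}, Cauchy--Schwarz, and the previous step gives
\[ \delta_1\|u_{k_p(q)}\|^2 \le |\iprod{\bar u_q}{\bar y_q}| \le \|\bar u_q\|\,\|\bar y_q\| \le (2+\bar\ell)\|\bar u_q\|\,\|u_{k_p(q)}\|, \]
so $\|u_{k_p(q)}\| \le \delta_1^{-1}(2+\bar\ell)\|\bar u_q\|$ and therefore $\|\bar y_q\| \le (2+\bar\ell)\|u_{k_p(q)}\| \le \delta_1^{-1}(2+\bar\ell)^2\|\bar u_q\|$. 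The initial scaling in \eqref{eq:bfgs-up} (given by \eqref{eq:bfgs-para}, or its coordinate analogue) is bounded above and below by positive constants depending only on the fixed scheme parameters and $\bar\ell$, using \eqref{curvecond} (resp.\ \eqref{eq:bfgs-coor}) and the ratio bound. Writing one step of \eqref{eq:bfgs-up} as $W^{i+1} = V_i^\top W^i V_i + \rho_i\bar u_{q_i}\bar u_{q_i}^\top$ with $V_i := I - \rho_i\bar y_{q_i}\bar u_{q_i}^\top$, the above gives $\|V_i\| \le 1+|\rho_i|\|\bar u_{q_i}\|\,\|\bar y_{q_i}\| \le 1+\delta_1^{-2}(2+\bar\ell)^2 =: a$ and $|\rho_i|\|\bar u_{q_i}\|^2 \le \delta_1^{-1} =: b$, uniformly; since $|\mathcal Q|\le p$, the elementary induction $\|W^{i+1}\| \le a^2\|W^i\|+b$ yields $\|W_{\mathcal I\mathcal I}\| \le a^{2p}\gamma_{\max}+b(a^{2p}-1)/(a^2-1)$, a bound of the required form, and the degenerate case $\mathcal Q=\emptyset$ (where $\mathcal I$ is reset to $[n]$ and $W_{\mathcal I\mathcal I}=W_k$ is the full stochastic L‑BFGS update built from the pairs satisfying \eqref{curvecond}) is handled identically with $\delta$ in place of $\delta_1$. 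Taking $\bar\nu$ to be the maximum of $\bar\zeta$ and these two bounds finishes the argument.

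The main obstacle will be that the coordinate curvature condition \eqref{eq:bfgs-coor} controls $|\iprod{\bar u_q}{\bar y_q}|$ only from below, so $W_{\mathcal I\mathcal I}$ need not be positive definite and the usual L‑BFGS bookkeeping (which relies on $\rho_q>0$) does not literally apply. This is circumvented because the operator‑norm recursion needs only the upper bound $|\rho_q|\le(\delta_1\|\bar u_q\|^2)^{-1}$ and the ratio bound $\|\bar y_q\|\le\delta_1^{-1}(2+\bar\ell)^2\|\bar u_q\|$, both of which survive in this signed setting — and this in turn works precisely because \eqref{curve}--\eqref{eq:vkz} reuse the sample $\xi^k$ in the definition of $v^k_z$, so that Step 1 goes through with the single Lipschitz constant ${\sf L}(\xi^k)$ and the coordinate restriction $\|y_{k,\mathcal I}\|\le\|y_k\|$ can be invoked freely.
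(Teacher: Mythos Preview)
Your proposal is correct and follows essentially the same route as the paper: first bound $\|y_k\|\le(2+\bar\ell)\|u_k\|$ via the shared-sample Lipschitz condition (D.2) and nonexpansiveness of the prox, then run the operator-norm recursion $\|W^{i+1}\|\le a^2\|W^i\|+b$ for the L-BFGS update, using \eqref{eq:bfgs-coor} to control $|\rho_q|$ and the initial scaling. The only (harmless) difference is that you introduce the intermediate ratio bound $\|\bar y_q\|\le\delta_1^{-1}(2+\bar\ell)^2\|\bar u_q\|$ on the restricted vectors, whereas the paper keeps the full-vector norm $\|u_{k_p(q)}\|$ in the denominator of $|\bar\rho_i|$ and bounds $|\bar\rho_i|\,\|\bar u_i\|\,\|\bar y_i\|\le(2+\bar\ell)/\delta_1$ directly; this yields tighter constants ($a=1+(2+\bar\ell)/\delta_1$ versus your $a=1+\delta_1^{-2}(2+\bar\ell)^2$) but is otherwise the same argument.
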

\begin{proof}
As usual, the associated stochastic processes of the curvature pairs $(u_k)_k$ and $(y_k)_k$ are denoted by $({\sf U}_k)_k$ and $({\sf Y}_k)_k$. Then, due to assumption (D.2) and using the nonexpansiveness of the proximity operator, we obtain
\begin{align*}  \|{\sf Y}_k\| & \leq \|{\sf U}_k\| + \|\proxt{\Lambda_k}{\vp}({\sf Z}^k -\Lambda_k^{-1}{\sf V}_z^k) - \proxt{\Lambda_k}{\vp}({\sf X}^k - \Lambda_k^{-1}{\sf V}^k) \| \\ & \leq 2 \|{\sf U}_k\| + \lambda_k \|{\sf V}({\sf Z}^k,\Xi^k) - {\sf V}({\sf X}^k,\Xi^k)\| \leq (2+\bar\ell) \|{\sf U}_k\|
\end{align*}
for all $k$ and for almost every $\omega \in \Omega$. Next, let us consider an arbitrary realization of $({\sf Y}_k)_k$ and $({\sf U}_k)_k$ with $\|y_k\| \leq (2+\bar\ell)\|u_k\|$ for all $k \in \N$. Since $W_{\mathcal I\mathcal I}$ is generated by \eqref{eq:bfgs-up} using the pairs $\{\bar U_k, \bar Y_k\}$, we can write $W_{\mathcal I\mathcal I} = \bar W^{|\mathcal Q|}_k$ where $\bar W_k^0 = \bar \gamma_k I$, 
\[ \bar W_k^{i+1} = (I - \bar \rho_i \bar u_i \bar y_i^\top) \bar W_k^i (I - \bar \rho_i \bar u_i \bar y_i^\top) + \bar \rho_i \bar u_i \bar u_i^\top, \quad \bar\rho_i = \iprod{\bar u_i}{\bar y_i}^{-1}, \]
and $\bar \gamma_k = \iprod{\bar u_{|\mathcal Q|}}{\bar y_{|\mathcal Q|}} \|\bar y_{|\mathcal Q|}\|^{-2}$. We can now proceed as in \cite[Lemma 3.3]{WanMaGolLiu17}. Specifically, due to $\|uu^\top\| = \|u\|^2$, $\|uy^\top\| = \|u\|\|y\|$, and $|\bar \rho_i| \leq [\delta_1 \| \bar u_{i}\|^2]^{-1}$, we have
\begin{align*} \|\bar W_k^{i+1}\| & \leq \|\bar W_k^i\| + 2 |\bar \rho_i| \|\bar W_k^i\| \|\bar u_{i}\bar y_{i}^\top\| + \bar \rho_i^2 \|\bar W_k^i\| \|\bar u_{i}\bar y_{i}^\top\|^2 + |\bar \rho_i| \|\bar u_i\|^2 \\ & \leq \left[1 + {(2+\bar\ell)}\delta_1^{-1} \right]^2 \|\bar W_k^i\| + \delta_1^{-1} 
 \end{align*}
for $i = 0,...,|\mathcal Q|-1$. Similarly, the condition \eqref{eq:bfgs-coor} yields $|\iprod{\bar u_{|\mathcal Q|}}{\bar y_{|\mathcal Q|}}| \leq \delta_1^{-1} \|\bar y_{|\mathcal Q|}\|^2$ and $\bar \gamma_k \leq \delta_1^{-1}$. Together, this implies
\[ \|W_{\mathcal I\mathcal I}\| \leq \frac{1}{\delta_1} \sum_{i=0}^{|\mathcal Q|+1} \left[ \frac{2+\bar\ell + \delta_1}{\delta_1} \right]^{2i} \leq \frac{1}{2+\bar\ell} \left[ \left(\frac{2+\bar\ell+\delta_1}{\delta_1}\right)^{2(p+2)} - 1 \right]. \]
This shows that there exists a constant $\bar \nu$ (that only depends on $\bar\ell$, $p$, $\delta_1$, and $\bar \zeta$) such that $\|W_k\| \leq \bar \nu$ and finishes the proof of Lemma \ref{bfgs_upper}. 
\end{proof}

We note that by adjusting the curvature condition \eqref{eq:bfgs-coor} or by introducing an additional Powell damping strategy, see, e.g., \cite{JanKirSagWae16,WanMaGolLiu17}, the matrices $(W_k)_k$ can also be guaranteed to be uniformly positive definite.

\section{Numerical Results: Sparse Logistic Regression}
We first consider $\ell_1$-regularized logistic regression problems for binary classification:
\begin{equation}
\label{eq:LR} \min_{x \in \mathbb{R}^n}  \psi(x) = \frac{1}{N}\sum_{i=1}^{N} \log(1+\exp(-b_i \iprod{a_i}{x}) + \mu\|x\|_1, \end{equation}
where the data pairs $\{a_i,b_i\} \in \mathbb{R}^n \times \{-1,1\}$, $i\in [N]$, correspond to a given dataset. The parameter $\mu > 0 $ controls the level of sparsity and is set to $\mu = \frac{1}{N}$. 


\subsection{Datasets and Experimental Setting}
\label{relatedalgorithm}
\begin{table}[t]
\centering
\begin{tabular}{|p{10ex}p{8ex}p{8ex}|}
\hline 
data set & $N$ &  $n$ \\ 
\hline 
$\mathtt{cina}$ 	& 16033 		& 132 \\	
$\mathtt{a9a}$ 	& 32561		& 123 \\	
$\mathtt{ijcnn1}$ 	& 49990 		& 22 \\ 		
$\mathtt{covtype}$ 	& 581012 	& 54 \\		
$\mathtt{rcv1}$ 	& 20242 		& 47236 \\ 
\hline
\end{tabular}
\begin{tabular}{|p{10ex}p{10ex}p{10ex}|}
\hline 
data set & $N$ & $n$ \\ 
\hline 
$\mathtt{url}$ 		& 2396130	& 3231961 \\	
$\mathtt{susy}$ 	& 5000000	&18	\\		
$\mathtt{higgs}$ 	& 11000000	& 28	\\ 
$\mathtt{news20}$ 	&19996		& 1355191 \\ 
$\mathtt{kdda}$ 	& 8407752	& 20216830	 \\ 
\hline
\end{tabular}
\caption{A description of the datasets used in the numerical comparison.}
\label{table:datasets}
\end{table}

In our numerical experiments, we evaluate the performance of various solvers for the $\ell_1$-problem \eqref{eq:LR} on different datasets which are summarized in Table \ref{table:datasets}. All datasets except $\mathtt{cina}$ are downloaded from the LIBSVM website \cite{ChaLin11}. The tested datasets are large-scale and cover different numbers of data points $N$ and  features $n$. 
In addition to the training loss, 
we also keep track of the prediction accuracy during the training process. For $\mathtt{a9a}$, $\mathtt{ijcnn1}$, $\mathtt{kdda}$, and $\mathtt{rcv1}$, the training and testing sets are predefined in the original datasets. For the other datasets, we randomly select 80\% of the samples for training and use the remaining data for testing.



\subsection{Comparison for Different Variants}
\label{subsec:comparison-variants}
In the following, we conduct a preliminary comparison of different variants of Algorithm \ref{alg:inex-qnm} on the datasets $\mathtt{a9a}$ and $\mathtt{covtype}$. The total number of inner iterations $K_m$ and the size of the mini-batch stochastic gradient are chosen as follows:
\[ K_m \equiv K = 10, \quad b_m \equiv b = \min\{ 300, \lfloor 0.01N \rfloor \} \]
In order to simplify the notation, we use a single iteration counter for the inner and outer loops, i.e., we define $j := Km +k$ and 
\[ x^{|j} := x^{\lfloor j/K \rfloor}_{j - \lfloor j/K \rfloor K}, \quad \tilde x^{|j} := x^{\lfloor j/K \rfloor}_0, \quad \lambda_{|j} := \lambda^{\lfloor j/K \rfloor}_{j - \lfloor j/K \rfloor K}, \quad \text{etc.} \]
The oracle $v^{|j}$ is generated as specified in Algorithm \ref{alg:inex-qnm} by selecting a sample set $\cS^{|j}$ uniformly at random and without replacement from the index set $[N]$. As suggested and discussed in Remark \ref{remark:vr} and section \ref{section:direction}, we reuse the same sample set $\cS^{|j}$ to calculate the new oracles $v^{|j}_+$ and $v^{|j}_z$:
\be \label{eq:num-vr} v^{|j}_+ \equiv v^{|j}_z := \nabla f_{\cS^{|j}}(z^{|j}) - \nabla f_{\cS^{|j}}(\tilde x^{|j}) + \nabla f(\tilde x^{|j}). \ee
We consider the following four variants of Algorithm \ref{alg:inex-qnm}:
 \begin{itemize}[leftmargin=4ex]
\item SEQN-VR-1: We set $\alpha_{|j}=\beta_{|j} = 1$ and we utilize the coordinate quasi-Newton strategy to build the matrix $W_{|j}$ with $\zeta_{|j} = 1$ and $\mathcal I(x^{|j})$, $\mathcal A(x^{|j})$ as in \eqref{coordinate-choose}. 
The L-BFGS memory is set to $p = 10$.   
\item SEQN-VR-2:  The full stochastic L-BFGS method is applied with $\mathcal I(x^{|j}) = [n]$ for all $j$. Other settings are chosen as in SEQN-VR-1. 
\item SEQN-VR-3:  Different from \eqref{eq:num-vr}, we do not introduce a new oracle and set $v_+^{|j} \equiv v^{|j}$. Other parameters are as in SEQN-VR-1.
\item SEQN-VR-4 is a variant of SEQN-VR-2 with $v_+^{|j} \equiv v^{|j}$ for all $j$.
\end{itemize}

All four variants use matrices of the form $\Lambda_{|j,+}:=\lambda_{|j,+}^{-1}I$ and $\lambda_{|j,+}$ is calculated adaptively to estimate the Lipschitz constant of the gradient. Specifically, we compute 
\[ \lambda_{|j,+}^1 = \frac{\|z^{|j} - x^{|j}\|\cdot\min\{1,\lambda_{|j}\}}{\|F^{\Lambda_{|j}}_{v^{|j}_{z}}(z^{|j}) - F^{\Lambda_{|j}}_{v^{|j}}(x^{|j})\|}, \quad \lambda_{|j,+}^2 = \max\{10^{-3},\min\{10^3,\lambda_{|j,+}^1\}\}. \] 
The new step size $\lambda_{|j+1,+}$ is then defined as a weighted combination of $\lambda_{|j,+}^2$ and of the previous step sizes $\lambda_{|i,+}$, $i \in [j-1]$. The second step size is chosen via $\Lambda_{|j} =\lambda_{|j}I $ and $\lambda_{|j} = 0.5 \lambda_{|j,+}$. 
\begin{figure}[t]
	\centering
	\setlength{\belowcaptionskip}{-6pt}
	\begin{tabular}{cc}
		\hspace{-1.5ex}
		\subfloat[$\mathtt{a9a}$]{
			\includegraphics[width=5.5cm]{./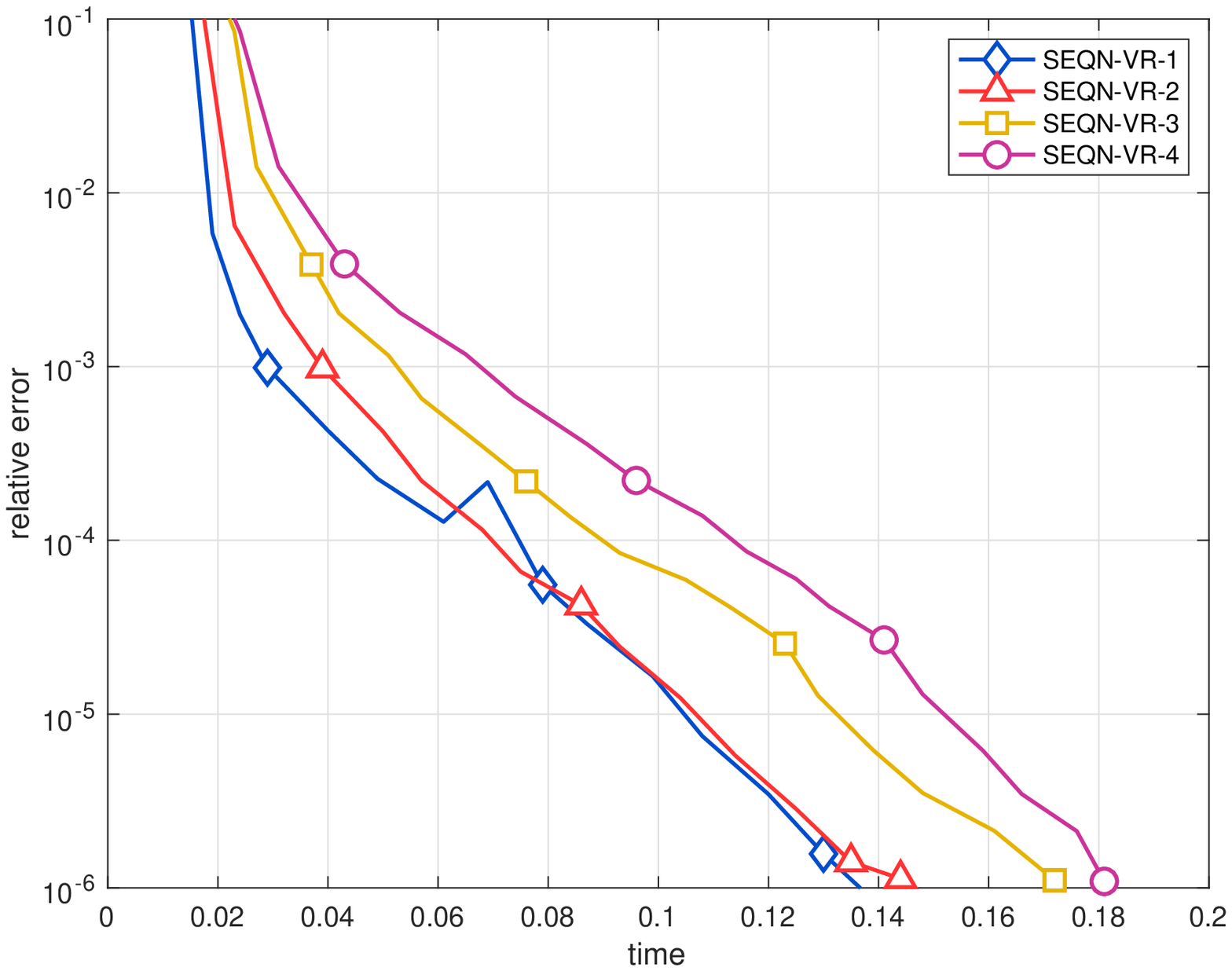}} &
			\subfloat[$\mathtt{covtype}$]{
			\includegraphics[width=5.5cm]{./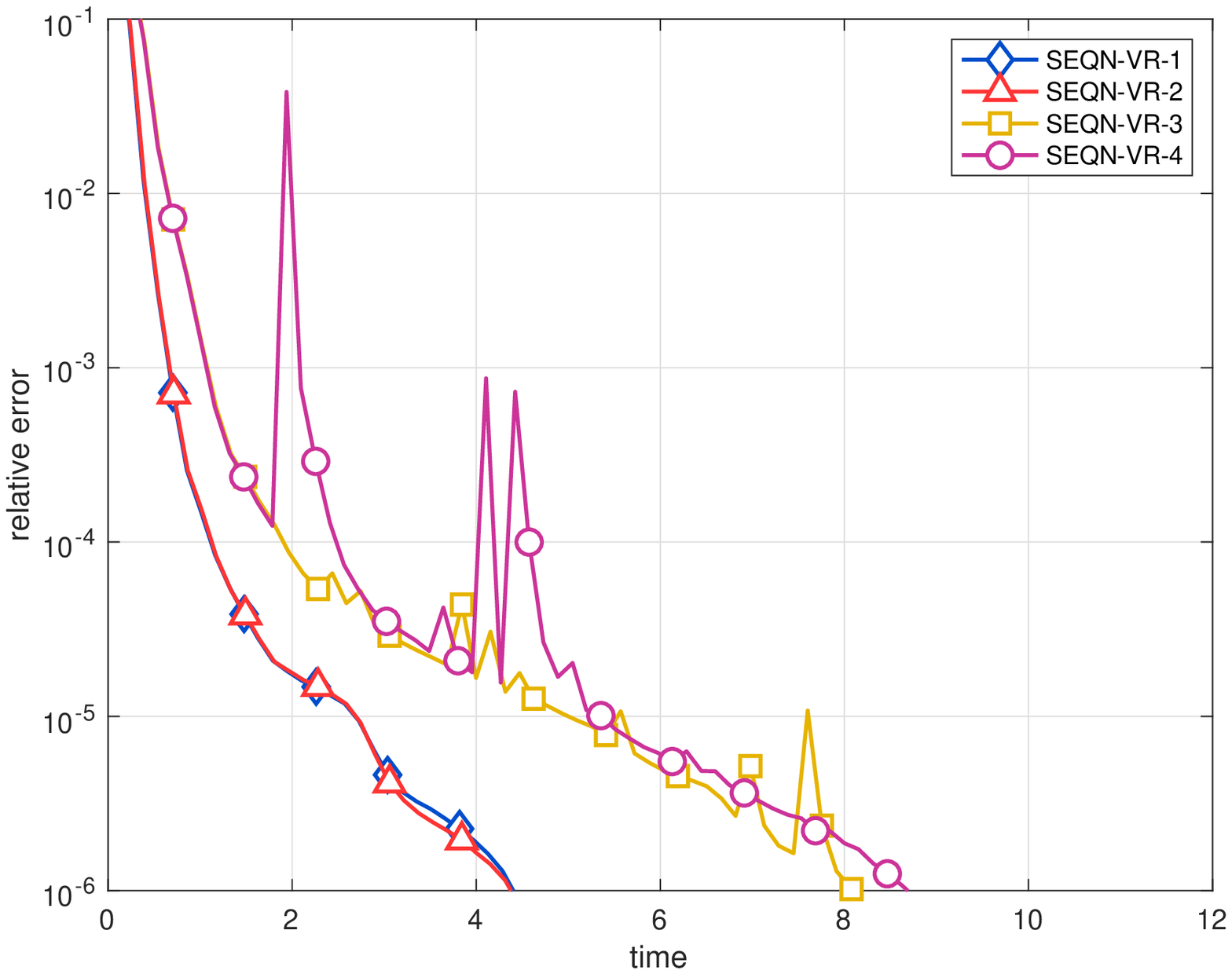}} \\
		\end{tabular}
	\caption{Comparison of different variants of SEQN.}
	\label{figure:variants}
\end{figure}
The results in Figure \ref{figure:variants} depict the average of the relative error versus the cpu-time over 10 independent runs. The performance of SEQN-VR-1 is slightly better than SEQN-VR-2. The two methods SEQN-VR-3 and SEQN-VR-4 are generally outperformed by SEQN-VR-1 and show small oscillations. In the next sections, we use SEQN-VR-1 as our main algorithm for comparisons.



\subsection{Additional Subspace Strategy} 

Next, we propose an additional subspace strategy to further robustify and improve the convergence of SEQN for problems with $n > N$. 
Inspired by the subspace optimization scheme developed in \cite{WenYinGolZha10}, we switch to a subspace phase if the current iterate is close to an optimal solution and solve the following subproblem:
\begin{equation}
\label{subproblem}
\text{find $x \in \Rn$ with} \quad F^\Lambda(x) = 0,\quad \text{s.t.} \quad x_i = x^{|j}_{i}, \quad \forall~i \in \mathcal{O}_{|j}, 
\end{equation}
where $\mathcal O_{|j} \subseteq [n]$ is a given index set. We again consider an approximate version of problem \eqref{subproblem} using the stochastic residual $F^\Lambda_v(x)$. The subspace phase is entered if the iterate $x^{|j}$ is nearly optimal and satisfies $\|F^{\Lambda_{|j}}_{v^{|j}}(x^{|j}) \|_{\infty} < \varepsilon_1$ for some $\varepsilon_1 > 0$. In this case, the set $\mathcal{O}_{|j}$ is chosen as
\[ \mathcal{O}_{|j} = \mathcal{O}(x^{|j};\varepsilon_2) := \{ i \in [n]: | x^{|j}_i| < \varepsilon_2 \}, \quad \varepsilon_2 > 0. \]
%
We keep the index set $ \mathcal{O}_{|j}$ fixed while solving the subproblem \eqref{subproblem} and its stochastic variant. If $|\mathcal{O}_{|j}|$ is large, the size of the problem and the number of active variables can be reduced significantly. We use SEQN-VR with stochastic L-BFGS updates (see section \ref{sto-full-qn}) and the initial point $x^{|j}$ to solve the stochastic subspace subproblem. A new set of curvature pairs is created to construct the L-BFGS directions. Let us notice that this subspace scheme can be interpreted as performing special coordinate-type quasi-Newton steps with $[W_{|r}]_{\mathcal O_{|j} \mathcal O_{|j}} = 0$ and $[v_{+}^{|r}]_{\mathcal O_{|j}} = 0$ for an inner iteration $r$, if the corresponding step sizes are chosen suitably.    
In our experiments, we only apply this strategy when the number of features is larger than the number of data points.

Figure \ref{figure:subpro} demonstrates the effect of the subspace strategy for $\mathtt{rcv1}$ and $\mathtt{news20}$. Clearly, the performance of the adjusted algorithm improves considerably. The version of SEQN-VR with subspace correction is up to ten times faster than the original SEQN-VR method. We leave the subspace phase when the stochastic residual is reduced sufficiently, i.e., if $\|F^{\Lambda_{|r}}_{v^{|r}}(x^{|r})\|/\lambda_{|r}\leq \min\{ 5\cdot10^{-7},0.01\cdot\|F^{\Lambda_{|j}}_{v^{|j}}(x^{|j})\|/\lambda_{|j}\}$, or if the number of inner iterations $r$ exceeds a certain value.

%
\begin{figure}[t]
	\centering
	\setlength{\belowcaptionskip}{-6pt}
	\begin{tabular}{cc}
		\hspace{-1.5ex}
		\subfloat[$\mathtt{rcv1}$]{
			\includegraphics[width=5.5cm]{./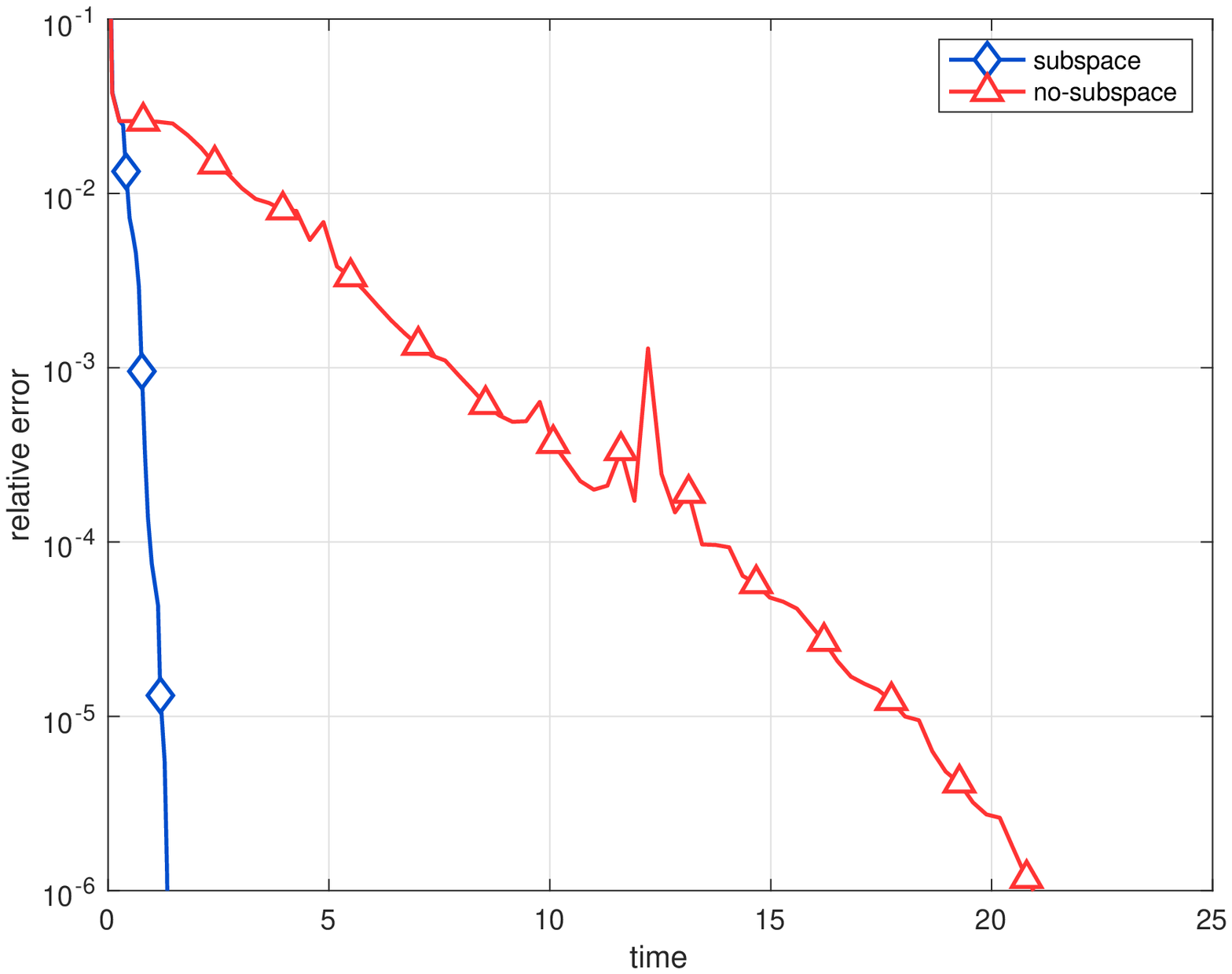}} &
			\subfloat[$\mathtt{news20}$]{
			\includegraphics[width=5.5cm]{./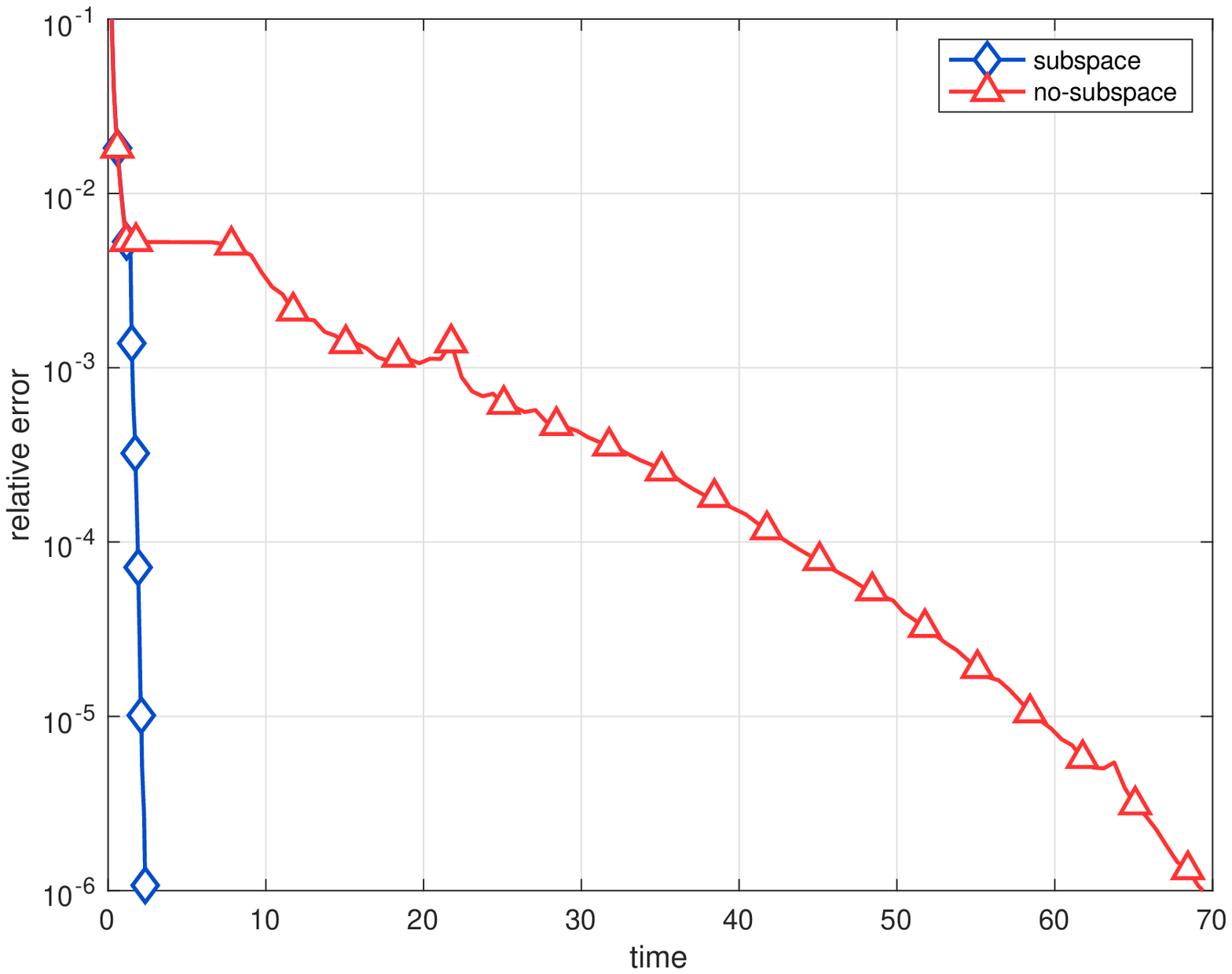}} \\
	\end{tabular}
	\caption{Performance with and without the subspace strategy while other parameters are fixed.}
	\label{figure:subpro}
\end{figure}


\subsection{Numerical Comparison} Next, we describe the implementational details of the different algorithms utilized in the numerical comparison.
\begin{itemize}[leftmargin=4ex]
\item LIBLINEAR, \cite{REF08a}, is a well-known software package for solving logistic regression problems. LIBLINEAR is based on the improved GLMNET method which is a proximal Newton-type algorithm. The source code is available at the LIBLINEAR website \cite{webliblinear}. We use the default parameters. \vspace{.5ex} 
\item \textrm{OW-LQN}, \cite{AndGao07}, is an extension of the L-BFGS method for solving the nonsmooth $\ell_1$-regularized optimization problem \eqref{eq:LR}. The original source code for OW-LQN uses the matrix market file format. In order to allow a fair comparison, we reimplement OW-LQN within the LIBLINEAR code framework. 
\item \textrm{Prox-SVRG}, \cite{XiaZha14}, is the vanilla stochastic proximal gradient method with variance reduction. 
The mini-batch size of the stochastic gradient is set to $1$ and the number of inner iterations is set to $\lfloor 1.5N\rfloor$. The step size is chosen as $1/L_f$ and we use a \textit{lazy update mechanism} for $\ell_1$-problems as suggested in \cite{konevcny2016mini,liu2018fast}. The code is implemented within the LIBLINEAR framework.\vspace{.5ex} 
%
%
\item \textrm{SEQN-VR} denotes the method SEQN-VR-1 that was presented in section \ref{subsec:comparison-variants}. We also incorporate the subspace strategy if the number of features $n$ of a dataset is larger than $N$. The code is implemented based on the LIBLINEAR framework.
\end{itemize}
It is worth mentioning that LIBLINEAR and OW-LQN are  specifically designed for $\ell_1$-problems, while Prox-SVRG and the proposed SEQN-VR method can be applied to other nonsmooth and nonconvex problems.

A summary of our computational results is shown in Table \ref{testpart3}. We report the cpu-time, the number of epochs, the total number of nonzeros and the relative error of the final output of each method. Here, one epoch denotes one full pass through the data and the relative error is defined as
 \begin{equation*}
\text{rel\_err}:=(\psi(x) - \psi^*)/\max\{1,|\psi^*|\},
\end{equation*} 
where the optimal function value $\psi^*$ is obtained by LIBLINEAR with $\mathtt{tol} = 10^{-10}$. 

The different methods terminate when the relative error is smaller than $10^{-6}$ or the number of iterations exceeds a certain threshold. As shown in Table \ref{testpart3}, SEQN-VR achieves the best performance in ``cpu-time'' and ``epochs'' for almost all datasets. It is especially efficient for datasets with a small number of features such as $\mathtt{higgs}$ and $\mathtt{susy}$. In the large-scale problems $\mathtt{kdda}$ and $\mathtt{url}$, the solution generated by SEQN-VR seems to be denser than the solution of LIBLINEAR and OW-LQN, but SEQN-VR still converges significantly faster in terms of cpu-time and epochs. 

In the Figures \ref{figure:relative_time} and \ref{figure:accu_time}, a plot of the change of the relative error and the testing accuracy with respect to the cpu-time is depicted. Again SEQN-VR outperforms most of the other methods and transition to fast local convergence can be observed for most of the tested datasets.
LIBLINEAR and OW-LQN seem to perform worse than the stochastic methods on the large-scale datasets. In Figure \ref{figure:accu_time}, the relationships between the testing accuracy and the training time is illustrated. In summary, the stochastic methods perform better than the deterministic methods on large-scale problems and the additional higher order information used in SEQN-VR allows to accelerate and improve the first order scheme Prox-SVRG.  
%
\begin{table}
\small
\centering
\begin{tabular}{|c| C{0.7cm} C{0.78cm} C{1.08cm} c| C{0.7cm} C{0.78cm} C{1.18cm} c |}
\hline
      solver &  rel\_err &     epochs&       time&       nnz&  rel\_err &     epochs&      time&       nnz \\ \hline 
& \multicolumn{4}{c|}{$\mathtt{cina}$}  &\multicolumn{4}{c|}{$\mathtt{url}$} \\ \hline 
 LIBLINEAR &     1e-07 &      4735 &       2.4\!\;s &       111 &     6e-07  &     55499 &   19936.4\!\;s &     19447 \\   
   SEQN-VR &     9e-07 &       \celliv{129} & \celliv{0.3\!\;s} &       112  &   1e-06 &    \celliv{5331} &   \celliv{11620.8\!\;s} &     21465  \\   
 Prox-SVRG &     1e-06 &       166 &       1.0\!\;s &       111 &     5e-06  &     25001 &   67483.6\!\;s &     22037 \\   
    OW-LQN &     1e-06 &      3481 &       7.5\!\;s &       112 &     1e-04  &     58186 &   82098.6\!\;s &     21203 \\  \hline 
& \multicolumn{4}{c|}{$\mathtt{a9a}$}  &\multicolumn{4}{c|}{$\mathtt{susy}$} \\ \hline 
 LIBLINEAR &     8e-08 &      7239 &       4.7\!\;s &        96 &     4e-07  &     10575 &    1615.4\!\;s &        18 \\   
   SEQN-VR &     9e-07 &        51 &       \celliv{0.1\!\;s} &        99 &     4e-07  &        \celliv{20} &      \celliv{13.2\!\;s} &        18 \\   
 Prox-SVRG &     4e-07 &        \celliv{41} &       0.3\!\;s &       104 &     4e-07  &        41 &      49.5\!\;s &        18 \\   
    OW-LQN &     1e-06 &      3516 &      11.6\!\;s &        99 &     1e-06  &     10409 &    6853.4\!\;s &        18 \\  \hline 
& \multicolumn{4}{c|}{$\mathtt{ijcnn1}$}  &\multicolumn{4}{c|}{$\mathtt{higgs}$} \\ \hline 
 LIBLINEAR &     5e-07 &       149 &       0.2\!\;s &        22 &     3e-07  &      1521 &     756.0\!\;s &        27 \\   
   SEQN-VR &     3e-07 &        \celliv{17} &       \celliv{0.1\!\;s} &        22 &     1e-07  &        \celliv{14} &      \celliv{25.6\!\;s} &        28 \\   
 Prox-SVRG &     5e-07 &        51 &       0.3\!\;s &        22 &     9e-07  &        74 &     264.8\!\;s &        28 \\   
    OW-LQN &     3e-07 &        34 &       0.2\!\;s &        22 &     1e-06  &       852 &    1684.2\!\;s &        28 \\  \hline 
& \multicolumn{4}{c|}{$\mathtt{covtype}$}  &\multicolumn{4}{c|}{$\mathtt{news20}$} \\ \hline 
 LIBLINEAR &     9e-07 &     63509 &     559.2\!\;s &        52 &     1e-07  &        89 &       2.5\!\;s &       417 \\   
   SEQN-VR &     9e-07 &        95 &       \celliv{4.4\!\;s} &        53 &     7e-07  &        \celliv{71} &       \celliv{2.4\!\;s} &       419 \\   
 Prox-SVRG &     6e-07 &        \celliv{86} &       7.7\!\;s &        52 &     8e-07  &        81 &       7.5\!\;s &       418 \\   
    OW-LQN &     2e-06 &     56823 &    3022.6\!\;s &        52 &     1e-06  &       137 &      19.1\!\;s &       417 \\  \hline 
& \multicolumn{4}{c|}{$\mathtt{rcv1}$}  &\multicolumn{4}{c|}{$\mathtt{kdda}$} \\ \hline 
 LIBLINEAR &     5e-07 &        \celliv{57} &       \celliv{0.2\!\;s} &       560 &     9e-07  &     58945 &   77437.0\!\;s &    854569 \\   
   SEQN-VR &     5e-07 &        73 &       1.4\!\;s &       563 &     1e-06  &       \celliv{477} &    \celliv{6043.6\!\;s} &    910031 \\   
 Prox-SVRG &     8e-07 &        76 &       1.7\!\;s &       560 &     1e-06  &      1549 &   11800.6\!\;s &    908249 \\   
    OW-LQN &     9e-07 &        67 &       0.9\!\;s &       560 &     1e-06  &     46632 &  120067.4\!\;s &    905523 \\  \hline 
\end{tabular}
\caption{Numerical results: $\ell_1$-logistic regression. For each dataset, the best performance with respect to number of epochs and cpu-time is shaded with a grey color.}
\label{testpart3}
\end{table}

\begin{figure}
\centering
\setlength{\belowcaptionskip}{-6pt}
\begin{tabular}{cc}
\subfloat[$\mathtt{cina}$]{
\includegraphics[height=3.75cm]{./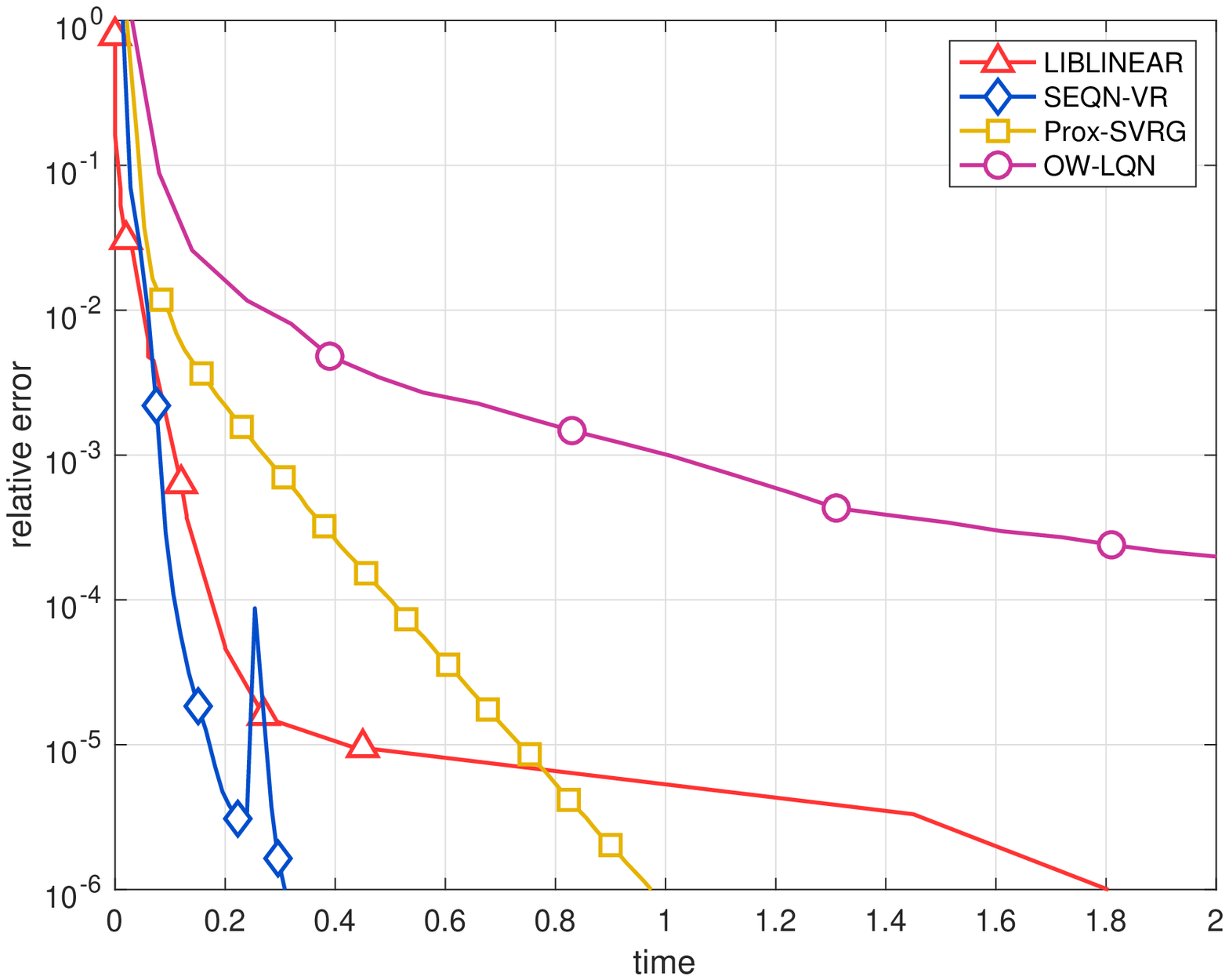}} &
\subfloat[$\mathtt{a9a}$]{
\includegraphics[height=3.75cm]{./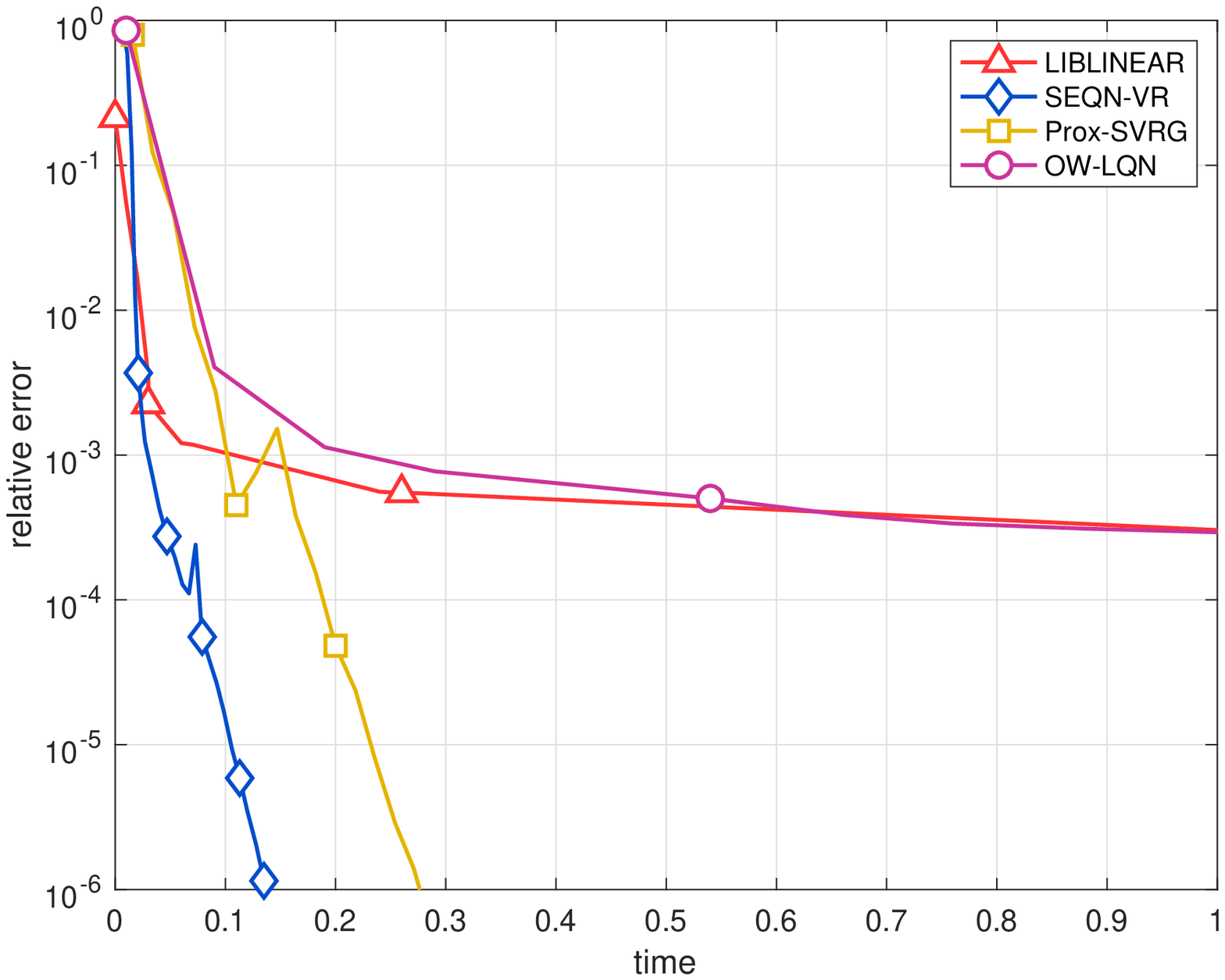}}\\
\subfloat[$\mathtt{ijcnn1}$]{
\includegraphics[height=3.75cm]{./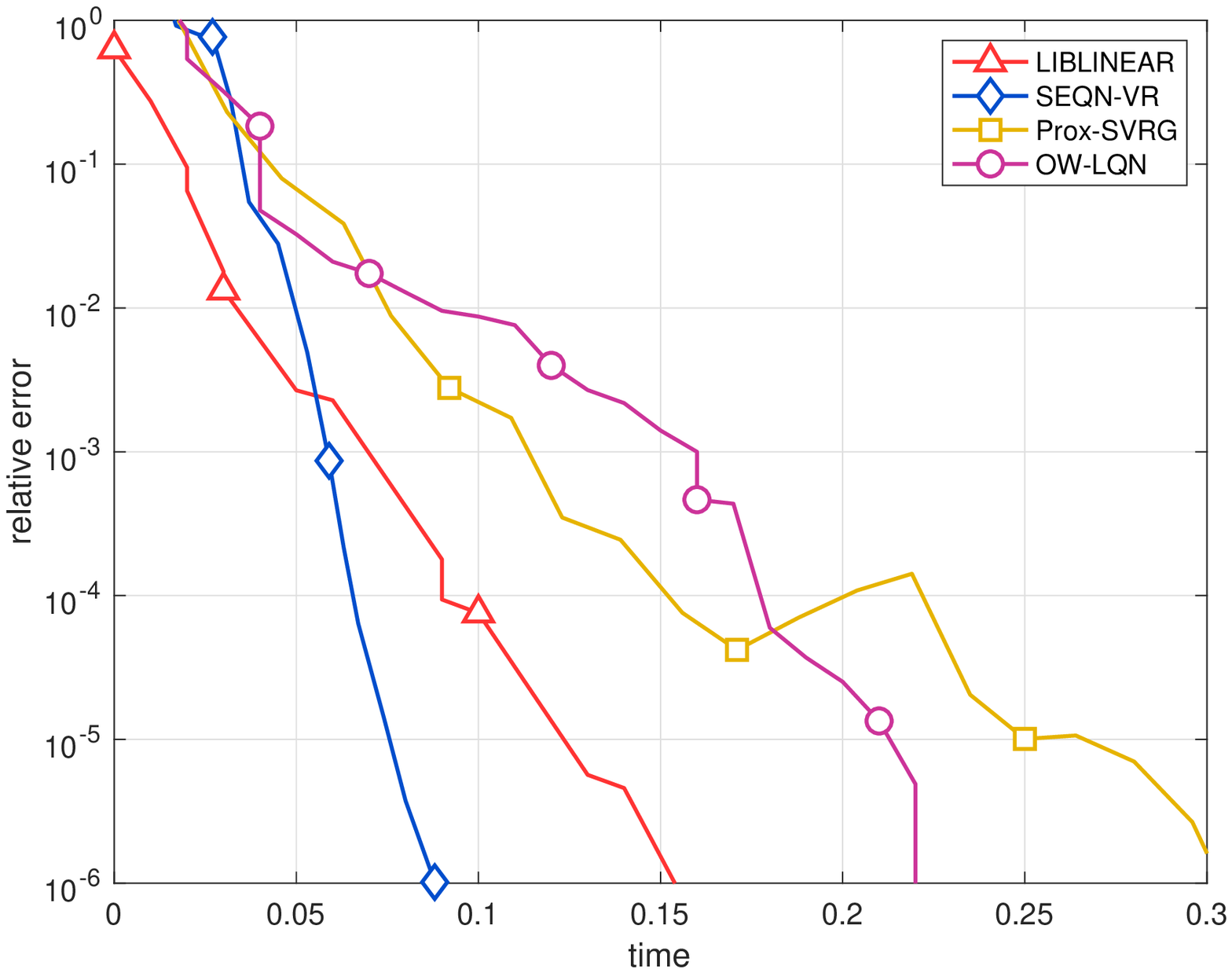}} &
\subfloat[$\mathtt{covtype}$]{
\includegraphics[height=3.75cm]{./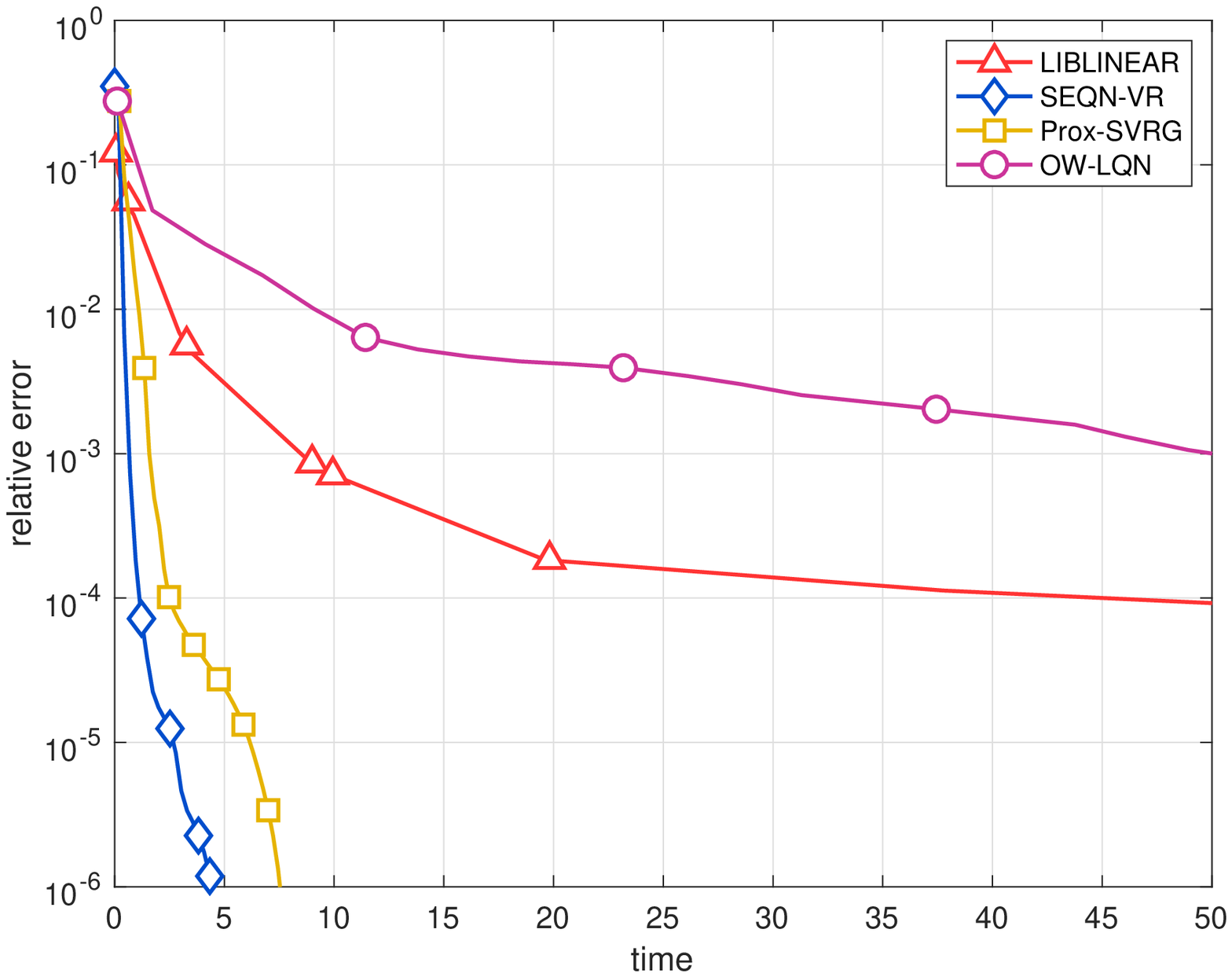}} \\
\subfloat[$\mathtt{url}$]{
\includegraphics[height=3.75cm]{./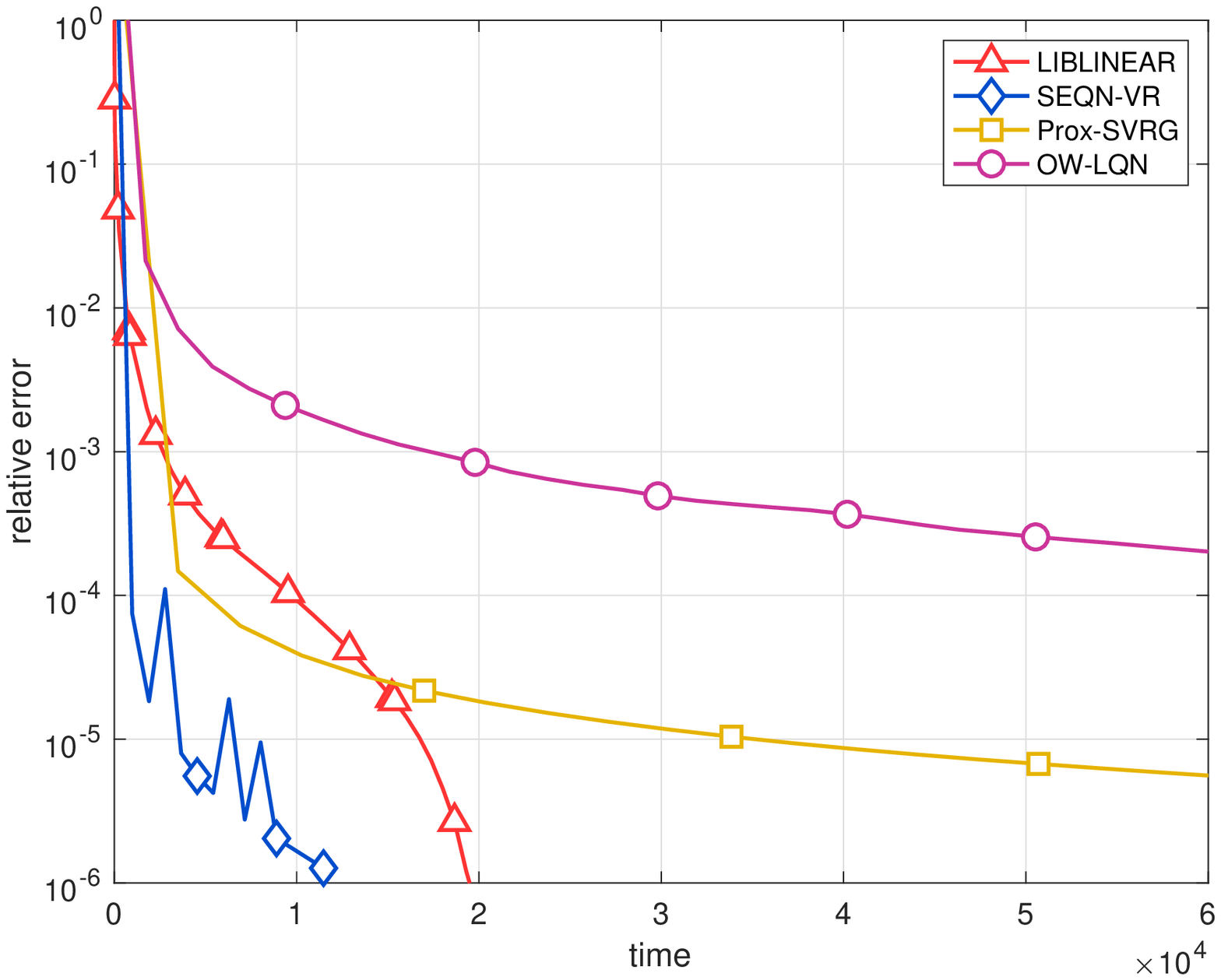}} &
\subfloat[$\mathtt{susy}$]{
\includegraphics[height=3.75cm]{./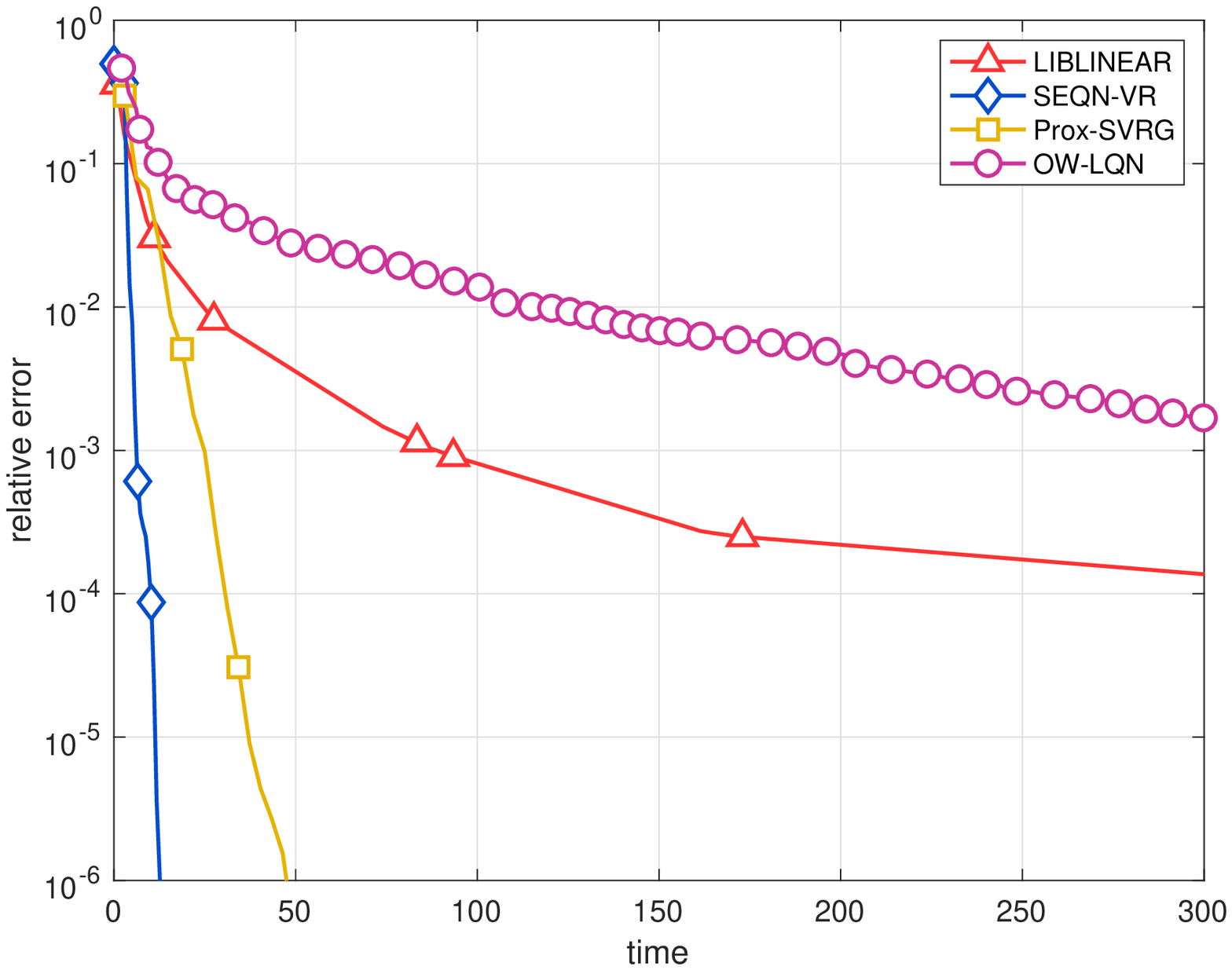}} \\
\subfloat[$\mathtt{higgs}$]{
\includegraphics[height=3.75cm]{./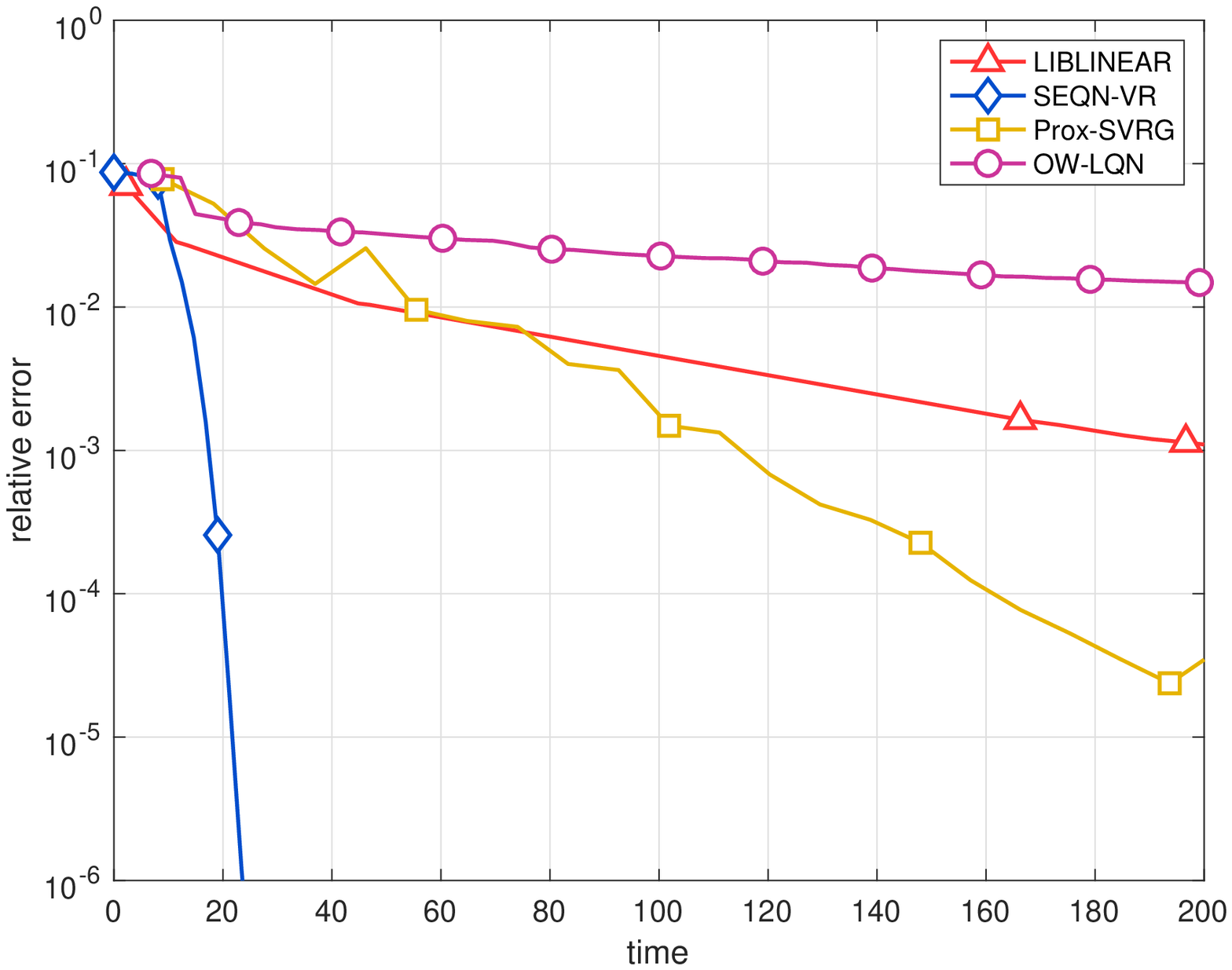}} &
\subfloat[$\mathtt{news20}$]{
\includegraphics[height=3.75cm]{./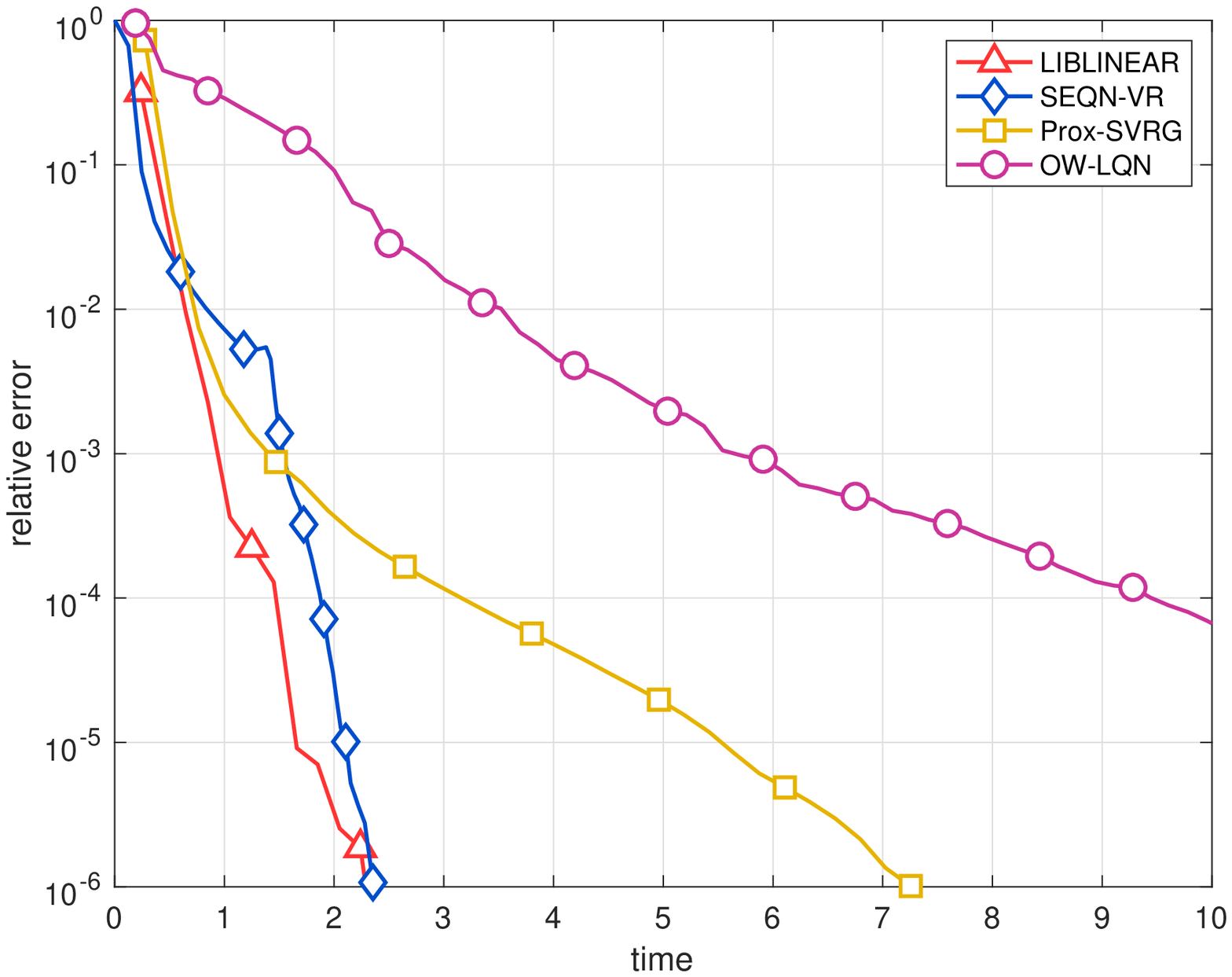}} 
\end{tabular}
\caption{Iteration history of the relative difference to the optimal function values versus the training time on $\ell_1$-logistic regression.}
\label{figure:relative_time}
\end{figure}

\begin{figure}
\centering
\setlength{\belowcaptionskip}{-6pt}
\begin{tabular}{cc}
\subfloat[$\mathtt{cina}$]{
\includegraphics[height=3.75cm]{./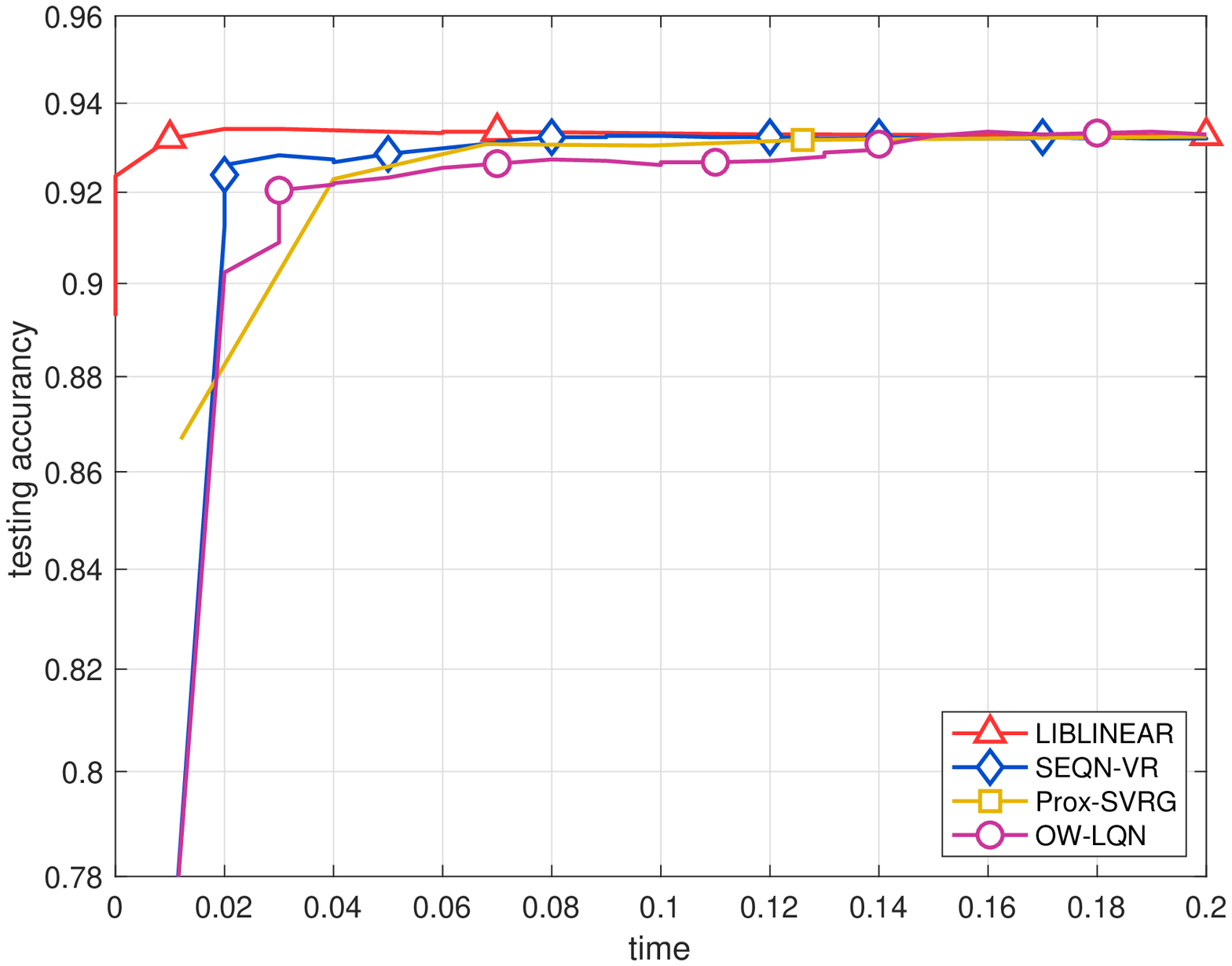}} &
\subfloat[$\mathtt{a9a}$]{
\includegraphics[height=3.75cm]{./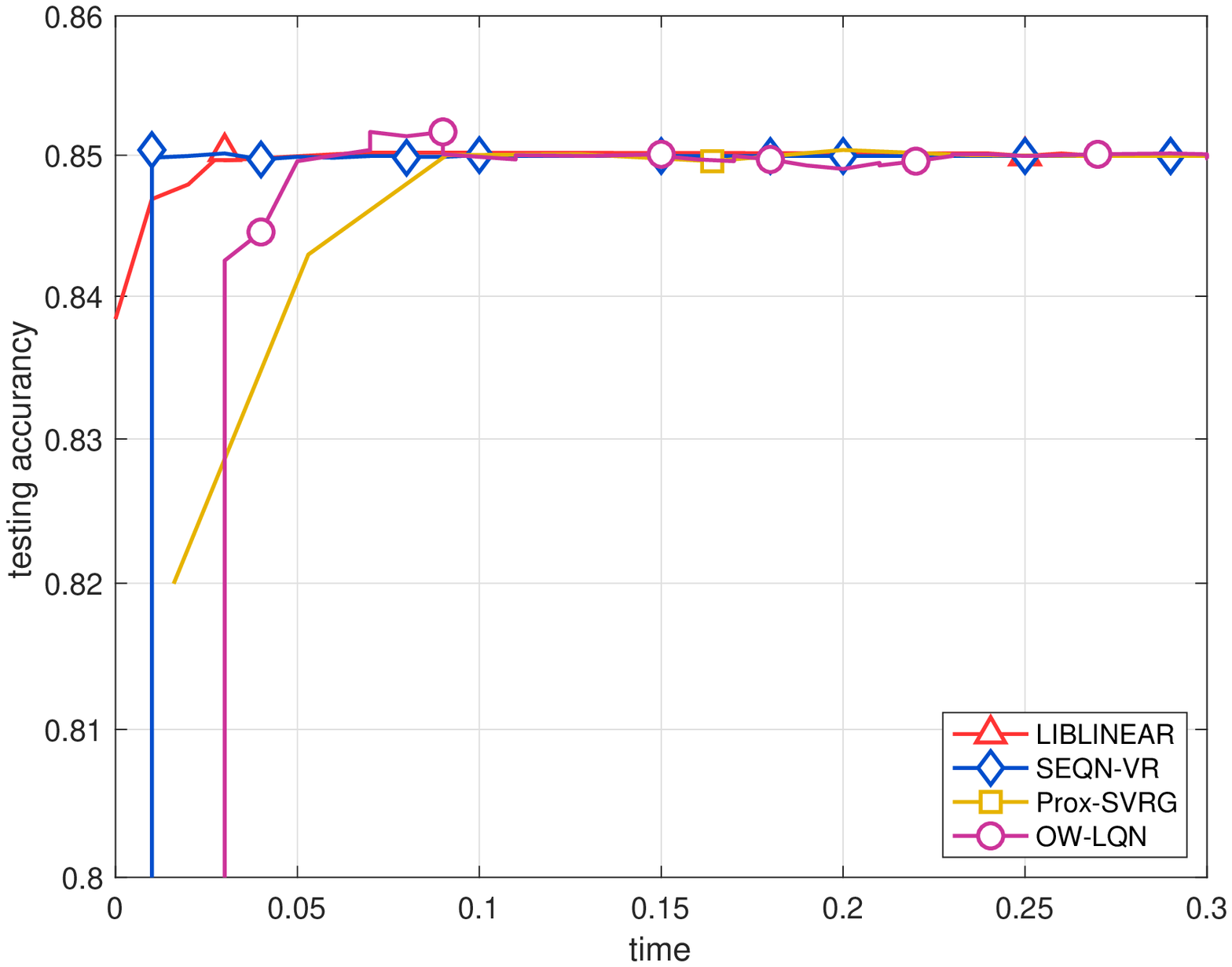}}\\
\subfloat[$\mathtt{ijcnn1}$]{
\includegraphics[height=3.75cm]{./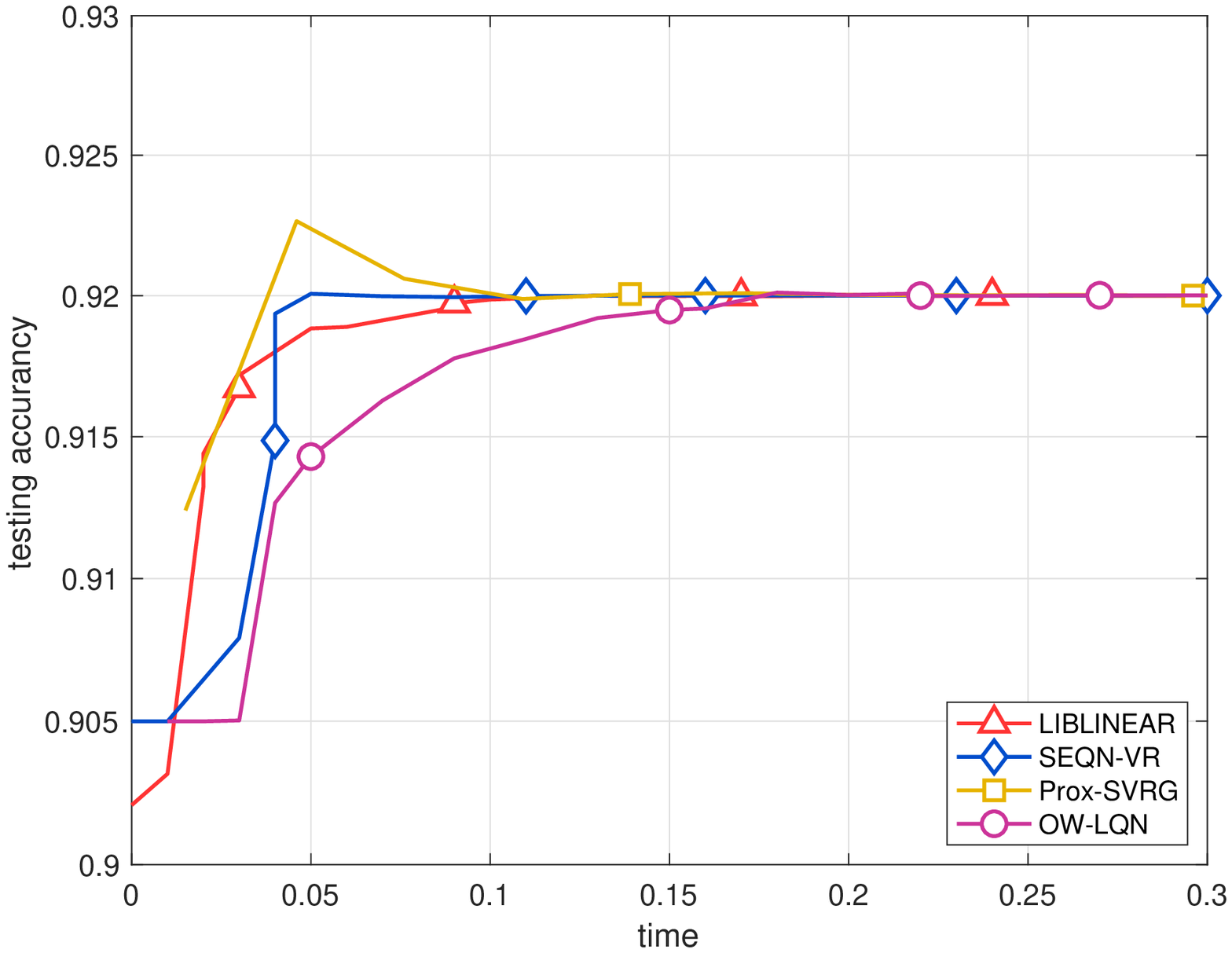}} &
\subfloat[$\mathtt{covtype}$]{
\includegraphics[height=3.75cm]{./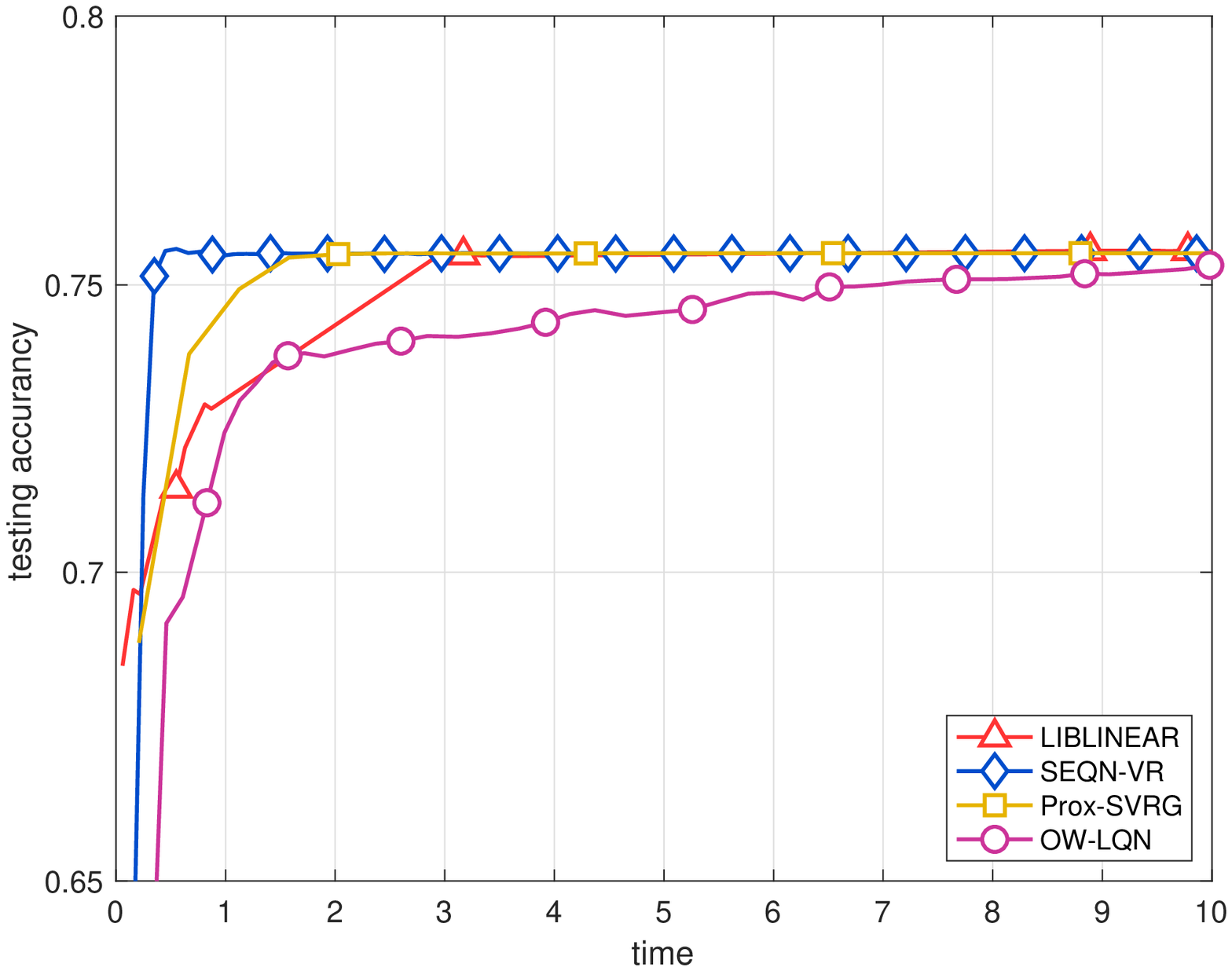}} \\
\subfloat[$\mathtt{url}$]{
\includegraphics[height=3.75cm]{./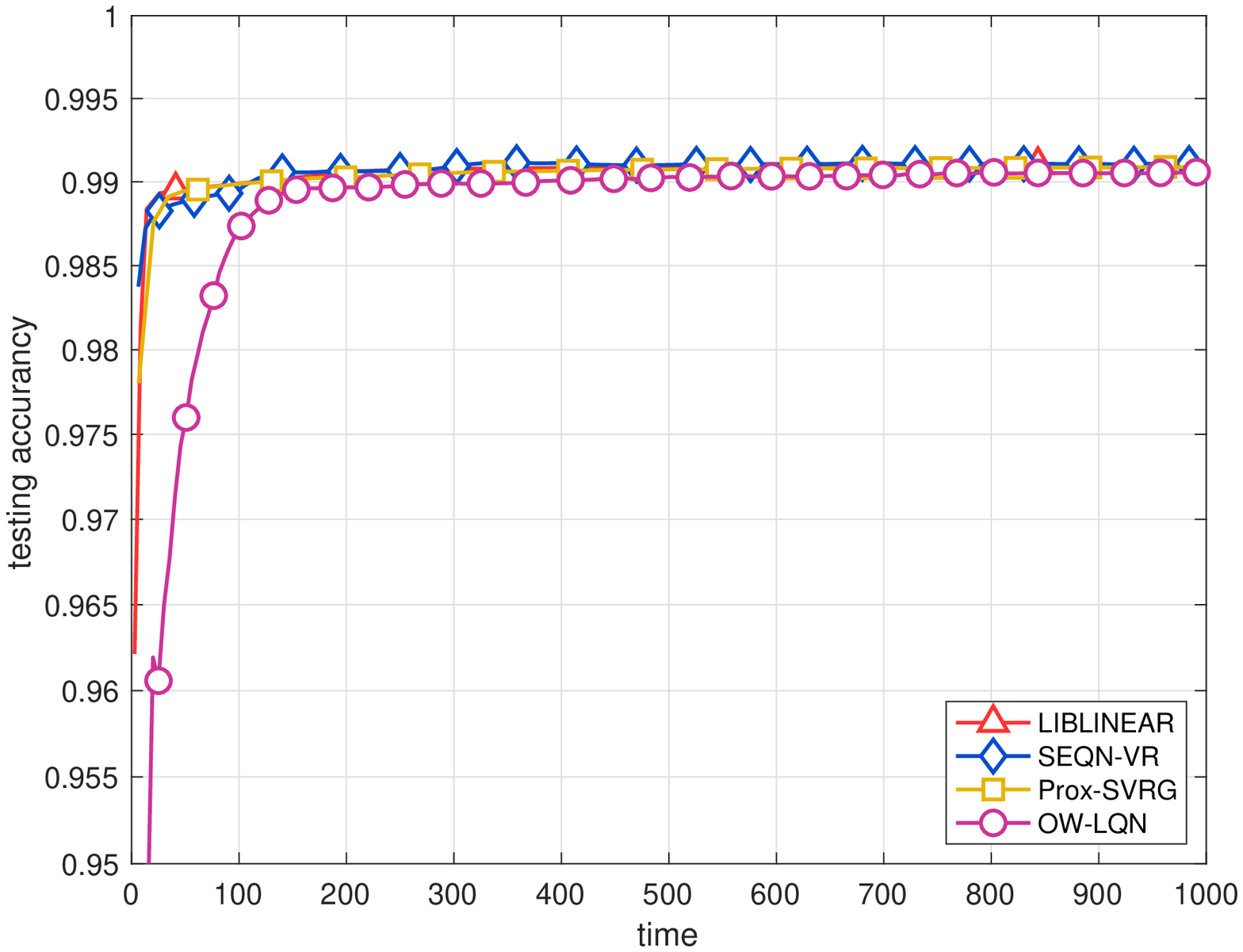}} &
\subfloat[$\mathtt{susy}$]{
\includegraphics[height=3.75cm]{./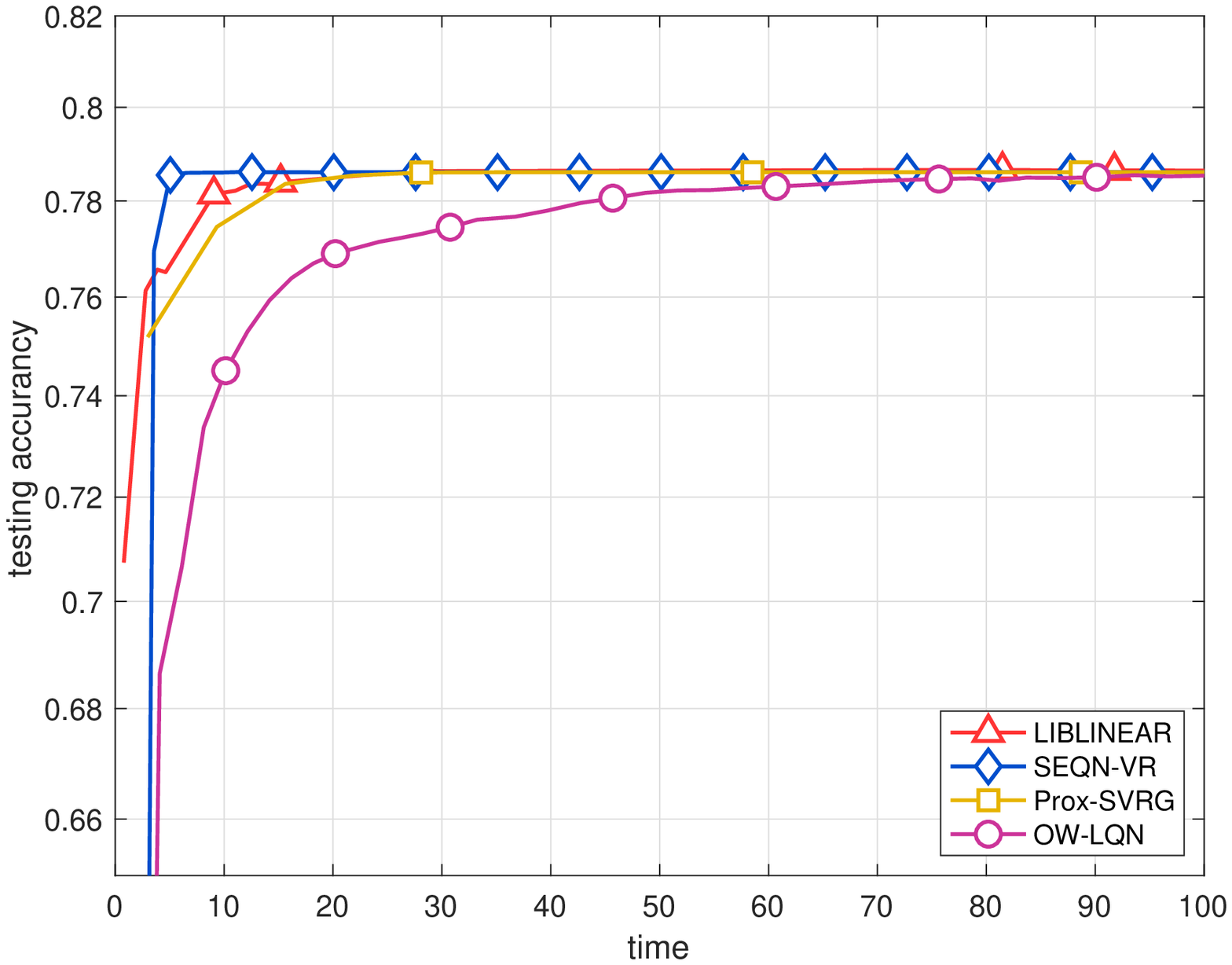}} \\
\subfloat[$\mathtt{higgs}$]{
\includegraphics[height=3.75cm]{./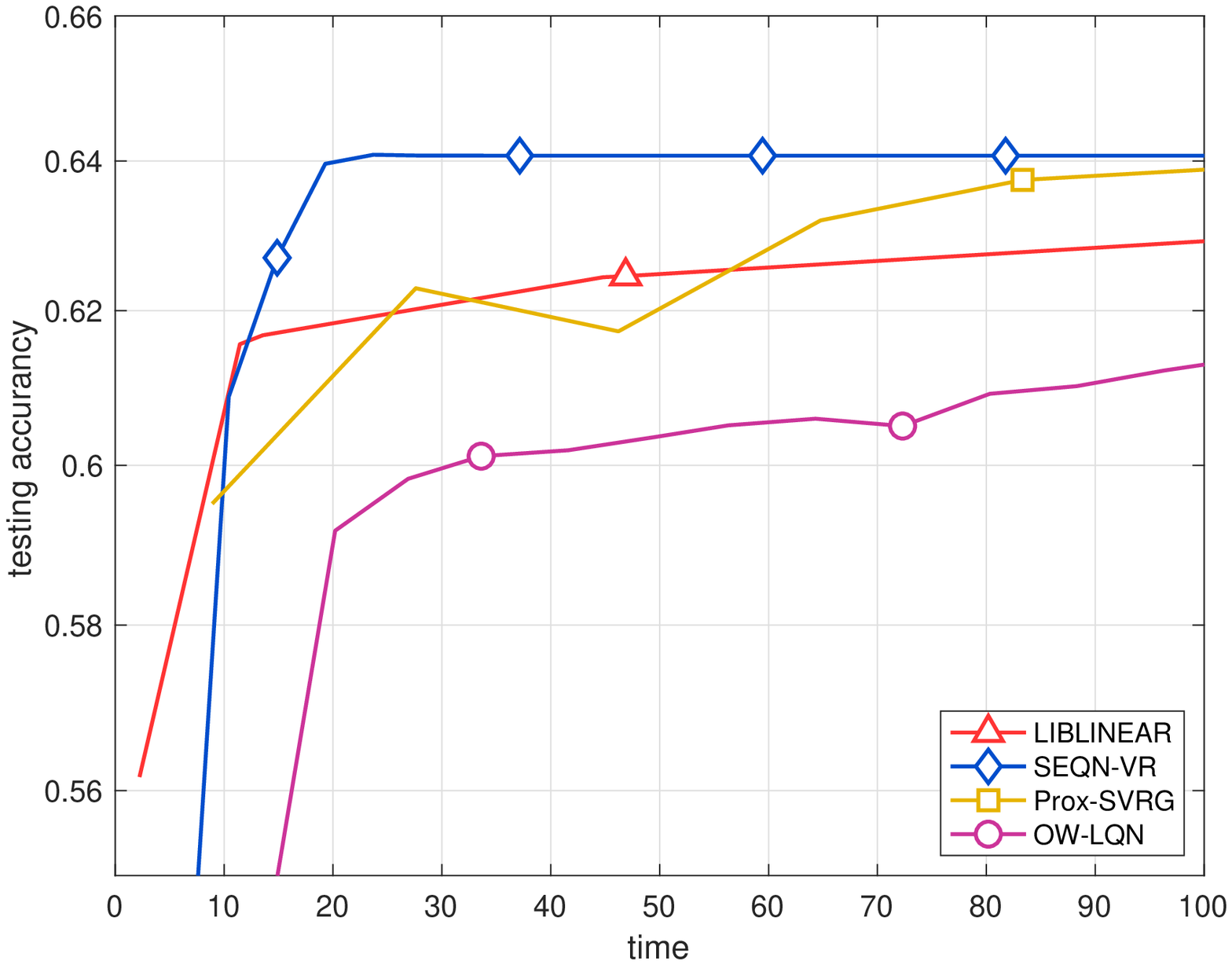}} &
\subfloat[$\mathtt{news20}$]{
\includegraphics[height=3.75cm]{./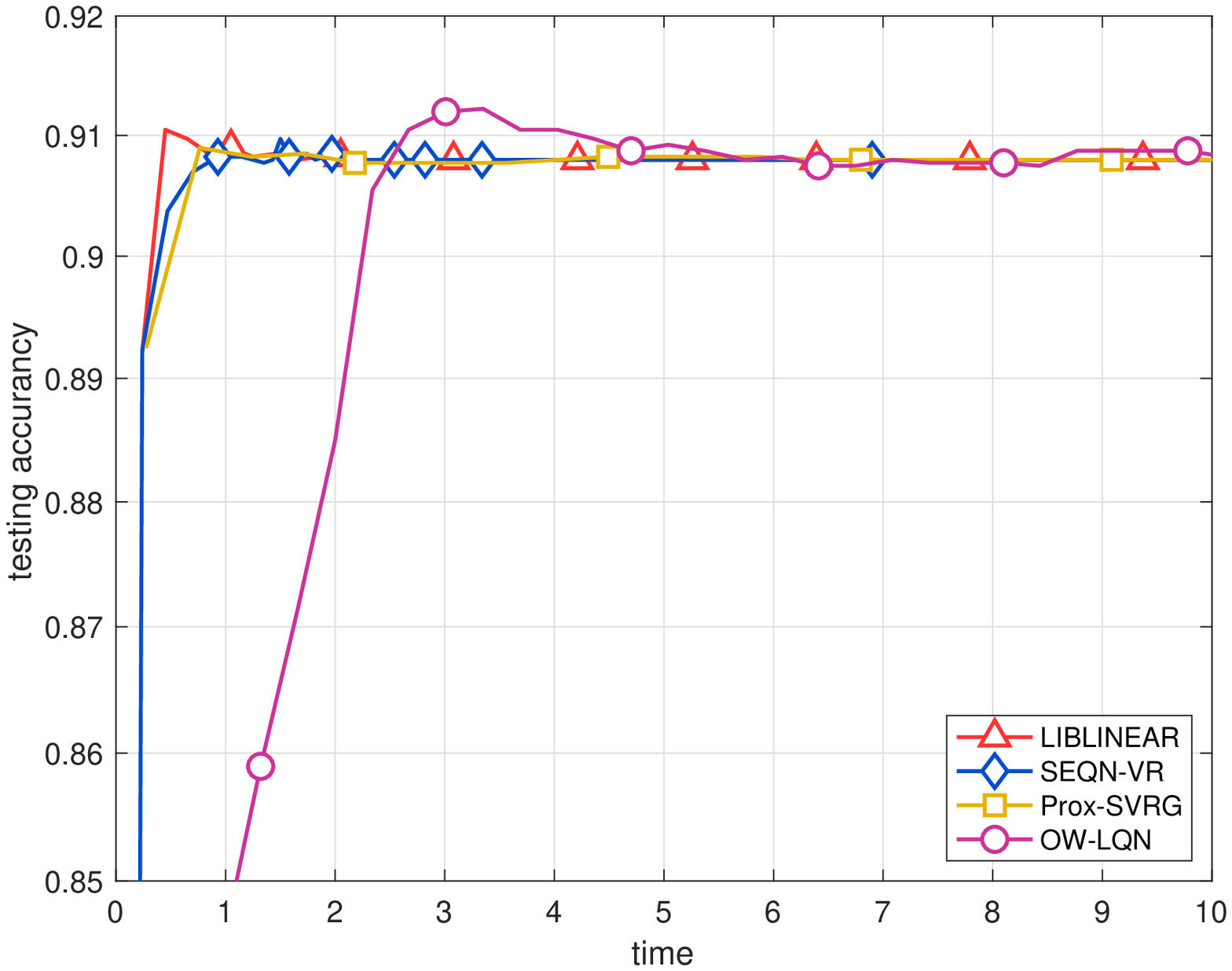}}\\
\end{tabular}
\caption{Plot of the testing accuracy versus the training time on $\ell_1$-logistic regression.}
\label{figure:accu_time}
\end{figure}


\section{Preliminary Numerical Results: Deep Learning}
\vspace{-2ex}
Although deep learning is a very successful methodology for many machine learning applications and tasks, it typically leads to highly challenging optimization problems due to the inherently large number of network parameters. A natural idea is to use the $\ell_1$-regularization to prune neural networks. In this section, we study the performance of SEQN on deep neural networks and present preliminary numerical comparisons.

\subsection{Datasets and Experimental Setting}
We test SEQN on both small and large neural networks. $\mathtt{ConvNet}$ is a small 4-layer network consisting of two convolution layers followed by two fully connected layers. For large neural networks, we consider $\mathtt{VGG}$-$\mathtt{16}$ \cite{simonyan2014very} with batch normalization and $\mathtt{ResNet}$-$\mathtt{18}$ \cite{he2016deep}. They are widely used in computer vision and other applications.
We test our proposed algorithm on the $\mathtt{Cifar}$-$\mathtt{10}$ dataset which consists of 50000 training images and 10000 testing images. The different number of parameters of the neural networks are summarized in Table \ref{table:cifar-num-param}. All the experiments are performed in PyTorch \cite{paszke2017automatic}. As loss function, we use the $\ell_1$-regularized cross entropy function:
\[
\min_{x}~\psi(x) := -\frac{1}{N} \sum_{i=1}^N \log\left(\frac{\exp(h(x,a_i)[b_i])}{\sum_j \exp(h(x,a_i)[j])}\right) + \mu \|x\|_1, \quad \mu > 0,
\]
where all the weight matrices of the neural network are concatenated in one vector $x$, $(a_i,b_i)$, $i \in [N]$, denotes the training dataset, and $h$ is the composition of all matrix operations and activation functions.
  We choose $\mu = 10^{-4}$ for $\mathtt{ConvNet}$ and $\mathtt{VGG}$-$\mathtt{16}$ and $\mu = 2 \cdot 10^{-5}$ for $\mathtt{ResNet}$-$\mathtt{18}$. 

%
\subsection{Numerical Results}
We compare SEQN with Prox-SGD. Both methods utilize the mini-batch-type oracles 
\be \label{eq:exp-oracle} v^k = \nabla f_{\cS_k}(x^k) = \frac{1}{|\cS_k|} \sum_{i \in \cS_k} \nabla f_i(x^k), \quad v^k_+ = v_z^k = \nabla f_{\cS_k}(z^k), \ee
 where $\cS_k$ is again randomly and uniformly sampled from $[N]$. The parameters of Prox-SGD are standard. The batch size is set to $|\cS_k| = 128$
and the initial learning rate of Prox-SGD is determined by a grid search. It is 0.01 for $\mathtt{VGG}$-$\mathtt{16}$ and 0.1 for both $\mathtt{ResNet}$-$\mathtt{18}$ and $\mathtt{ConvNet}$. The learning rate is decreased by a factor of 0.1 after 150 epochs. We implement the initial version of SEQN (Algorithm \ref{alg:seqn}) in this section. We set $\alpha_k = \beta_k = 1$ and $\Lambda_{k,+} = \lambda_{k}^{-1}I$, $\Lambda_k$ =  0.5$\cdot\lambda_k^{-1} I$ where $\lambda_k$ is the learning rate which is adjusted as in Prox-SGD. We also adopt the batch progressive strategy and utilize Prox-SGD as a warm start for SEQN. The parameter $\mu$ is set to a larger value in the initial stage and is gradually decreased until it matches the predefined values. 
We run 200 epochs in each experiment and shuffle the samples after every complete pass over the full dataset. Here, the generation of a sample set $\cS_k$ increases the number of epochs by $|\cS_k| /N$. Let us also notice that the computational costs of a single iteration of SEQN are typically twice as high as the costs of an iteration of Prox-SGD (if the same mini-batch size is used) due to the additional evaluation of $v^k_+ = v^k_z$ in the quasi-Newton updates. We report the training accuracy, training loss, and testing accuracy for the three different network architectures in Figure \ref{test_neural}.

\begin{table}[t]
\centering
\begin{tabular}{|c|c|}
\hline
 & $\mathtt{Cifar}$-$\mathtt{10}$ \\
 \hline
\# Total parametes of $\mathtt{ConvNet}$ &413882\\
\# Total parametes of $\mathtt{VGG}$-$\mathtt{16}$ &15253578\\
\# Total parameters of $\mathtt{ResNet}$-$\mathtt{18}$ &11173962\\
\hline
\end{tabular}
\caption{Total numbers of the neural network parameters for $\mathtt{Cifar}$-$\mathtt{10}$}
\label{table:cifar-num-param}
\end{table}

\begin{table}[t]
\small
\centering
\begin{tabular}{|c|cccc|cccc|}
\hline
\multicolumn{9}{|c|}{$\mathtt{VGG}$-$\mathtt{16}$}\\
\hline
 {}&\multicolumn{4}{c|}{SEQN} &\multicolumn{4}{c|}{Prox-SGD} \\
 \hline
Epochs &50& 100& 150& 200&50& 100& 150& 200\\
Testing acc. &91.23& 91.21& 91.51& 91.82 &86.98& 89.44& 89.37& 92.17\\
Train. loss &0.92& 0.80& 0.74& 0.73 &1.56& 0.97& 0.85& 0.61\\
Train. acc. &96.08& 97.77& 98.87& 99.12 &90.75& 93.89& 95.30& 99.57\\
Train. time &865.8& 1771.5& 2516.9& 3197.0 &830.1& 1659.9& 2489.2& 3318.2\\
\hline 
Best test. acc.&\multicolumn{4}{c|}{91.98} &\multicolumn{4}{c|}{92.52} \\
\hline
\end{tabular}
\caption{Summary of the computational results on $\mathtt{VGG}$-$\mathtt{16}$}
\label{vggnet-table}
\end{table}
\begin{table}
\small
\centering
\begin{tabular}{|c|cccc|cccc|}
\hline
\multicolumn{9}{|c|}{$\mathtt{ResNet}$-$\mathtt{18}$}\\
\hline
&\multicolumn{4}{c|}{SEQN} &\multicolumn{4}{c|}{Prox-SGD} \\
 \hline
Epochs &70& 100& 140& 200&70& 100& 140& 200\\
Testing acc. &92.14& 92.35& 92.80& 92.79 &90.80& 91.21& 92.03& 93.48\\
Train. loss &0.32& 0.28& 0.25& 0.24 &0.50& 0.47& 0.43& 0.30\\
Train. acc. &98.87& 99.27& 99.85& 99.91 &97.10& 97.42& 98.19& 99.99\\
Train. time &2562.6& 3091.8& 3660.2& 4438.4 &1631.3& 2326.0& 3261.3& 4658.5\\\hline 
Best test. acc.&\multicolumn{4}{c|}{93.02} &\multicolumn{4}{c|}{93.65} \\
\hline
\end{tabular}
\caption{Summary of the computational results on $\mathtt{ResNet}$-$\mathtt{18}$}
\label{Resnet-table}
\end{table}

\begin{figure}[t]
\centering
\setlength{\belowcaptionskip}{-6pt}
\subfloat[$\mathtt{ConvNet}$: Train. accuracy]{\includegraphics[width=3.8cm,trim={0.5cm 0 1cm 1cm},clip]{./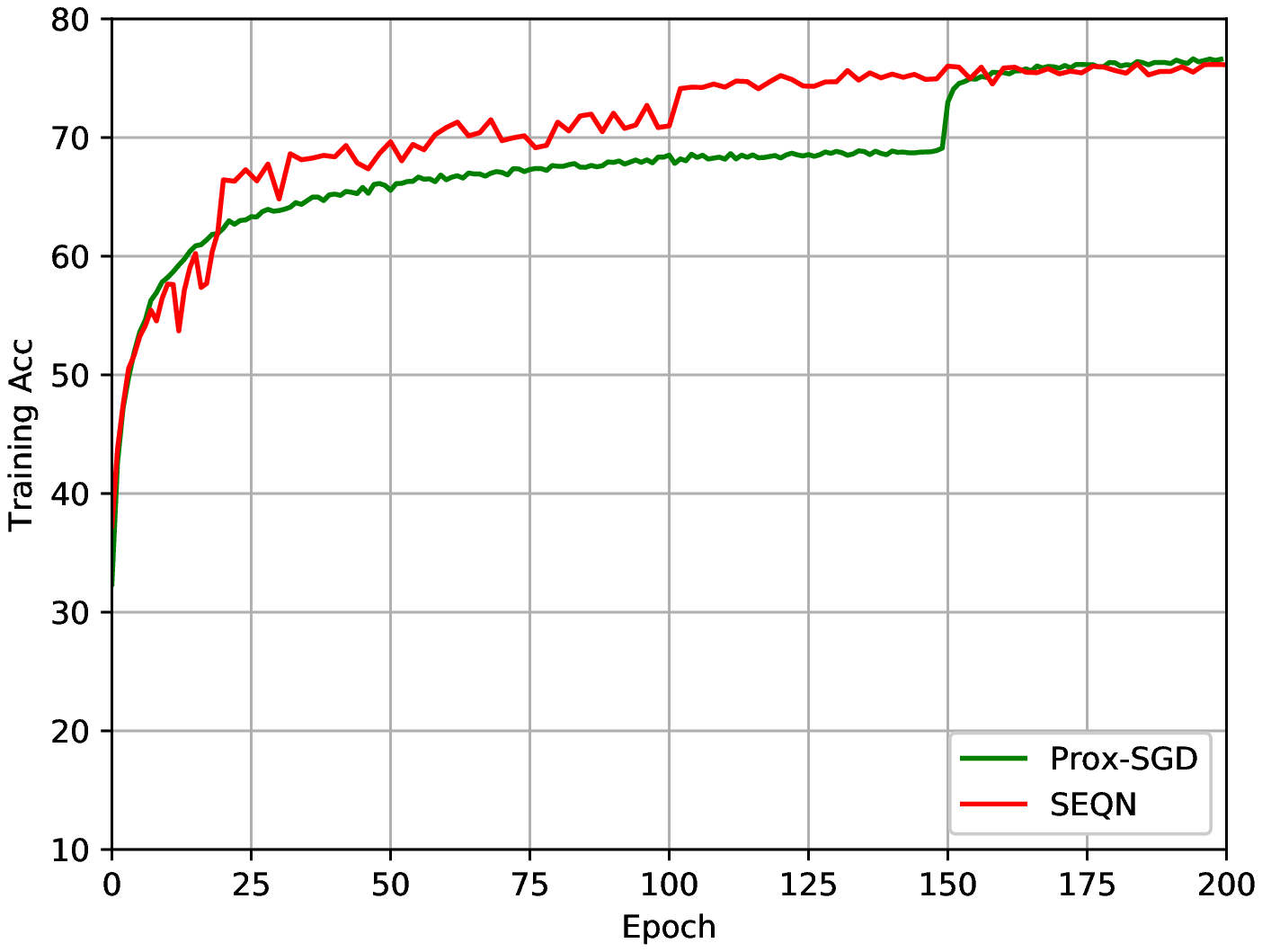}} \hspace{.5ex}
\subfloat[$\mathtt{VGG}$-$\mathtt{16}$: Train. accuracy]{\includegraphics[width=3.8cm,trim={0.5cm 0 1cm 1cm},clip]{./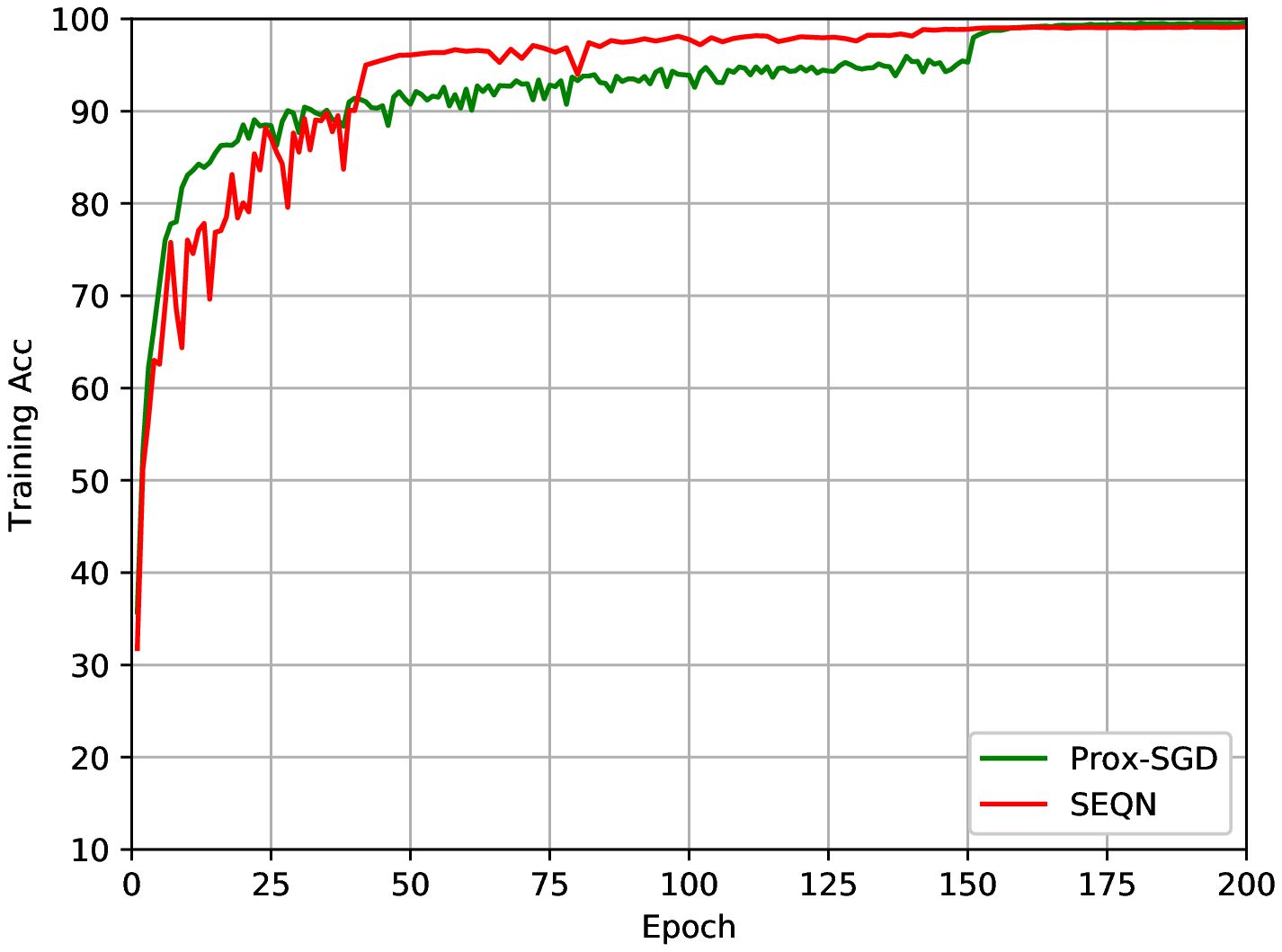}} \hspace{.5ex} 
\subfloat[$\mathtt{ResNet}$-$\mathtt{18}$: Train. accuracy]{\includegraphics[width=3.8cm,trim={0.5cm 0 1cm 1cm},clip]{./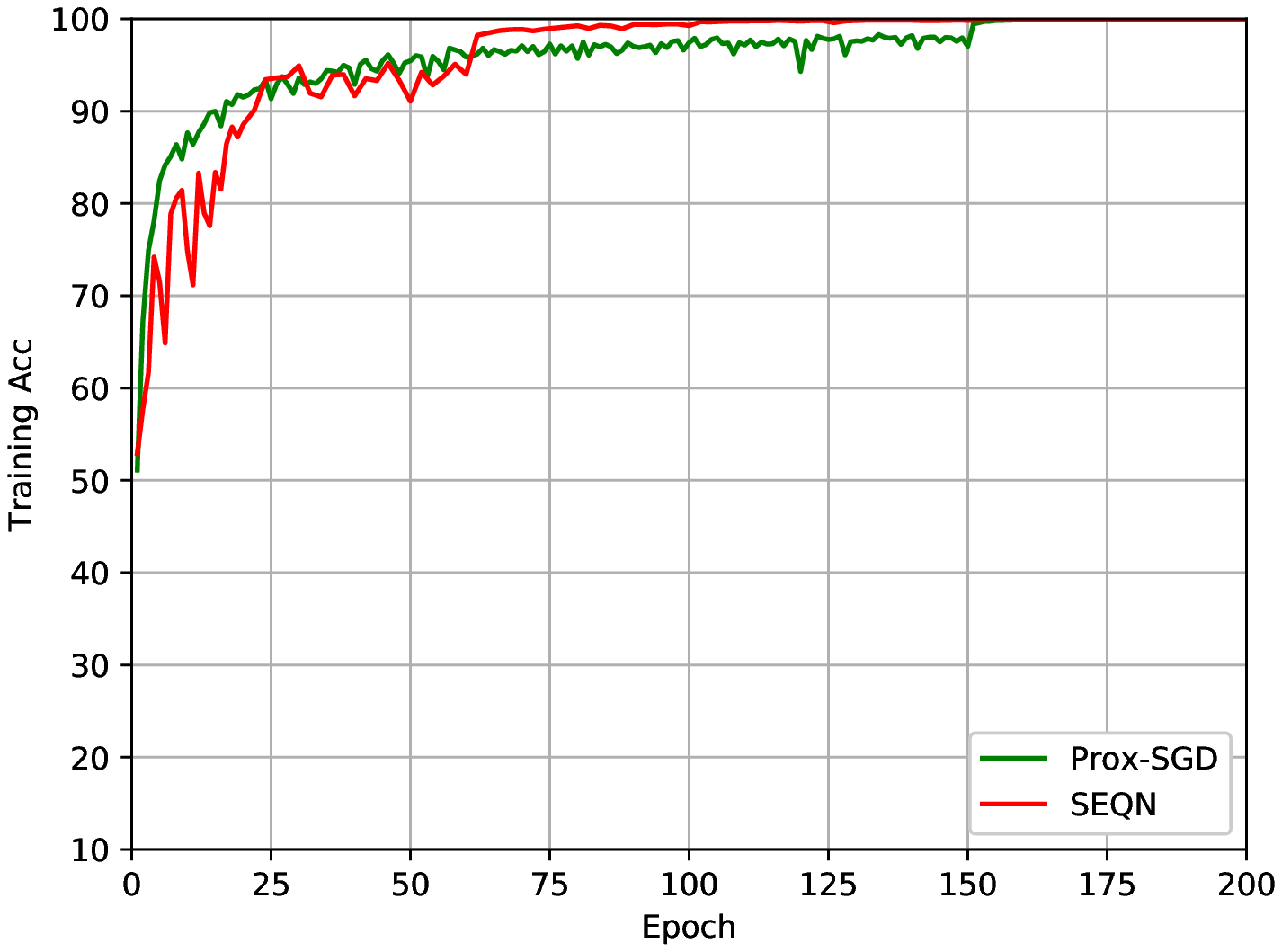}} \\
\subfloat[$\mathtt{ConvNet}$: Train. loss]{\includegraphics[width=3.8cm,trim={0.5cm 0 1cm 1cm},clip]{./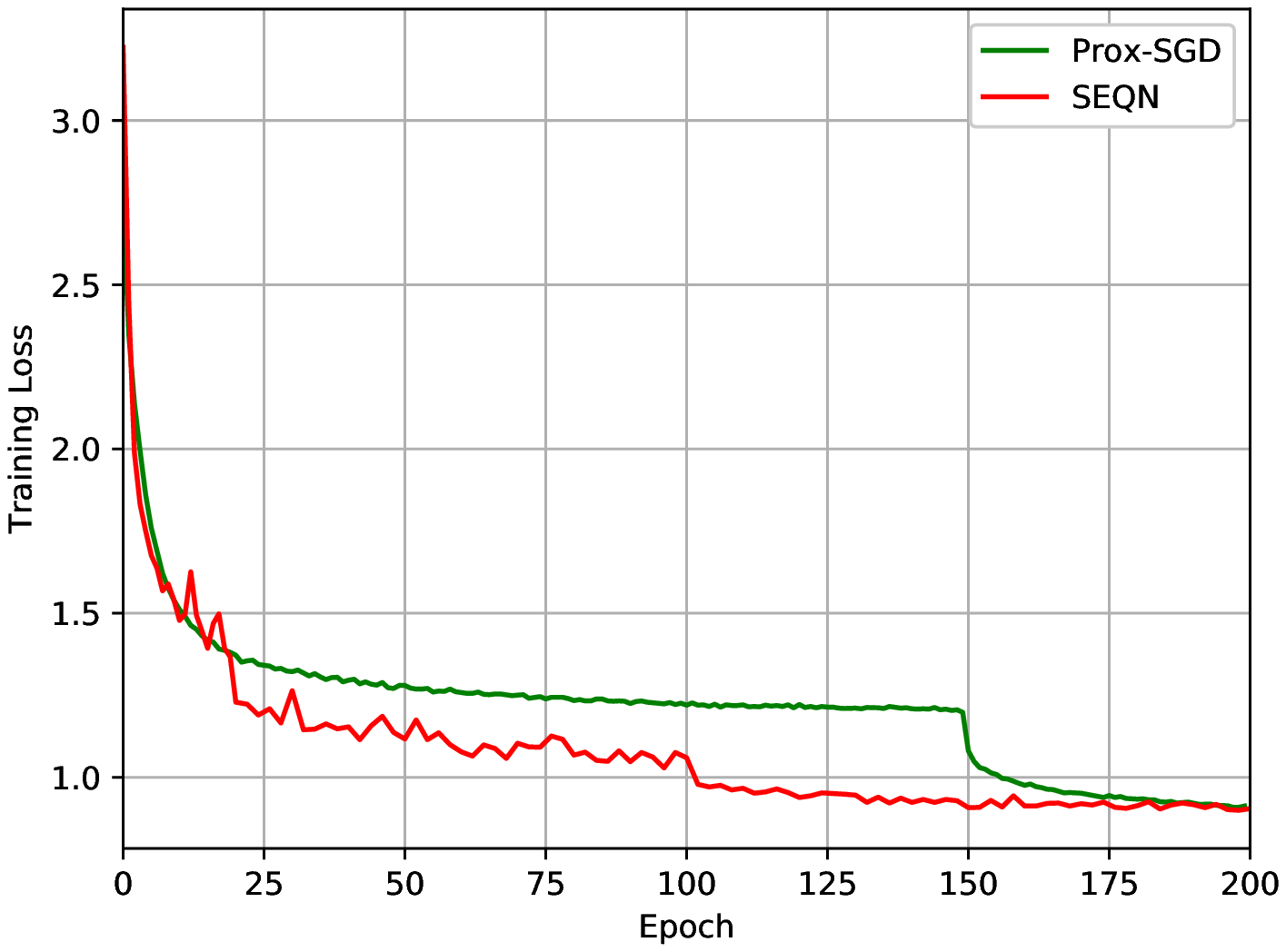}} \hspace{.5ex} 
\subfloat[$\mathtt{VGG}$-$\mathtt{16}$: Train. loss]{\includegraphics[width=3.8cm,trim={0.5cm 0 1cm 1cm},clip]{./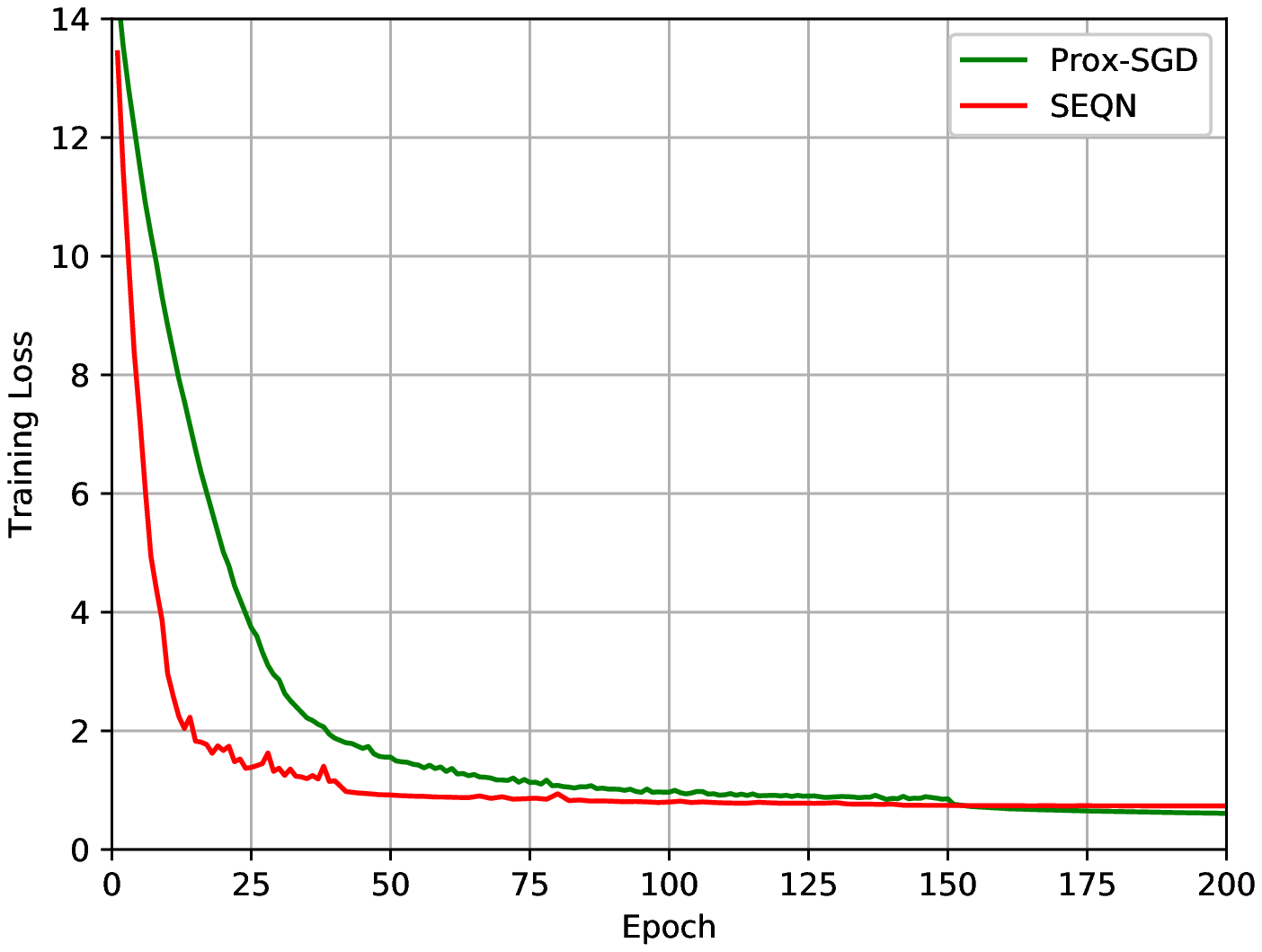}} \hspace{.5ex}
\subfloat[$\mathtt{ResNet}$-$\mathtt{18}$: Train. loss]{\includegraphics[width=3.8cm,trim={0.5cm 0 1cm 1cm},clip]{./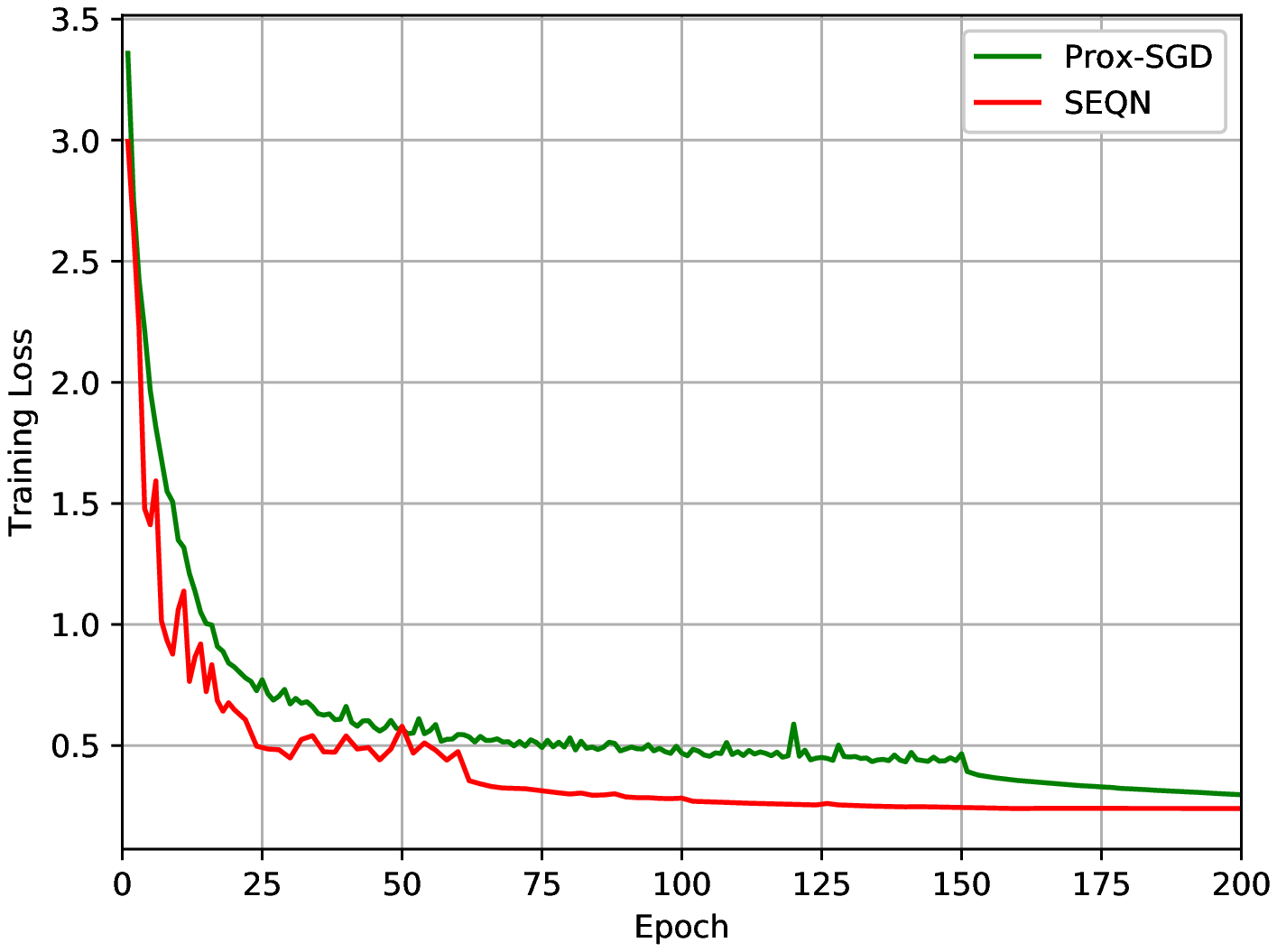}} \\
\subfloat[$\mathtt{ConvNet}$: Testing accuracy]{\includegraphics[width=3.8cm,trim={0.5cm 0 1cm 1cm},clip]{./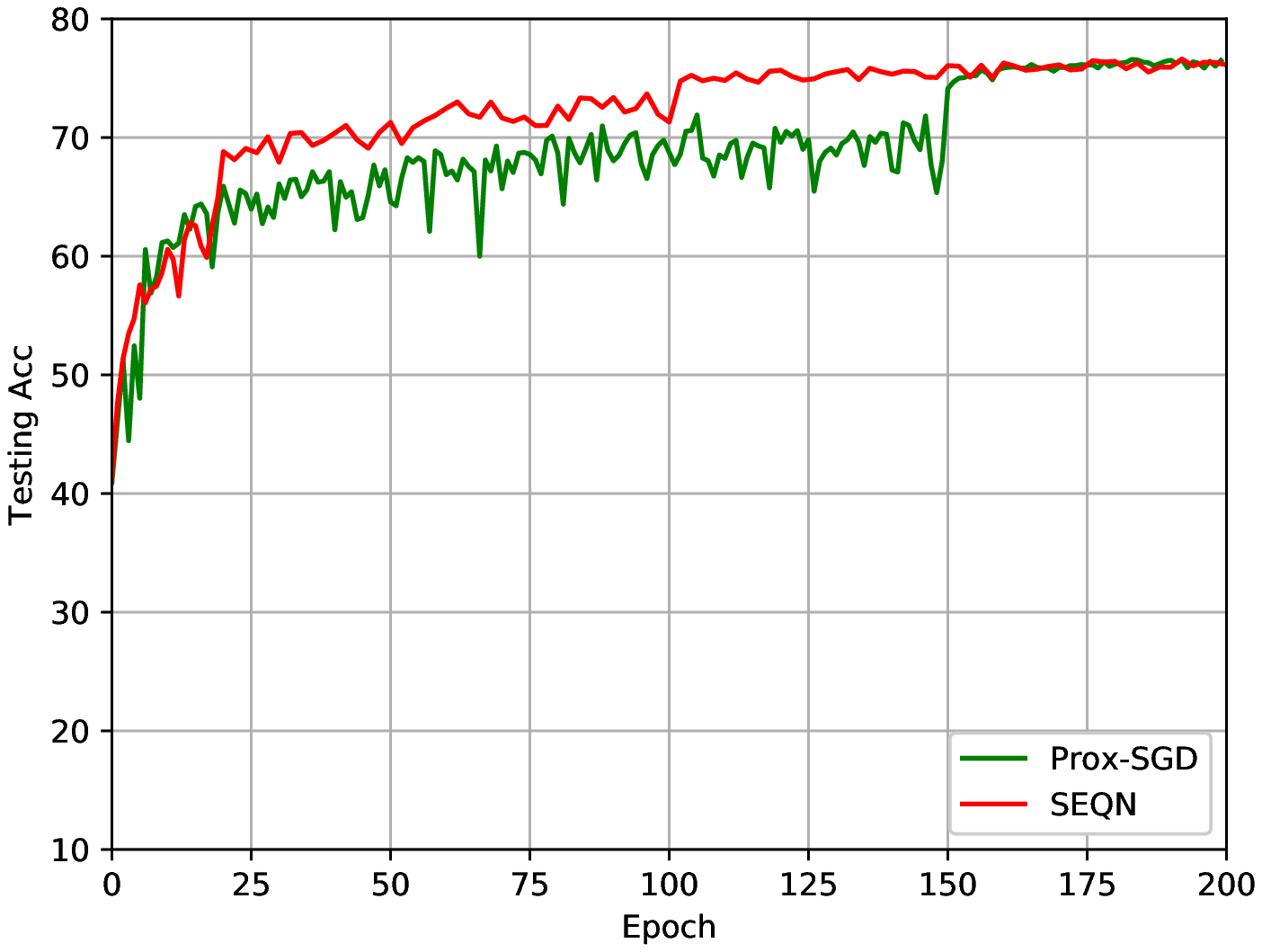}} \hspace{.5ex}
\subfloat[$\mathtt{VGG}$-$\mathtt{16}$: Testing accuracy]{\includegraphics[width=3.8cm,trim={0.5cm 0 1cm 1cm},clip]{./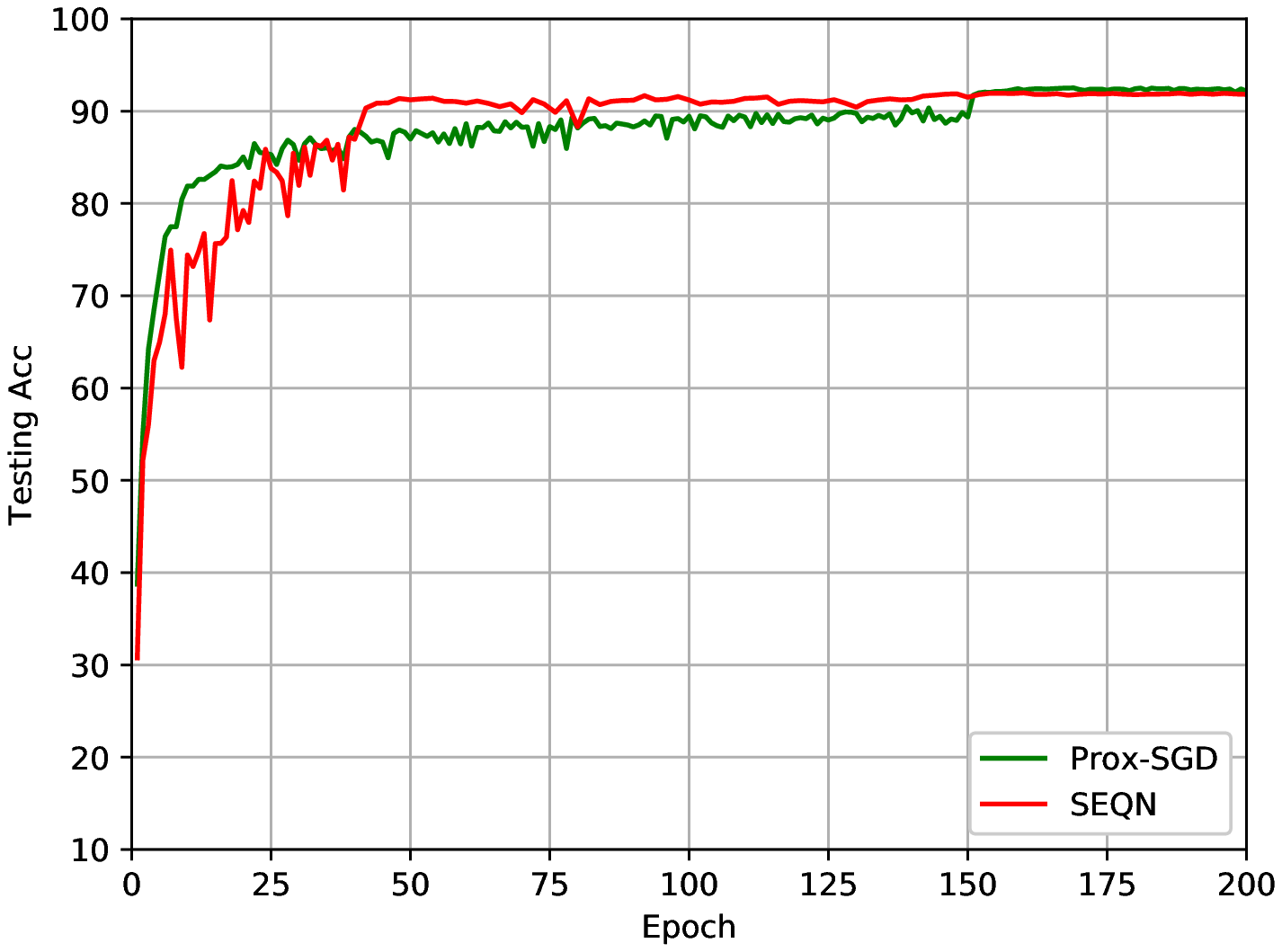}} \hspace{.5ex}
\subfloat[$\mathtt{ResNet}$-$\mathtt{18}$: Testing accuracy]{\includegraphics[width=3.8cm,trim={0.5cm 0 1cm 1cm},clip]{./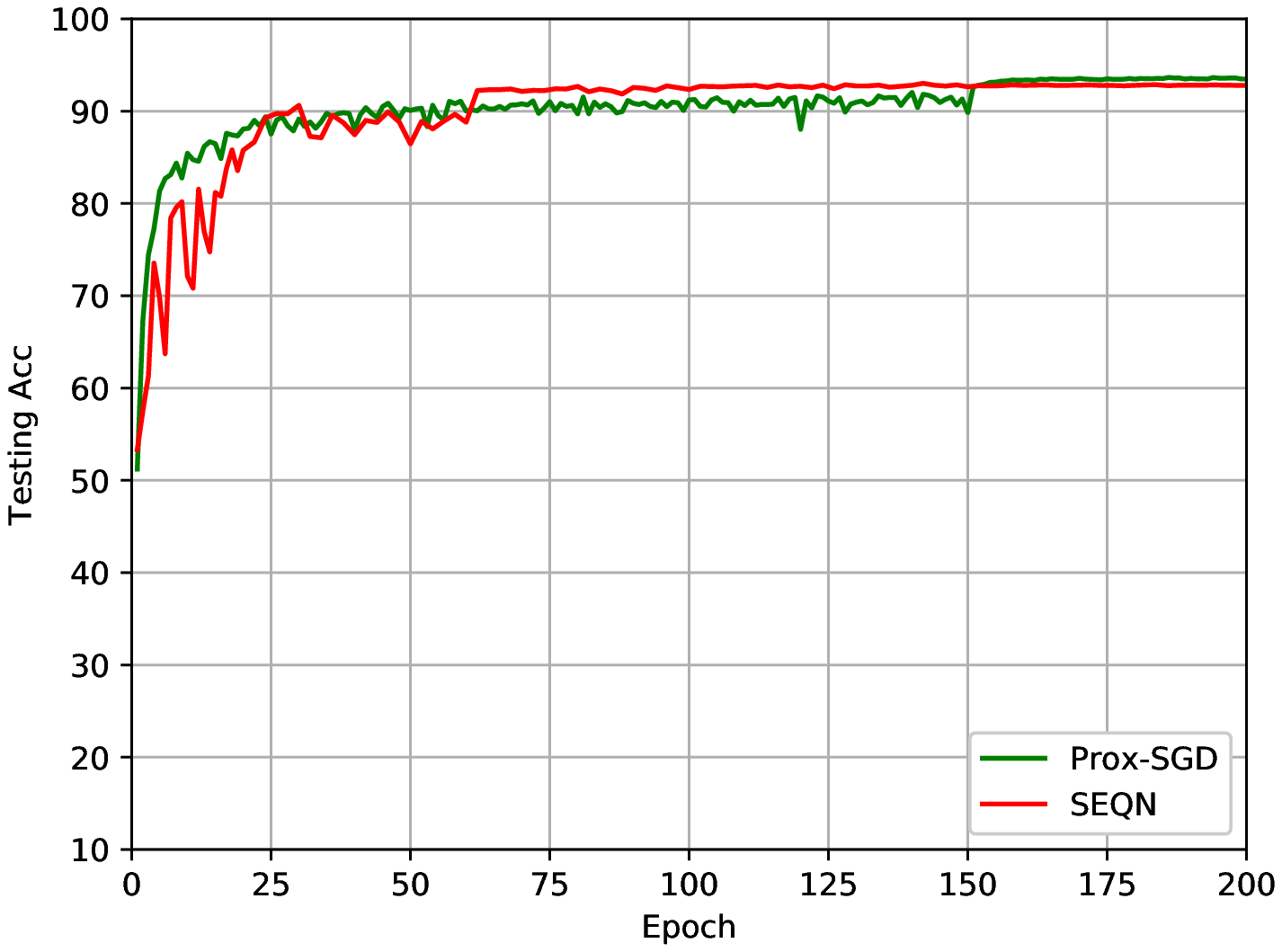}} \\
\caption{Numerical results for neural networks on the dataset $\mathtt{Cifar}$-$\mathtt{10}$.}
\label{test_neural}
\end{figure}

From the figure, it can be seen that SEQN performs comparable to Prox-SGD. In $\mathtt{ConvNet}$, SEQN appears to outperform Prox-SGD in terms of all three criteria. In the early stage, SEQN behaves similar to Prox-SGD, but it then jumps to a much better training and testing accuracy around the 25th epoch. In the final stage, SEQN attains a similar and good testing accuracy as Prox-SGD. The results not only illustrate faster convergence in the training process but also the good generalization of SEQN.
 
The results for $\mathtt{VGG}$-$\mathtt{16}$ and $\mathtt{ResNet}$-$\mathtt{18}$ are shown in the second and third column of Figure~\ref{test_neural}, respectively, and more detailed statistics are given in the two Tables \ref{vggnet-table}-\ref{Resnet-table}. In $\mathtt{VGG}$-$\mathtt{16}$, SEQN manages to decrease the training loss significantly -- compared to Prox-SGD -- but has a lower training and testing accuracy. However, the testing accuracy of SEQN reaches $91.23\%$ at the 50th epoch after $14$ minutes while the testing accuracy of Prox-SGD is still $89.37\%$ at the 150th epoch after approximately $41$ minutes. Although the best testing accuracy of SEQN is slightly worse than that of Prox-SGD, the favorable training loss and training/testing accuracy after a relatively low number of epochs is a clear advantage of SEQN. The performance on $\mathtt{ResNet}$-$\mathtt{18}$ is similar. The testing accuracy of SEQN reaches $92.14\%$ at the 70th epoch after $43$ minutes while the testing accuracy of Prox-SGD is still $92.03\%$ at the 140th epoch after $54$ minutes. We should point out that the computational costs of SEQN can be reduced if the implementation of the algebraic operations is further improved. 


\section{Conclusion} \label{sec:conclusion}
In this paper, we present a novel stochastic extra-step quasi-Newton method for solving nonsmooth nonconvex composite problems. We assume  that cheap stochastic approximation techniques are used to access objective function and gradient information. The proposed two-step scheme can be interpreted as a hybrid method combining higher order-type steps with an extra proximal gradient step. Our framework covers several existing algorithms and a large variety of stochastic approximations, such as mini-batch-type oracles or variance reduction techniques, can be utilized. We establish global convergence to stationary points in expectation and almost surely, if the step sizes are chosen appropriately to balance the variance of the stochastic gradients. The numerical experiments on sparse logistic regression strongly indicate the advantages of incorporating stochastic quasi-Newton directions. The proposed algorithm is also tested on challenging nonconvex deep learning problems. Our preliminary results demonstrate the potential of stochastic higher order information in sparse deep learning and a comparable performance to state-of-the-art stochastic first order methods is achieved.

\section{Appendix: Proofs of Auxiliary Results}


\subsection{Proof of Lemma \ref{lemma:point-diff}} \label{sec:app-A1}

\begin{proof}
As in \cite[Lemma 3.2]{DavDru18-2}, applying the definition and characterization \eqref{eq:prox-opt} of the proximal operator, it follows 
\begin{align*} \bar x = \proxt{\Theta}{\psi}(x)  \, 
& \iff \, \bar x \in x - \Theta^{-1}[\nabla f(\bar x) + \partial \vp(\bar x)] \\ 
& \iff \, \bar x\in \bar x - \Lambda_{+}^{-1} [ \nabla f(\bar x) + \Theta (\bar x - x)] - \Lambda_{+}^{-1} \partial \vp(\bar x) \\ 
& \iff \, \bar x = \proxt{\Lambda_{+}}{\vp}(\bar x - \Lambda_{+}^{-1}\nabla f(\bar x) - \Lambda_{+}^{-1}\Theta[\bar x - x]). \end{align*} 
Now, setting $z := x + \beta d$ and $q := \alpha d + \Lambda_{+}^{-1}(\nabla f(x) - \nabla f(z))$, we have $p = \Lambda_{+}^{-1}[\nabla f(\bar x) - \nabla f(x)] + q$ and $\|q\| \leq (\alpha + L_f \beta\lambda_{+}) \|d\| $. Furthermore, using Young's inequality, the nonexpansiveness of the proximity operator and the Lipschitz continuity of $\nabla f$, we obtain
\begingroup
\allowdisplaybreaks
\begin{align*} 
&\hspace{-2ex}\| p_+ - \bar x \|^2 \\
&=\|\proxt{\Lambda_{+}}{\vp}(\bar x - \Lambda_{+}^{-1}\nabla f(\bar x) - \Lambda_{+}^{-1}\Theta[\bar x - x])- \proxt{\Lambda_+}{\vp}(x+\alpha d - \Lambda^{-1}_+v_+)\|^2\\
&\leq \|(I - \Lambda_{+}^{-1}\Theta)[\bar x - x] - p + \Lambda_{+}^{-1}(v_+ - \nabla f(z))\|^2 \\ 
& = \|(I - \Lambda_{+}^{-1}\Theta)[\bar x - x] - \Lambda_{+}^{-1} [\nabla f(\bar x) - \nabla f(x)]\|^2 \\ 
& \hspace{2ex} - 2\iprod{(I-\Lambda_{+}^{-1}\Theta)[\bar x - x]}{q} - 2 \lambda_{+} \iprod{\nabla f(\bar x) - \nabla f(x)}{q} + \|q\|^2 \\
& \hspace{2ex} + 2 \lambda_{+} \iprod{(I-\Lambda_{+}^{-1}\Theta)[\bar x - x] -p}{v_+ - \nabla f(z)} + \lambda_{+}^2 \|\nabla f(z) - v_+\|^2 \\ 
&\leq \left[ (1+\rho_1)\left[1 - \frac{\lambda_{+}}{\theta}\right]^2 + 2\lambda_{+}\left[1 - \frac{\lambda_{+}}{\theta}\right]L_f +(1+\rho_2)L_f^2\lambda_{+}^2 \right] \|\bar x - x\|^2 \\ 
& \hspace{2ex} + \left[ 1+\frac{1}{\rho_{1}}+\frac{1}{\rho_{2}} \right]  \mu^2\|d\|^2+ \lambda_{+}^2 \|\nabla f(z) - v_+\|^2 \\
&\hspace{2ex}+2 \lambda_{+}  \iprod{(I-\Lambda_+^{-1}\Theta)[\bar x - x] -p}{v_+ - \nabla f(z)},
\end{align*}
\endgroup
where $\mu = \alpha + L_f \beta \lambda_+$. This establishes the statement in Lemma \ref{lemma:point-diff}.
\end{proof}

\subsection{Proof of Corollary  \ref{corollary:complexity}}
\begin{proof} We need to verify that the choice of $\lambda$ and $\lambda_+$ satisfies the constraints derived in Theorem \ref{theorem:complex} and in \eqref{eq:def-lamplus-app}, respectively. Notice that we can set $\bar \rho = 0$ and we can work with $(\lambda_+^m)^{-1}$ instead of $(2\lambda_+^m)^{-1}$ in \eqref{eq:def-lamplus-app}. Due to the definition of $b$ and $b_+$, we have $\tau_m  = \sqrt{2}$ for all $m$ and thus, it follows
\begin{align*} L_f - \frac{1}{\lambda_+} + \frac{K(K-1)}{2} \theta(\lambda_+) & \leq L - \frac{1}{\lambda_+} +  \frac{L^2 K^2}{2} \left[ \frac{1}{b_+} + \frac{1}{b} \right] \lambda_+  = 0. \end{align*}
Furthermore, with the choice $\lambda_+ = \gamma L^{-1}$ it holds that
 \begin{align*} \mathcal L_k^m &= \left[ \frac{(\alpha_k^m)^2}{\lambda_+} + 2L_f \alpha_k^m \beta_k^m + L_f^2 (\beta^m_k)^2 \lambda_+ + \frac{L^2(\beta_k^m)^2}{K} \lambda_+ \right] (\nu^m_k)^2 + \frac{1}{\lambda_+} \\ & \leq \left[\frac{1}{\gamma} + 2 + \gamma + \frac{\gamma}{K}\right] L  \bar \nu^2 + \frac{L}{\gamma} \leq \frac{L}{\gamma}(1+3\bar\nu^2) \end{align*}
and we have $\mathcal L^m_k \leq \frac{1}{\lambda}$ for all $k$ and $m$. Hence, the estimate \eqref{eq:cor-complex} directly follows from \eqref{eq:esti-for-complex}. 

Since each iteration in the inner loop of Algorithm \ref{alg:inex-qnm} requires $b+b_+$  gradient component evaluations (IFO), the total amount of evaluations in a single outer iteration is given by $N + K(b + b_+)$ and the corresponding total number of gradient component evaluations after $M$ outer iterations is
\[ MK \cdot \left[ \frac{N}{K} + b+b_+ \right] = MK \cdot \left [\frac{N}{K} + 2K^2 \right]. \] 
Moreover, since the bound for $\Exp[\|F^I({\sf X})\|^2]$ in \eqref{eq:cor-complex} is proportional to $(MK)^{-1}$, the IFO complexity of Algorithm \ref{alg:inex-qnm} for reaching an $\veps$-accurate stationary point with $\Exp[\|F^I({\sf X})\|^2] \leq \veps$ is $\mathcal O((N/K + 2K^2)/\veps)$. Minimizing this expression with respect to $K$ yields $K \sim N^{1/3}$ and the IFO complexity $\mathcal O(N^{2/3}/\veps)$. 
\end{proof}

\bibliographystyle{spmpsci}
\bibliography{ssn_ex_biblio}

\begin{thebibliography}{100}
\providecommand{\url}[1]{{#1}}
\providecommand{\urlprefix}{URL }
\expandafter\ifx\csname urlstyle\endcsname\relax
  \providecommand{\doi}[1]{DOI~\discretionary{}{}{}#1}\else
  \providecommand{\doi}{DOI~\discretionary{}{}{}\begingroup
  \urlstyle{rm}\Url}\fi

\bibitem{webliblinear}
{LIBLINEAR}: A library for large linear classification.
\newblock \url{http://www.csie.ntu.edu.tw/~cjlin/liblinear}

\bibitem{AgaBulHaz17}
Agarwal, N., Bullins, B., Hazan, E.: Second-order stochastic optimization for
  machine learning in linear time.
\newblock J. Mach. Learn. Res. \textbf{18}(116), 1--40 (2017)

\bibitem{AkiSuzFuk17}
Akiba, T., Suzuki, S., Fukuda, K.: Extremely large minibatch {SGD}: training
  {R}esnet-50 on {I}mage{N}et in 15 minutes (2017).
\newblock ArXiv:1711.04325

\bibitem{All17-Kat}
Allen-Zhu, Z.: Katyusha: The first direct acceleration of stochastic gradient
  methods.
\newblock In: Proceedings of the 49th Annual ACM SIGACT Symp. on Theory of
  Computing, pp. 1200--1205 (2017)

\bibitem{AllHaz16}
Allen-Zhu, Z., Hazan, E.: Variance reduction for faster non-convex
  optimization.
\newblock In: Proceedings of the 33rd Int. Conf. on Mach. Learn., pp. 699--707
  (2016)

\bibitem{AndGao07}
Andrew, G., Gao, J.: Scalable training of $\ell_1$-regularized log-linear
  models.
\newblock In: Proceedings of the 24th Int. Conf. on Mach. Learn., pp. 33--40
  (2007)

\bibitem{BacJenMaiObo11}
Bach, F., Jenatton, R., Mairal, J., Obozinski, G.: Optimization with
  sparsity-inducing penalties.
\newblock Found. and Trends{\textregistered} in Mach. Learn. \textbf{4}(1),
  1--106 (2011)

\bibitem{BauCom11}
Bauschke, H.H., Combettes, P.L.: Convex analysis and monotone operator theory
  in {H}ilbert spaces.
\newblock CMS Books in Mathematics/Ouvrages de Math\'ematiques de la SMC.
  Springer, New York (2011)

\bibitem{BerBolNoc17}
Berahas, A.S., Bollapragada, R., Nocedal, J.: An investigation of
  {N}ewton-sketch and subsampled {N}ewton methods (2017).
\newblock ArXiv:1705.06211

\bibitem{BerNocTak16}
Berahas, A.S., Nocedal, J., Tak{\'a}c, M.: A multi-batch {L-BFGS} method for
  machine learning.
\newblock In: Adv. in Neural Inf. Process. Syst., pp. 1055--1063 (2016)

\bibitem{Bis06}
Bishop, C.M.: Pattern recognition and machine learning.
\newblock Information Science and Statistics. Springer, New York (2006)

\bibitem{BolByrNoc18}
Bollapragada, R., Byrd, R., Nocedal, J.: Exact and inexact subsampled {N}ewton
  methods for optimization.
\newblock IMA J. Numer. Anal. pp. 1--34 (2018)

\bibitem{BotRitBar17}
Botev, A., Ritter, H., Barber, D.: Practical {Gauss-Newton} optimization for
  deep learning.
\newblock In: Proceedings of the 34th Int. Conf. on Mach. Learn., pp. 557--565
  (2017)

\bibitem{BotCurNoc18}
Bottou, L., Curtis, F.E., Nocedal, J.: Optimization methods for large-scale
  machine learning.
\newblock SIAM Review \textbf{60}(2), 223--311 (2018)

\bibitem{byrd2011use}
Byrd, R.H., Chin, G.M., Neveitt, W., Nocedal, J.: On the use of stochastic
  {Hessian} information in optimization methods for machine learning.
\newblock SIAM J. Optim. \textbf{21}(3), 977--995 (2011)

\bibitem{ByrHanNocSin16}
Byrd, R.H., Hansen, S.L., Nocedal, J., Singer, Y.: A stochastic quasi-{N}ewton
  method for large-scale optimization.
\newblock SIAM J. Optim. \textbf{26}(2), 1008--1031 (2016)

\bibitem{CanRec09}
Cand\`es, E.J., Recht, B.: Exact matrix completion via convex optimization.
\newblock Found. Comput. Math. \textbf{9}(6), 717--772 (2009)

\bibitem{ChaSanParWil09}
Chandrasekaran, V., Sanghavi, S., Parrilo, P.A., Willsky, A.S.: Sparse and
  low-rank matrix decompositions.
\newblock In: 27th Ann. Allerton Conf. on Comm., Control and Computing,
  vol.~42, pp. 1493--1498 (2009)

\bibitem{ChaLin11}
Chang, C.C., Lin, C.J.: {LIBSVM}: a library for support vector machines.
\newblock ACM Trans. on Intelligent Syst. and Technology (TIST) \textbf{2}(3),
  27 (2011)

\bibitem{CheQi94}
Chen, X., Qi, L.: A parameterized {N}ewton method and a quasi-{N}ewton method
  for nonsmooth equations.
\newblock Comput. Optim. Appl. \textbf{3}(2), 157--179 (1994)

\bibitem{ComPes11}
Combettes, P.L., Pesquet, J.C.: Proximal splitting methods in signal
  processing.
\newblock In: Fixed-point algorithms for inverse problems in science and
  engineering, \emph{Springer Optim. Appl.}, vol.~49, pp. 185--212. Springer,
  New York (2011)

\bibitem{ComWaj05}
Combettes, P.L., Wajs, V.R.: Signal recovery by proximal forward-backward
  splitting.
\newblock Multiscale Model. Simul. \textbf{4}(4), 1168--1200 (electronic)
  (2005)

\bibitem{ConGouToi00}
Conn, A.R., Gould, N.I.M., Toint, P.L.: Trust-region methods.
\newblock MPS/SIAM Series on Optim. SIAM; MPS, Philadelphia, PA (2000)

\bibitem{DavDru18-2}
Davis, D., Drusvyatskiy, D.: Stochastic model-based minimization of weakly
  convex functions (2018).
\newblock ArXiv:1803.06523

\bibitem{DavDru18-1}
Davis, D., Drusvyatskiy, D.: Stochastic subgradient method converges at the
  rate \textit{O}$(k^{-1/4})$ on weakly convex functions (2018).
\newblock ArXiv:1802.02988

\bibitem{DefBacSim14}
Defazio, A., Bach, F., Lacoste-Julien, S.: {SAGA}: A fast incremental gradient
  method with support for non-strongly convex composite objectives.
\newblock In: Adv. in Neural Inf. Process. Syst., pp. 1646--1654 (2014)

\bibitem{DenYu14}
Deng, L., Yu, D.: Deep learning: Methods and applications.
\newblock Found. and Trends{\textregistered} Signal Process. \textbf{7},
  197--387 (2014)

\bibitem{Don09}
Dong, Y.: An extension of {L}uque's growth condition.
\newblock Appl. Math. Lett. \textbf{22}(9), 1390--1393 (2009)

\bibitem{DruLew18}
Drusvyatskiy, D., Lewis, A.S.: Error bounds, quadratic growth, and linear
  convergence of proximal methods.
\newblock Math. Oper. Res. \textbf{43}(3), 919--948 (2018)

\bibitem{duchi2011adaptive}
Duchi, J., Hazan, E., Singer, Y.: Adaptive subgradient methods for online
  learning and stochastic optimization.
\newblock J. Mach. Learn. Res. \textbf{12}, 2121--2159 (2011)

\bibitem{ErdMon15}
Erdogdu, M.A., Montanari, A.: Convergence rates of sub-sampled {N}ewton
  methods.
\newblock In: Adv. in Neural Inf. Process. Syst., pp. 3034--3042 (2015)

\bibitem{REF08a}
Fan, R.E., Chang, K.W., Hsieh, C.J., Wang, X.R., Lin, C.J.: {LIBLINEAR}: A
  library for large linear classification.
\newblock J. Mach. Learn. Res. \textbf{9}, 1871--1874 (2008)

\bibitem{fang2018spider}
Fang, C., Li, C.J., Lin, Z., Zhang, T.: Spider: Near-optimal non-convex
  optimization via stochastic path-integrated differential estimator.
\newblock In: Adv. in Neural Inf. Process. Syst., pp. 689--699 (2018)

\bibitem{GhaLan13}
Ghadimi, S., Lan, G.: Stochastic first- and zeroth-order methods for nonconvex
  stochastic programming.
\newblock SIAM J. Optim. \textbf{23}(4), 2341--2368 (2013)

\bibitem{GhaLan16}
Ghadimi, S., Lan, G.: Accelerated gradient methods for nonconvex nonlinear and
  stochastic programming.
\newblock Math. Program. \textbf{156}(1-2, Ser. A), 59--99 (2016)

\bibitem{GhaLanZha16}
Ghadimi, S., Lan, G., Zhang, H.: Mini-batch stochastic approximation methods
  for nonconvex stochastic composite optimization.
\newblock Math. Program. \textbf{155}(1-2, Ser. A), 267--305 (2016)

\bibitem{gower2016stochastic}
Gower, R., Goldfarb, D., Richt{\'a}rik, P.: Stochastic block {BFGS}: Squeezing
  more curvature out of data.
\newblock In: Proceedings of the 33rd Int. Conf. on Mach. Learn., pp.
  1869--1878 (2016)

\bibitem{goyal2017accurate}
Goyal, P., Doll{\'a}r, P., Girshick, R., Noordhuis, P., Wesolowski, L., Kyrola,
  A., Tulloch, A., Jia, Y., He, K.: Accurate, large minibatch {SGD}: Training
  {I}mage{N}et in 1 hour (2017).
\newblock ArXiv:1706.02677

\bibitem{grosse2016kronecker}
Grosse, R., Martens, J.: A {K}ronecker-factored approximate {F}isher matrix for
  convolution layers.
\newblock In: Proceedings of the 33rd Int. Conf. on Mach. Learn., pp. 573--582
  (2016)

\bibitem{friedman2001elements}
Hastie, T., Tibshirani, R., Friedman, J.: The elements of statistical learning.
\newblock Springer Series in Statistics. Springer-Verlag, New York (2001).
\newblock Data mining, inference, and prediction

\bibitem{he2016deep}
He, K., Zhang, X., Ren, S., Sun, J.: Deep residual learning for image
  recognition.
\newblock In: IEEE Conf. on Comp. Vision and Pattern Recognition, pp. 770--778
  (2016)

\bibitem{hsieh2014quic}
Hsieh, C.J., Sustik, M.A., Dhillon, I.S., Ravikumar, P.: {QUIC}: quadratic
  approximation for sparse inverse covariance estimation.
\newblock J. Mach. Learn. Res. \textbf{15}(1), 2911--2947 (2014)

\bibitem{IusJofOliTho17}
Iusem, A.N., Jofr\'e, A., Oliveira, R.I., Thompson, P.: Extragradient method
  with variance reduction for stochastic variational inequalities.
\newblock SIAM J. Optim. \textbf{27}(2), 686--724 (2017)

\bibitem{JanKirSagWae16}
Janka, D., Kirches, C., Sager, S., W\"{a}chter, A.: An {SR}1/{BFGS} {SQP}
  algorithm for nonconvex nonlinear programs with block-diagonal {H}essian
  matrix.
\newblock Math. Program. Comput. \textbf{8}(4), 435--459 (2016)

\bibitem{JohZha13}
Johnson, R., Zhang, T.: Accelerating stochastic gradient descent using
  predictive variance reduction.
\newblock In: Adv. in Neural Inf. Process. Syst., pp. 315--323 (2013)

\bibitem{kingma2014adam}
Kingma, D.P., Ba, J.: Adam: A method for stochastic optimization.
\newblock ArXiv:1412.6980  (2014)

\bibitem{KohLuc17}
Kohler, J.M., Lucchi, A.: Sub-sampled cubic regularization for non-convex
  optimization.
\newblock In: Proceedings of the 34th Int. Conf. on Mach. Learn., vol.~70, pp.
  1895--1904 (2017)

\bibitem{konevcny2016mini}
Kone{\v{c}}n{\`y}, J., Liu, J., Richt{\'a}rik, P., Tak{\'a}{\v{c}}, M.:
  Mini-batch semi-stochastic gradient descent in the proximal setting.
\newblock IEEE J. of Sel. Topics in Signal Process. \textbf{10}(2), 242--255
  (2016)

\bibitem{lecun2015deep}
LeCun, Y., Bengio, Y., Hinton, G.: Deep learning.
\newblock Nature \textbf{521}(7553), 436 (2015)

\bibitem{LeeSunSau14}
Lee, J.D., Sun, Y., Saunders, M.A.: Proximal {N}ewton-type methods for
  minimizing composite functions.
\newblock SIAM J. Optim. \textbf{24}(3), 1420--1443 (2014)

\bibitem{lei2017non}
Lei, L., Ju, C., Chen, J., Jordan, M.I.: Non-convex finite-sum optimization via
  {SCSG} methods.
\newblock In: Adv. in Neural Inf. Process. Syst., pp. 2348--2358 (2017)

\bibitem{LMH2015}
Lin, H., Mairal, J., Harchaoui, Z.: A universal catalyst for first-order
  optimization.
\newblock In: Adv. in Neural Inf. Process. Syst., pp. 3384--3392 (2015)

\bibitem{LiuNoc89}
Liu, D.C., Nocedal, J.: On the limited memory {BFGS} method for large scale
  optimization.
\newblock Math. Program. \textbf{45}(3, (Ser. B)), 503--528 (1989)

\bibitem{LiuSoWu18}
Liu, H., So, A.M.C., Wu, W.: Quadratic optimization with orthogonality
  constraint: explicit {{\L}}ojasiewicz exponent and linear convergence of
  retraction-based line-search and stochastic variance-reduced gradient
  methods.
\newblock Math. Program.  (2018)

\bibitem{liu2018fast}
Liu, X., Hsieh, C.J.: Fast variance reduction method with stochastic batch
  size.
\newblock In: Proceedings of the 35th Int. Conf. on Mach. Learn., pp.
  3185--3194 (2018)

\bibitem{LuoTse93}
Luo, Z.Q., Tseng, P.: Error bounds and convergence analysis of feasible descent
  methods: a general approach.
\newblock Ann. Oper. Res. \textbf{46/47}(1-4), 157--178 (1993)

\bibitem{mairal2009online}
Mairal, J., Bach, F., Ponce, J., Sapiro, G.: Online dictionary learning for
  sparse coding.
\newblock In: Proceedings of the 26th Int. Conf. on Mach. Learn., pp. 689--696
  (2009)

\bibitem{ManRun18}
Mannel, F., Rund, A.: A hybrid semismooth quasi-{N}ewton method for structured
  nonsmooth operator equations in {B}anach spaces (2018).
\newblock {h}ttps://imsc.uni-graz.at/mannel/sqn1.pdf

\bibitem{martens2010deep}
Martens, J.: Deep learning via {Hessian}-free optimization.
\newblock In: Proceedings of the 27th Int. Conf. on Mach. Learn., vol.~27, pp.
  735--742 (2010)

\bibitem{martens2015optimizing}
Martens, J., Grosse, R.: Optimizing neural networks with {K}ronecker-factored
  approximate curvature.
\newblock In: Proceedings of the 32nd Int. Conf. on Mach. Learn., pp.
  2408--2417 (2015)

\bibitem{MasBaxBarFre99}
Mason, L., Baxter, J., Bartlett, P., Frean, M.: Boosting algorithms as gradient
  descent in function space.
\newblock In: Proceedings of the 12th Int. Conf. on Neural Inf. Process. Syst.,
  pp. 512--518 (1999)

\bibitem{MilXiaCenWenUlb18}
Milzarek, A., Xiao, X., Cen, S., Wen, Z., Ulbrich, M.: A stochastic semismooth
  {N}ewton method for nonsmooth nonconvex optimization (2018).
\newblock ArXiv:1803.03466

\bibitem{mokhtari2018iqn}
Mokhtari, A., Eisen, M., Ribeiro, A.: {IQN}: An incremental quasi-{N}ewton
  method with local superlinear convergence rate.
\newblock SIAM J. Optim. \textbf{28}(2), 1670--1698 (2018)

\bibitem{MokRib14}
Mokhtari, A., Ribeiro, A.: R{ES}: regularized stochastic {BFGS} algorithm.
\newblock IEEE Trans. Signal Process. \textbf{62}(23), 6089--6104 (2014)

\bibitem{MokRib15}
Mokhtari, A., Ribeiro, A.: Global convergence of online limited memory {BFGS}.
\newblock J. Mach. Learn. Res. \textbf{16}, 3151--3181 (2015)

\bibitem{Mor65}
Moreau, J.J.: Proximit\'e et dualit\'e dans un espace hilbertien.
\newblock Bull. Soc. Math. FR. \textbf{93}, 273--299 (1965)

\bibitem{moritz2016linearly}
Moritz, P., Nishihara, R., Jordan, M.: A linearly-convergent stochastic
  {L-BFGS} algorithm.
\newblock In: Proceedings of the 19th Conf. on Artif. Intelligence and Stat.,
  pp. 249--258 (2016)

\bibitem{Mut16}
Mutn\'{y}, M.: Stochastic second-order optimization via {N}eumann series
  (2016).
\newblock ArXiv:1612.04694

\bibitem{Nes13}
Nesterov, Y.: Gradient methods for minimizing composite functions.
\newblock Math. Program. \textbf{140}(1, Ser. B), 125--161 (2013)

\bibitem{nguyen2019finite}
Nguyen, L.M., van Dijk, M., Phan, D.T., Nguyen, P.H., Weng, T.W., Kalagnanam,
  J.R.: Finite-sum smooth optimization with {SARAH} (2019).
\newblock ArXiv:1901.07648v2

\bibitem{nguyen2017sarah}
Nguyen, L.M., Liu, J., Scheinberg, K., Tak{\'a}{\v{c}}, M.: {SARAH}: A novel
  method for machine learning problems using stochastic recursive gradient.
\newblock In: Proceedings of the 34th Int. Conf. on Mach. Learn., pp.
  2613--2621 (2017)

\bibitem{NguPauRicSut18}
Nguyen, T.P., Pauwels, E., Richard, E., Suter, B.W.: Extragradient method in
  optimization: convergence and complexity.
\newblock J. Optim. Theory Appl. \textbf{176}(1), 137--162 (2018)

\bibitem{Noc80}
Nocedal, J.: Updating quasi-{N}ewton matrices with limited storage.
\newblock Math. Comp. \textbf{35}(151), 773--782 (1980)

\bibitem{osawa2018second}
Osawa, K., Tsuji, Y., Ueno, Y., Naruse, A., Yokota, R., Matsuoka, S.:
  Large-scale distributed second-order optimization using {K}ronecker-factored
  approximate curvature for deep convolutional neural networks (2018).
\newblock ArXiv:1811.12019

\bibitem{PanQi93}
Pang, J.S., Qi, L.: Nonsmooth equations: motivation and algorithms.
\newblock SIAM J. Optim. \textbf{3}(3), 443--465 (1993)

\bibitem{ParBoy14}
Parikh, N., Boyd, S.: Proximal algorithms.
\newblock Found. and Trends{\textregistered} in Optim. \textbf{1}(3), 127--239
  (2014)

\bibitem{paszke2017automatic}
Paszke, A., Gross, S., Chintala, S., Chanan, G., Yang, E., DeVito, Z., Lin, Z.,
  Desmaison, A., Antiga, L., Lerer, A.: Automatic differentiation in pytorch
  (2017)

\bibitem{PatSteBem14}
Patrinos, P., Stella, L., Bemporad, A.: Forward-backward truncated {N}ewton
  methods for convex composite optimization (2014).
\newblock ArXiv:1402.6655

\bibitem{pham2019proxsarah}
Pham, N.H., Nguyen, L.M., Phan, D.T., Tran-Dinh, Q.: {ProxSARAH}: An efficient
  algorithmic framework for stochastic composite nonconvex optimization.
\newblock ArXiv:1902.05679  (2019)

\bibitem{pilanci2017newton}
Pilanci, M., Wainwright, M.J.: {N}ewton sketch: A near linear-time optimization
  algorithm with linear-quadratic convergence.
\newblock SIAM J. Optim. \textbf{27}(1), 205--245 (2017)

\bibitem{PooLiaSch18}
Poon, C., Liang, J., Schoenlieb, C.: Local convergence properties of
  {SAGA}/{P}rox-{SVRG} and acceleration.
\newblock In: Proceedings of the 35th Int. Conf. on Mach. Learn., vol.~80, pp.
  4124--4132 (2018)

\bibitem{Qi93}
Qi, L.: Convergence analysis of some algorithms for solving nonsmooth
  equations.
\newblock Math. Oper. Res. \textbf{18}(1), 227--244 (1993)

\bibitem{Qi97}
Qi, L.: On superlinear convergence of quasi-{N}ewton methods for nonsmooth
  equations.
\newblock Oper. Res. Lett. \textbf{20}(5), 223--228 (1997)

\bibitem{QiSun93}
Qi, L., Sun, J.: A nonsmooth version of {N}ewton's method.
\newblock Math. Program. \textbf{58}(3, Ser. A), 353--367 (1993)

\bibitem{RedHefSraPocSmo16}
Reddi, S.J., Hefny, A., Sra, S., P\'{o}czos, B., Smola, A.J.: Stochastic
  variance reduction for nonconvex optimization.
\newblock In: Proceedings of the 33th Int. Conf. on Mach. Learn., pp. 314--323
  (2016)

\bibitem{RedSraPocSmo16}
Reddi, S.J., Sra, S., P\'{o}czos, B., Smola, A.J.: Proximal stochastic methods
  for nonsmooth nonconvex finite-sum optimization.
\newblock In: Adv. in Neural Inf. Process. Syst., pp. 1145--1153 (2016)

\bibitem{RobMon51}
Robbins, H., Monro, S.: A stochastic approximation method.
\newblock Ann. Math. Stat. \textbf{22}, 400--407 (1951)

\bibitem{RobSie71}
Robbins, H., Siegmund, D.: A convergence theorem for non negative almost
  supermartingales and some applications.
\newblock Optimizing Methods in Statistics ({O}hio {S}tate {U}niv., 1971) pp.
  233--257 (1971)

\bibitem{RodKro16}
Rodomanov, A., Kropotov, D.: A superlinearly-convergent proximal {N}ewton-type
  method for the optimization of finite sums.
\newblock In: Proc. of the 33rd Int. Conf. on Mach. Learn., pp. 2597--2605
  (2016)

\bibitem{RooMah18}
Roosta-Khorasani, F., Mahoney, M.W.: Sub-sampled {N}ewton methods.
\newblock Math. Program. pp. 1--34 (2018)

\bibitem{Sch15}
Schmidhuber, J.: Deep learning in neural networks: An overview.
\newblock Neural Netw. \textbf{61}, 85 -- 117 (2015)

\bibitem{SchLeRBac17}
Schmidt, M., Le~Roux, N., Bach, F.: Minimizing finite sums with the stochastic
  average gradient.
\newblock Math. Program. \textbf{162}(1-2, Ser. A), 83--112 (2017)

\bibitem{schraudolph2007stochastic}
Schraudolph, N.N., Yu, J., G{\"u}nter, S.: A stochastic quasi-{N}ewton method
  for online convex optimization.
\newblock In: Proceedings of the 11th Int. Conf. on Artif. Intelligence and
  Stat., pp. 436--443 (2007)

\bibitem{shalev2014understanding}
Shalev-Shwartz, S., Ben-David, S.: Understanding machine learning: From theory
  to algorithms.
\newblock Cambridge University Press (2014)

\bibitem{SST2011}
Shalev-Shwartz, S., Tewari, A.: Stochastic methods for {$\ell_1$}-regularized
  loss minimization.
\newblock J. Mach. Learn. Res. \textbf{12}, 1865--1892 (2011)

\bibitem{SSZ2013}
Shalev-Shwartz, S., Zhang, T.: Stochastic dual coordinate ascent methods for
  regularized loss minimization.
\newblock J. Mach. Learn. Res. \textbf{14}, 567--599 (2013)

\bibitem{SSZ2016}
Shalev-Shwartz, S., Zhang, T.: Accelerated proximal stochastic dual coordinate
  ascent for regularized loss minimization.
\newblock Math. Program. \textbf{155}(1-2, Ser. A), 105--145 (2016)

\bibitem{shi2010fast}
Shi, J., Yin, W., Osher, S., Sajda, P.: A fast hybrid algorithm for large-scale
  $\ell_1$-regularized logistic regression.
\newblock J. Mach. Learn. Res. \textbf{11}, 713--741 (2010)

\bibitem{ShiLiu15}
Shi, Z., Liu, R.: Large scale optimization with proximal stochastic
  {N}ewton-type gradient descent.
\newblock In: Mach. Learn. and Knowl. Disc. in Databases, vol. 9284, pp.
  691--704. Springer Int. Pub. (2015)

\bibitem{simonyan2014very}
Simonyan, K., Zisserman, A.: Very deep convolutional networks for large-scale
  image recognition.
\newblock ArXiv:1409.1556  (2014)

\bibitem{SteThePat17}
Stella, L., Themelis, A., Patrinos, P.: Forward--backward quasi-{N}ewton
  methods for nonsmooth optimization problems.
\newblock Comput. Optim. Appl. \textbf{67}(3), 443--487 (2017)

\bibitem{SunHan97}
Sun, D., Han, J.: Newton and quasi-{N}ewton methods for a class of nonsmooth
  equations and related problems.
\newblock SIAM J. Optim. \textbf{7}(2), 463--480 (1997)

\bibitem{sutskever2013importance}
Sutskever, I., Martens, J., Dahl, G., Hinton, G.: On the importance of
  initialization and momentum in deep learning.
\newblock In: Proceedings of the 30th Int. Conf. on Mach. Learn., pp.
  1139--1147 (2013)

\bibitem{TheStePat18}
Themelis, A., Stella, L., Patrinos, P.: Forward-backward envelope for the sum
  of two nonconvex functions: further properties and nonmonotone linesearch
  algorithms.
\newblock SIAM J. Optim. \textbf{28}(3), 2274--2303 (2018)

\bibitem{vapnik2013nature}
Vapnik, V.: The nature of statistical learning theory.
\newblock Springer Science \& Business Media (2013)

\bibitem{WanZha19}
Wang, J., Zhang, T.: Utilizing second order information in minibatch stochastic
  variance reduced proximal iterations.
\newblock J. Mach. Learn. Res. \textbf{20}(42), 1--56 (2019)

\bibitem{WanMaLi11}
Wang, X., Ma, C., Li, M.: A globally and superlinearly convergent
  quasi-{N}ewton method for general box constrained variational inequalities
  without smoothing approximation.
\newblock J. Global Optim. \textbf{50}(4), 675--694 (2011)

\bibitem{WanMaGolLiu17}
Wang, X., Ma, S., Goldfarb, D., Liu, W.: Stochastic {Q}uasi-{N}ewton {M}ethods
  for {N}onconvex {S}tochastic {O}ptimization.
\newblock SIAM J. Optim. \textbf{27}(2), 927--956 (2017)

\bibitem{WanWanYua18}
Wang, X., Wang, X., Yuan, Y.x.: Stochastic proximal quasi-{N}ewton methods for
  non-convex composite optimization.
\newblock Optim. Methods Softw. pp. 922--948 (2019)

\bibitem{wang2018spiderboost}
Wang, Z., Ji, K., Zhou, Y., Liang, Y., Tarokh, V.: Spiderboost: A class of
  faster variance-reduced algorithms for nonconvex optimization.
\newblock ArXiv:1810.10690  (2018)

\bibitem{WenYinGolZha10}
Wen, Z., Yin, W., Goldfarb, D., Zhang, Y.: A fast algorithm for sparse
  reconstruction based on shrinkage, subspace optimization, and continuation.
\newblock SIAM J. Sci. Comput. \textbf{32}(4), 1832--1857 (2010)

\bibitem{XiaZha14}
Xiao, L., Zhang, T.: A proximal stochastic gradient method with progressive
  variance reduction.
\newblock SIAM J. Optim. \textbf{24}(4), 2057--2075 (2014)

\bibitem{XiaLiWenZha18}
Xiao, X., Li, Y., Wen, Z., Zhang, L.: A regularized semi-smooth {N}ewton method
  with projection steps for composite convex programs.
\newblock J. Sci. Comput. \textbf{76}(1), 364--389 (2018)

\bibitem{XuRooMah19}
Xu, P., Roosta, F., Mahoney, M.W.: Newton-type methods for non-convex
  optimization under inexact {H}essian information.
\newblock Math. Program.  (2019)

\bibitem{xu2016sub}
Xu, P., Yang, J., Roosta-Khorasani, F., R{\'e}, C., Mahoney, M.W.: Sub-sampled
  {N}ewton methods with non-uniform sampling.
\newblock In: Adv. in Neural Inf. Process. Syst., pp. 3000--3008 (2016)

\bibitem{XuYin15}
Xu, Y., Yin, W.: Block stochastic gradient iteration for convex and nonconvex
  optimization.
\newblock SIAM J. Optim. \textbf{25}(3), 1686--1716 (2015)

\bibitem{YeLuoZha17}
Ye, H., Luo, L., Zhang, Z.: Approximate {N}ewton methods and their local
  convergence.
\newblock In: Proceedings of the 34th Int. Conf. on Mach. Learn., vol.~70, pp.
  3931--3939 (2017)

\bibitem{you2018imagenet}
You, Y., Zhang, Z., Hsieh, C.J., Demmel, J., Keutzer, K.: Image{N}et training
  in minutes.
\newblock In: Proceedings of the 47th Int. Conf. on Parallel Process., pp.
  1--10 (2018)

\bibitem{yuan2012improved}
Yuan, G.X., Ho, C.H., Lin, C.J.: An improved {GLMNET} for $\ell_1$-regularized
  logistic regression.
\newblock J. Mach. Learn. Res. \textbf{13}, 1999--2030 (2012)

\bibitem{ZhaRedSra16}
Zhang, H., Reddi, S.J., Sra, S.: Riemannian {SVRG}: fast stochastic
  optimization on {R}iemannian manifolds.
\newblock In: Adv. in Neural Inf. Process. Syst., pp. 4592--4600 (2016)

\bibitem{zhao2017stochastic}
Zhao, R., Haskell, W.B., Tan, V.Y.: Stochastic {L-BFGS}: Improved convergence
  rates and practical acceleration strategies.
\newblock IEEE Trans. on Signal Process.  (2017)

\bibitem{zhou2018stochastic}
Zhou, D., Xu, P., Gu, Q.: Stochastic nested variance reduction for nonconvex
  optimization.
\newblock In: Adv. in Neural Inf. Process. Syst., pp. 3925--3936 (2018)

\end{thebibliography}

\end{document}